
\documentclass[12pt]{article}
\usepackage{amssymb,amsmath,amsthm,mathrsfs}
\usepackage{float}
\usepackage{multirow}
\usepackage{url}
\usepackage{bm}
\usepackage{graphicx}
\usepackage{algorithm,algorithmic,subfigure}
\usepackage{subeqnarray}
\usepackage{cases}
\usepackage{authblk}
\usepackage{blindtext}
\usepackage{xcolor}
\usepackage{cleveref}

\newcommand{\st}{{\textrm{s.t.}}}

\newcommand{\la}{{\langle}}
\newcommand{\ra}{{\rangle}}
\newcommand{\Acal}{{\mathcal{A}}}

\newcommand{\Ical}{{\mathcal{I}}}

\newcommand{\Scal}{{\mathcal{S}}}
\newcommand{\Ucal}{{\mathcal{U}}}

\newcommand{\bx}{\bm{x}}

\newcommand{\bw}{\bm{w}}
\newcommand{\beps}{\bm{\epsilon}}
\newcommand{\Nb}{\mathbb{N}}

\newtheorem{theorem}{Theorem}[section]
\newtheorem{corollary}[theorem]{Corollary}
\newtheorem{lemma}[theorem]{Lemma}

\newtheorem{proposition}[theorem]{Proposition}
 \theoremstyle{definition}
\newtheorem{definition}[theorem]{Definition}
\theoremstyle{remark}

\newtheorem{example}{Example}[section]
\providecommand{\keywords}[1]
{
	\small
	\textbf{\textit{Keywords---}} #1
}

\begin{document}
\date{}

\title{Towards an efficient approach for the nonconvex $\ell_p$-ball projection: algorithm and analysis}

\author[1,2]{Xiangyu~Yang}
\author[3]{Jiashan~Wang}
\author[1]{Hao~Wang*}
\affil[1]{School of Information Science and Technology, ShanghaiTech University, Shanghai 201210, China}
\affil[2]{Center for Applied Mathematics of Henan Province, Henan University, Zhengzhou, 450046, China}
\affil[3]{Department of Mathematics, University of Washington, Seattle, USA}
\affil[1,2]{\textit {\{yangxy3,wanghao1\}@shanghaitech.edu.cn}}
\affil[3]{\textit {jsw1119@gmail.com}}
\maketitle

\begin{abstract}
This paper primarily focuses on computing the  Euclidean projection of a vector onto the $\ell_{p}$ ball in which $p\in(0,1)$. Such a problem emerges as the core building block in statistical machine learning and signal processing tasks because of its ability to promote the sparsity of the desired solution. However, efficient numerical algorithms for finding the projections are still not available, particularly in large-scale optimization. To meet this challenge, we first derive the first-order necessary optimality conditions of this problem. Based on this characterization, we develop a novel numerical approach for computing the stationary point by solving a sequence of projections onto the reweighted $\ell_{1}$-balls. This method is practically simple to implement and computationally efficient. Moreover, the proposed algorithm is shown to converge uniquely under mild conditions and has a worst-case $O(1/\sqrt{k})$ convergence rate. Numerical experiments demonstrate the efficiency of our proposed algorithm. 
\end{abstract}


\keywords{Nonsmooth optimization, Sparse regularization, Nonconvex $\ell_{p}$ ball projection, Weighted $\ell_1$ ball projection, Iterative reweighting methods}

\section{Introduction}\label{sec_Intro}
In this paper, we are principally interested in computing the Euclidean projections of a point onto the $\ell_{p}$ ball with $p \in (0,1)$, which can be formulated as the following optimization problem
\begin{equation}\label{projection.lp}
	\min\limits_{\bm{x}\in\mathbb{R}^{n}}~ \frac{1}{2}\|\bm{x}-\bm{y}\|_2^2 \quad 
	\textrm{s.t.}~ \Vert\bm{x}\Vert_{p}^{p} \leq \gamma,
\end{equation}
where $\bm{y}\in\mathbb{R}^{n}$ is given,   $\Vert\bm{x}\Vert_{p}^{p} := \sum_{i=1}^{n}\vert x_i\vert^{p}$ and $\gamma > 0$ is a prescribed scalar and is  referred  to as  the radius of $\ell_{p}$ ball. Throughout this paper, $\bm{y}$ is assumed to be outside the $\ell_{p}$ ball, that is, $\|\bm{y}\|_{p}^{p} > \gamma$; otherwise the computation of such a projection is trivial. Notice that~\eqref{projection.lp} may have multiple minimizers. 

Problems of the form \eqref{projection.lp} arise in inverse problems with a broad spectrum of applications, including image denoising in signal processing, computer vision, and statistical machine learning. In general, the imposed $\ell_{p}$ ball constraint can significantly promote sparsity,  reduce system complexity, and improve the generalization performance \cite{jain2017non,liang2013feature}. We recall several motivating applications with \eqref{projection.lp} in the following.
\begin{itemize}
	\item[1.] \textit{The $\ell_{p}$-constrained sparse coding:} In the context of sparse coding \cite{Thom2015Efficient}, the task is to reconstruct the unknown sparse code word $\bm{x}^*\in\mathbb{R}^{n}$ from the linear measurements $\bm{y} = \bm{A}\bm{x}^* + \bm{\xi}$, where  $\bm{y}\in\mathbb{R}^{m}$ represents the data with $m$ features,  $\bm{\xi}\in\mathbb{R}^{m}$ denotes the noise vector, and  $\bm{A}$ corresponds to the fixed dictionary that consists of $n$ atoms with respect to its columns. This problem can be formulated as 
	\begin{equation}\label{eq: lp_regression}
		\min_{\bm{x} \in \mathbb{R}^n}~  \Vert\bm{A}\bm{x}-\bm{y}\Vert_{2}^{2}  \quad
		\textrm{s.t.}~  \Vert\bm{x}\Vert_{p}^{p} \leq \gamma,
	\end{equation}
	where the $\ell_{p}$ ball constraint is to induce sparsity in the code word. To solve  \eqref{eq: lp_regression},  the projected gradient descent (PGD) method is often employed \cite{bahmani2013unifying}. The efficiency of such an algorithm heavily relies  on the computation of projections onto the $\ell_p$ ball. 
	
	\item[2.] \textit{Projection-based transform domain denoising:} In standard wavelet denoising \cite{cetin2015projection, grandits2017optimizing}, it could be used as a prior information that  the unknown signal $\bm{x}$ is sparse \cite{chang2000adaptive}. Suppose $\bm{y}$ represents the wavelet signals   and the radius $\gamma$ is predetermined,  problem \eqref{projection.lp} is usually a pivotal procedure to be solved.
	
	\item[3.] \textit{Adversarial attacks for deep learning:}  Adversarial attacks have become a critical task for designing robust neural networks, which considers the security issues of deep neural networks \cite{yuan2019adversarial}. Among many advances in generating adversarial examples, a line of works considers the bounded norm-constraints of the perturbation \cite{su2019one}, which endeavors to operate a few entries merely. In particular, a central step is to compute the projection of the perturbation vector onto a $\ell_{p}$ ball in the black-box attacks setting \cite{balda2020adversarial}.
	
\end{itemize}

Despite its potential wide applicability,  the nonsmooth and nonconvex nature of \eqref{projection.lp} makes it challenging to solve in general. In principle, it is non-trivial to derive optimality conditions to characterize the optimal solutions of \eqref{projection.lp}. This introduces difficulties in numerical algorithm design and the analysis of the convergence properties of the proposed algorithms. 

Due to these aforementioned difficulties, to the extent of our knowledge, not much has been achieved for solving \eqref{projection.lp}. A pioneer study \cite{das2013non} proposed an exhaustive search algorithm, which combines branching and root-finding techniques. By focusing on the local optimality with respect to the nonzero components of the iterates for each branch, they obtained the optimal solution via finding the roots of a univariate nonlinear equation. It is claimed that the proposed algorithm could find a globally optimal solution for most $p$ with high probability. A recent study \cite{chen2019outlier} derived a unified characterization of the optimal solution and applied the Lagrangian method to improve  \cite{das2013non}, where a bisection method is used for finding the optimal multiplier; for each multiplier, this method needs to solve a system of nonlinear equations. By postulating that~\eqref{projection.lp} is addressed by a practical algorithm, the authors in \cite{bahmani2013unifying} theoretically investigated the performance of the PGD method in solving the $\ell_{p}$-constrained least squares problems. At this point, the authors of \cite{bahmani2013unifying} commented, \emph{``We believe developing an efficient algorithm for projection onto $\ell_{p}$ balls with $p\in(0,1)$ is an interesting problem that can provide a building block for other methods of sparse signal estimation involving the $\ell_{p}$-norm.   Furthermore, studying this problem may help to find an insight on how the complexity of these algorithms vary in terms of p.}"

The typical algorithmic frameworks for solving the nonconvex and nonsmooth optimization problems include the majorization-minimization (MM) framework \cite{lange2016mm} and the difference-of-convex (DC) framework \cite{le2018dc}. For the MM framework, a great variety of works considered the nonconvex $\ell_{p}$-penalized minimization problems and addressed it by solving a sequence of reweighted $\ell_{1}$-penalized subproblems \cite{figueiredo2007majorization,chartrand2008iteratively}. Our proposed algorithm also falls into the MM algorithmic framework since it generates the sequence by iteratively minimizing a sequence of weighted $\ell_{1}$-ball projection problems in which the constructed weighted $\ell_{1}$ ball is contained in the $\ell_{p}$ ball. It should be highlighted that the $\ell_{p}$ ball projection problem generalizes the $\ell_{p}$-penalized problem, as evidenced by \cite{das2013non}. On the other hand, the convergence results of the MM framework for constrained optimization investigated in \cite{bolte2016majorization} are not directly applicable to our algorithm due to the semi-algebraic constraint set assumption.  Another line of works investigated the DC programming framework for the sparse constrained optimization problems \cite{boob2020feasible}, but their surrogate functions for the $\ell_{p}$ ball constraint are different from ours. We would also like to mention that the updating strategy for the perturbation in our algorithm is novel and is crucial to the convergence analysis.

It is worthy to point out that fast convergence and low complexity properties are usually required when performing $\ell_{p}$ ball projections, since $\eqref{projection.lp}$ usually is a subroutine that is embedded in various algorithmic frameworks. The existing Lagrangian based methods mentioned above generally can not guarantee the feasibility of the obtained solution, and its effectiveness is only  demonstrated empirically. On the other hand, the performance of root-finding within Lagrangian based methods degrades in a large-scale setting. To overcome these challenges, we develop the first-order necessary optimality conditions to characterize the solutions of \eqref{projection.lp},  design numerical approaches for solving \eqref{projection.lp} and analyze the global convergence and convergence rate. The core idea of our work is based on the observation that the $\ell_p$ ball is locally equivalent to a weighted $\ell_1$ ball.  Therefore, our approach solves  \eqref{projection.lp} via a sequence of projections onto weighted $\ell_{1}$-balls, which are constructed by first adding perturbation to smooth the $\ell_p$ norm and then linearizing the $\ell_{p}$ norm at current iterate. Notably, the weighted $\ell_1$ ball projection subproblem can be solved accurately by finite terminating algorithms \cite{perez2020efficient} with $O(n)$ complexity, and these make our methods suitable for large-scale problems. Comparison with the existing state-of-the-art method highlights the advantage of the proposed algorithm which improves the  CPU time by at least $2$ orders of magnitude. Moreover, we develop a novel updating strategy for the perturbation parameters such that the algorithm is guaranteed to converge to a first-order stationary solution. The optimality errors are shown to converge with the rate of $O(1/\sqrt{k})$.

\subsection{Organization} In the remainder of this section, we outline our notation that will be employed throughout the paper.  First-order optimality conditions are derived in section \ref{sec2}. The proposed reweighted $\ell_{1}$-ball projection algorithm is detailed in section \ref{Sec: algorithm}. The global convergence and complexity are analyzed in section \ref{sec: analysis}. The results of numerical experiments are presented in section \ref{sec: exprements}. Concluding remarks are provided in section \ref{sec: Conclusion}.

\subsection{Notation and Preliminaries}
Throughout this paper, we limit our discussion to $\mathbb{R}^{n}$ with the standard inner product $\langle\bm{x},\bm{y}\rangle = \bm{x}^{T}\bm{y} = \sum_{i=1}^{n}x_iy_i$, and all boldface lower case letters represent vectors. In particular, $\bm{0}$ denotes the zero vector of appropriate size. Let $\mathbb{R}_{+}^{n}$ be the nonnegative orthant of $\mathbb{R}^{n}$, and $\mathbb{R}_{++}^{n}$ denote the interior of $\mathbb{R}_{+}^{n}$. Let $\mathbb{N} = \{1, 2, ... \}$ be the set of natural numbers.  For any $n\in \mathbb{N}$, we use $\left[ n\right]$ to represent the set of integers $\{1,2,\ldots,n\}$. 
We use $\|\cdot\|_1$ and  $\|\cdot\|_{2}$ to represent the $\ell_{1}$ norm and the $\ell_{2}$ norm of any vector $\bm{x}\in\mathbb{R}^{n}$, respectively. Define $|\bm{x}| = (|x_1|,\ldots,|x_{n}|)^{T}$, $\textrm{sign}(\bm{x}) = (\textrm{sign}(x_1), \ldots, \textrm{sign}(x_n))^T $ and $[\bm{x}]^{p} = (x_{1}^{p},\ldots,x_{n}^{p} )^{T}$ with  $p>0$. 
Given an index set $\mathcal{S} \subset \mathbb{N} $, we use $|\mathcal{S}|$ to denote the cardinality of $\mathcal{S}$.  Using this notation, we use $\bm{I}_{\mathcal{S}\mathcal{S}}$ to represent the submatrix of the identity matrix $\bm{I}$, whose rows and columns are indexed by $\mathcal{S}$. In addition, $\textrm{Diag}(\bm{x})\in\mathbb{R}^{n\times n}$ denotes a diagonal matrix with $\bm{x}$ forming the diagonal elements.   The Hadamard product $\bm{a}\circ \bm{b} \in \mathbb{R}^n$ for $\bm{a}, \bm{b}\in\mathbb{R}^n$ is defined element-wisely as $(\bm{a}\circ \bm{b})_i = a_i b_i$, $i\in[n]$. The active and inactive index set of $\bm{x}$ are respectively defined as 
$$ \mathcal{A}(\bm{x}) := \{i\mid x_i=0, i \in [n] \} \  \textrm{ and }\  \mathcal{I}(\bm{x}) := \{i\mid x_i \neq 0, i \in [n]\}.$$ 
Moreover, 
$x \sim N(\mu,\sigma^{2})$ denotes a Gaussian random variable with mean $\mu$ and variance $\sigma^{2}$; $z\sim U[a,b]$ corresponds to a random variable uniformly distributed on a closed interval $[a,b]$.

We next recall the concept of normal cone presented in \cite{rockafellar2009variational}, which plays a crucial role in establishing optimality conditions in nonlinear programming.
\begin{definition}
	Let $C \subset \mathbb{R}^{n}$ be closed and $\bar{\bm{x}} \in C$. A vector $\bm{v} \in \mathbb{R}^{n}$ is normal to $C$ at $\bar{\bm{x}}$ in the regular sense, written $\bm{v} \in \widehat{\mathcal{N}}_{C}(\bar{\bm{x}})$, if
	\begin{equation*}
		\limsup_{\substack{\bm{x}\underset{\mathcal{C}}{\rightarrow}\bar{\bm{x}}\\\bm{x}\neq\bar{\bm{x}}}} \frac{\la\bm{v}, \bm{x}-\bar{\bm{x}}\ra}{\Vert\bm{x}-\bar{\bm{x}}\Vert_{2}} \leq 0,\  \textrm{ for}\ \bm{x} \in C.
	\end{equation*}
	It is normal to $C$ at $\bar{\bm{x}}$ in the general sense, written $\bm{v} \in \mathcal{N}_{C}(\bar{\bm{x}})$, if there are sequences $\bm{x}^{\nu}\underset{\mathcal{C}}{\rightarrow} \bar{\bm{x}}$ and $\bm{v}^{\nu} \to \bm{v}$ with $\bm{v}^{\nu} \in \widehat{\mathcal{N}}_{C}(\bm{x}^{\nu})$.
\end{definition}

\section{Proposed Algorithm}\label{sec2} 

In this section, we characterize the properties of the optimal solutions of~\eqref{projection.lp} and derive the optimality conditions. We first show that the projection of $\bm{y}$  lies in the same orthant due to the symmetry of the $\ell_p$ ball. 
This property is proven to be true for $\ell_1$-ball projections \cite{condat2016fast, duchi2008efficient}. We show this property for a more general weighted $\ell_p$-ball projection problem, i.e.,
\begin{equation}\label{general.projection}
	\min_{\bm{x} \in \mathbb{R}^n}\ \frac{1}{2}\|\bm{x}-\bm{y}\|_2^2\quad 
	\st  \   \sum_{i=1}^n w_i |x_i|^p \leq \gamma 
\end{equation}
with $p\in(0,1]$ and $w_i \geq 0$, for all $i \in [n]$.  If $w_i = 1$ for all $i \in [n]$ and $p\in(0,1)$, this problem reverts to  \eqref{projection.lp}.  If $p=1$, we have a weighted $\ell_1$-ball projection problem. 
\begin{lemma} \label{projection.absolute}
	Let  $\bm{y}$ be a point satisfying  $\sum_{i=1}^n w_i |y_i|^p > \gamma$  and  $\bm{x}^*$ be a global optimal solution of \eqref{general.projection}.  Then, the following properties hold:
	\begin{itemize} 	
		\item[(i)] $x_i^*y_i  \ge 0$ and  $|x_i^*|  \leq |y_i|$  for all $i\in [n]$.
		
		\item[(ii)] $|\bm{x}^*|$ is a global optimal solution of 
		\begin{equation}\label{tmp.solution}
			\min\limits_{\bm{x}\in\mathbb{R}^n} \ \frac{1}{2}\|\bm{x}- |\bm{y}|\|_2^2 \quad \textrm{\textrm{\normalfont s.t.}}  \sum_{i=1}^n w_i x_i^p  \leq \gamma, \ x_i\in\mathbb{R}_+.
		\end{equation} 
	\end{itemize}
\end{lemma}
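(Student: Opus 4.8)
The plan is to establish both claims by elementary coordinate-wise perturbation arguments, exploiting the fact that the constraint in \eqref{general.projection} depends on $\bm{x}$ only through $|\bm{x}|$ and is therefore invariant under sign flips of individual coordinates, while the objective $\tfrac12\|\bm{x}-\bm{y}\|_2^2$ is separable across coordinates.

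For part (i), I would argue each inequality separately by contradiction against the global optimality of $\bm{x}^*$. To show $x_i^* y_i \ge 0$, suppose instead that $x_i^* y_i < 0$ for some $i$ and replace $x_i^*$ by $-x_i^*$, leaving the other coordinates untouched. Since $|-x_i^*|^p = |x_i^*|^p$, the perturbed point stays feasible, yet the objective changes in the $i$-th term by $(x_i^*+y_i)^2 - (x_i^*-y_i)^2 = 4 x_i^* y_i < 0$, a strict decrease, contradicting optimality. To show $|x_i^*| \le |y_i|$, suppose $|x_i^*| > |y_i|$ and replace $x_i^*$ by $y_i$. Because $t \mapsto t^p$ is nondecreasing on $\mathbb{R}_+$ for $p \in (0,1]$, we have $w_i|y_i|^p \le w_i|x_i^*|^p$, so feasibility is preserved, while the $i$-th objective term drops from $\tfrac12(x_i^*-y_i)^2 > 0$ to $0$, again contradicting optimality.

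For part (ii), I would first note from (i) that $x_i^*$ and $y_i$ share a sign, so $(x_i^* - y_i)^2 = (|x_i^*| - |y_i|)^2$ coordinatewise; hence $|\bm{x}^*|$ is feasible for \eqref{tmp.solution} and attains there exactly the optimal value of \eqref{general.projection}. It then remains to rule out that some feasible $\bm{u} \ge \bm{0}$ of \eqref{tmp.solution} does strictly better. Given such a $\bm{u}$, I would lift it to $\bm{v} := \mathrm{sign}(\bm{y}) \circ \bm{u}$ for the original problem: on coordinates with $y_i \neq 0$ one has $|v_i| = u_i$, so the constraint contribution is unchanged and $(v_i - y_i)^2 = (u_i - |y_i|)^2$, whereas on coordinates with $y_i = 0$ one sets $v_i = 0$, which only decreases both the constraint contribution and the objective term. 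Consequently the objective of $\bm{v}$ in \eqref{general.projection} is at most that of $\bm{u}$ in \eqref{tmp.solution}, so any strict improvement over $|\bm{x}^*|$ would transfer to a strict improvement over $\bm{x}^*$, contradicting its global optimality.

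The arguments are individually short, so the main thing to get right is bookkeeping rather than a single hard step: one must keep the perturbations feasible (which is immediate because the constraint sees only $|x_i|^p$ and $t \mapsto t^p$ is monotone) and handle the degenerate coordinates where $y_i = 0$ carefully in the sign-lifting of part (ii), since there $\mathrm{sign}(y_i)$ is ambiguous and the naive identity $(v_i - y_i)^2 = (u_i - |y_i|)^2$ must be replaced by the inequality obtained from forcing $v_i = 0$.
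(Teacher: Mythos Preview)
Your proposal is correct and follows essentially the same coordinate-wise perturbation strategy as the paper. The only differences are cosmetic: for the sign condition in (i) the paper sets the offending coordinate to $0$ rather than flipping its sign, and for (ii) the paper argues more tersely via $\|\bm{x}^*-\bm{y}\|_2^2=\||\bm{x}^*|-|\bm{y}|\|_2^2$ without spelling out the sign-lifting or the $y_i=0$ case, which you handle more carefully.
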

\begin{proof} 
	We prove (i) by contradiction. Assume there exists $j \in [n]$ for which $x_{j}^*  y_{j} < 0$.
	Now consider  $\tilde{\bm x}$ such that 
	\[\tilde{x}_j = 0\  \textrm{ and }\   \tilde{x}_i = x_i^*,  i\neq j.
	\]
	We can see that $\sum_{i=1}^n w_i |\tilde{x}_i|^p \leq \sum_{i=1}^n w_i |x_i^*|^p \leq \gamma$; hence $\tilde{x}$ is  feasible for \eqref{general.projection}. Note that $(x_j^* - y_j)^2 = (x_j^*)^2 - 2x_j^*y_j + y_j^2 > y_j^2 = (\tilde{x}_j-y_j)^2$ since $x_j^*y_j < 0$, 
	implying  $\|\bm{x}^*- \bm{y}\|_2^2 > \|\tilde{\bm{x}}- \bm{y}\|_2^2$.  This contradicts the global optimality of $\bm{x}^*$. Hence 
	$x_i^*y_i  \ge 0$  for all $i\in [n]$. 
	
	We also assume by contradiction that  there exists $j \in [n]$ for which $|x_j^*| > |y_j|$.
	Now consider  $\hat{\bm x}$ such that
	\[ \hat{x}_j = y_j \  \textrm{ and }\    \hat{x}_i=x_i^*, \forall i\in[n], i\neq j.\]
	We can see that $\sum_{i=1}^n w_i |\hat{x}_i|^p < \sum_{i=1}^n w_i  |x_i^*|^p = \gamma$; hence $\hat{\bm{x}}$ is feasible. Note that $(x_j^*-y_j)^2 > (\hat{x}_j-y_j)^2 = 0$,  which implies that $\|\bm{x}^*- \bm{y}\|_2^2 > \|\hat{\bm{x}}- \bm{y}\|_2^2$, contradicting the global optimality of $\bm{x}^*$.  Therefore, $|x_i^*|  \leq |y_i|$  for all $i\in [n]$. 
	
	As for (ii), notice that $\|\bm{x}^*- \bm{y}\|^2_2 \leq \|\bm{x}- \bm{y}\|^2_2$ for any feasible $\bm{x}$ for \eqref{tmp.solution}, which along with
	statement (i), implies that $\|\bm{x}^*-\bm{y}\|^2_2 = \|\bm{x}^*\|_2^2 - 2\langle\bm{x}^*,\bm{y}\rangle + \|\bm{y}\|_2^2 = \||\bm{x}^*|-|\bm{y}|\|^2_2   \leq \|\bm{x}-|\bm{y}|\|^2_2$ for any $\bm{x}$ feasible for \eqref{tmp.solution}.
	This proves that $|\bm{x}^*|$ is global optimal for \eqref{tmp.solution}.  
\end{proof}

Lemma \ref{projection.absolute} (i)  indicates  that we can restrict the discussions of $\ell_{p}$ ball projection problem on $\mathbb{R}_+^n$. In other words, instead of solving the original projection problem \eqref{general.projection}, we can instead compute the projection of $|\bm{y}|$ onto the $\ell_p$ ball in the positive orthant.  Letting $\bar{\bm{y}} := \vert\bm{y}\vert$, we can then focus on the following optimization problem
\begin{equation}\label{projection.lp.+}\tag{$\mathscr{P}$}
	\begin{aligned}
		\min\limits_{\bm{x}} & \quad f(\bm{x}):=  \frac{1}{2}\|\bm{x}-\bar{\bm{y}}\|_2^2 \\
		\st &\quad  \sum_{i=1}^n x_i^p \leq \gamma, \quad\   \bm{x} \in \mathbb{R}^{n}_{+}.
	\end{aligned}
\end{equation}
After obtaining the optimal solution $\bm{x}^{\star}$ to \eqref{projection.lp.+}, the solution of \eqref{projection.lp}  can thus be recovered by setting $  \textrm{sign}(\bm{y})\circ \bm{x}^{\star}$. In addition, we can see that the optimal solution satisfies $x_i^* = 0$ if $y_i=0$ by Lemma \ref{projection.absolute} (i).

\subsection{First-order Necessary Conditions for Optimality}
In this subsection, we derive the first-order necessary optimality conditions to characterize the solutions of \eqref{projection.lp.+} 
by referring to \cite{wang2021constrained}. For briefty, from now on, we use  $\Omega$ to denote the feasible region of~\eqref{projection.lp.+}, i.e., 
\begin{equation}\label{eq.omega}
	\Omega = \{ \bm{x} \mid  \sum_{i=1}^nx_i^{p} \leq \gamma, \  \bm{x}\in\mathbb{R}_+^n\}.
\end{equation}

By \cite[Theorem 6.12]{rockafellar2009variational}, any locally optimal solution of \eqref{projection.lp.+}  satisfies the following first-order necessary optimality condition. 
\begin{theorem}\label{thm.optimality}
	Let $\bm{x}$ be a local solution of \eqref{projection.lp.+}. Then the following holds
	\begin{equation}\label{eq:optimality}
		\bm{x} - \bar{\bm{y}} + \mathcal{N}_{\Omega}(\bm{x}) \ni \bm{0}.
	\end{equation}
\end{theorem}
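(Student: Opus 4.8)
The plan is to read \eqref{eq:optimality} as nothing more than the generalized Fermat rule for minimizing the smooth objective $f$ over the closed feasible set $\Omega$, which is exactly the content of the cited \cite[Theorem 6.12]{rockafellar2009variational}. I would therefore either invoke that theorem verbatim or, since the objective here is so concrete, reproduce its short argument directly from the definition of the regular normal cone stated above. Before doing so I would record the two structural facts that make the machinery applicable: the objective is a quadratic, hence continuously differentiable with $\nabla f(\bm{x}) = \bm{x} - \bar{\bm{y}}$; and the set $\Omega$ in \eqref{eq.omega} is closed, being the intersection of the closed orthant $\mathbb{R}_+^n$ with the sublevel set $\{\bm{x}\mid \sum_{i=1}^n x_i^p \le \gamma\}$ of the function $\bm{x}\mapsto\sum_{i=1}^n x_i^p$, which is continuous on $\mathbb{R}_+^n$. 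Closedness is what makes $\Ncal_\Omega(\bm{x})$ (and $\wh{\Ncal}_\Omega(\bm{x})$) well defined in the sense of the Definition.

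The core step extracts the normal-cone inclusion from local optimality. Since $\bm{x}$ is a local solution, $f(\bm{z}) \ge f(\bm{x})$ for every feasible $\bm{z}\in\Omega$ sufficiently close to $\bm{x}$. Because $f$ is quadratic the first-order expansion is exact,
\[
f(\bm{z}) = f(\bm{x}) + \la \bm{x}-\bar{\bm{y}},\ \bm{z}-\bm{x}\ra + \tfrac{1}{2}\|\bm{z}-\bm{x}\|_2^2,
\]
so the optimality inequality rearranges, after dividing by $\|\bm{z}-\bm{x}\|_2>0$, into
\[
\frac{\la \bar{\bm{y}}-\bm{x},\ \bm{z}-\bm{x}\ra}{\|\bm{z}-\bm{x}\|_2} \le \tfrac{1}{2}\|\bm{z}-\bm{x}\|_2 \quad \textrm{for } \bm{z}\in\Omega \textrm{ near } \bm{x}.
\]
Taking $\limsup$ as $\bm{z}\underset{\Omega}{\rightarrow}\bm{x}$ sends the right-hand side to $0$, which is precisely the defining inequality for $\bar{\bm{y}}-\bm{x} = -\nabla f(\bm{x}) \in \wh{\Ncal}_\Omega(\bm{x})$.

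To finish I would pass from the regular to the general normal cone: by definition $\wh{\Ncal}_\Omega(\bm{x}) \subseteq \Ncal_\Omega(\bm{x})$ (constant sequences), hence $\bar{\bm{y}}-\bm{x} \in \Ncal_\Omega(\bm{x})$, i.e. $\bm{0} \in (\bm{x}-\bar{\bm{y}}) + \Ncal_\Omega(\bm{x})$, which is \eqref{eq:optimality}. There is no substantive obstacle here: the argument is routine once $f$ is identified as smooth, and the only places deserving a line of care are the closedness of $\Omega$ (so the normal-cone definition applies) and the harmless inflation $\wh{\Ncal}_\Omega\subseteq\Ncal_\Omega$, which merely weakens the conclusion to the form the cited theorem already packages. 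The one thing I would be careful \emph{not} to claim is a converse or a multiplier (KKT) form, since $\Omega$ need not satisfy a constraint qualification at $\bm{x}$ and the theorem only asserts the necessary inclusion \eqref{eq:optimality}.
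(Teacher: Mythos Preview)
Your proposal is correct and matches the paper's approach: the paper gives no proof at all beyond the citation to \cite[Theorem 6.12]{rockafellar2009variational}, so your plan to invoke that theorem (after checking that $f$ is smooth with $\nabla f(\bm{x})=\bm{x}-\bar{\bm{y}}$ and that $\Omega$ is closed) is exactly what the authors do. Your optional direct verification via the quadratic expansion and the inclusion $\wh{\Ncal}_\Omega(\bm{x})\subseteq\Ncal_\Omega(\bm{x})$ is a correct, self-contained reproduction of that theorem in this special case and goes further than the paper itself.
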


To verify the satisfaction of the necessary condition \eqref{eq:optimality}, we need to further investigate the properties of $\mathcal{N}_{\Omega}(\bm{x})$. Note from our assumption that the point to be projected is not in the ball, and the local optimal solution $\bm{x}$ to~\eqref{projection.lp.+} lies on the boundary of the $\ell_{p}$ ball, i.e., $\sum_{i\in\Ical(\bm{x})} x_i^p = \gamma$. 
The following proposition characterizes the  elements of $\mathcal{N}_\Omega(\bm{x})$, and its proof mainly follows from \cite[Theorem 2.2 \& Theorem 2.6 (c)]{wang2021constrained} and \cite[Theorem 6.42]{rockafellar2009variational}.

\begin{proposition}\label{Pro.normal}
	Let $\bm{x} \in \Omega$. It holds that
	\begin{equation*}
		\mathcal{N}_{\Omega}(\bm{x}) = \left\lbrace 
		\begin{aligned}
			&\{\bm{v} \in \mathbb{R}^{n}_{+}\mid v_{i} = \lambda px_i^{p-1},i \in \Ical(\bm{x}); \lambda \geq 0\}, &~\textrm{ if } \sum_{i=1}^{n}x_i^{p} = \gamma,\\
			& \{\bm{0}\}, &~\textrm{ if } \sum_{i=1}^{n}x_i^{p} < \gamma.
		\end{aligned}\right.
	\end{equation*}
\end{proposition}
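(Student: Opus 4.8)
The plan is to dualize a tangent-cone computation, after splitting the coordinates of $\bm x$ into the inactive block $\Ical(\bm x)$, on which $g(\bm x):=\sum_i x_i^{p}$ is smooth with $\partial g/\partial x_i = p\,x_i^{p-1}>0$, and the active block $\Acal(\bm x)$, on which the one-sided derivative of $t\mapsto t^{p}$ at $0$ is infinite. I would first dispose of the easy branch $\sum_i x_i^{p}<\gamma$: here the $\ell_p$ constraint is slack, so in a neighbourhood of $\bm x$ the set $\Omega$ coincides with $\{\bm z : z_i\ge 0,\ i\in\Acal(\bm x)\}$, whose regular and general normal cones both equal $\{\bm v : v_i=0\ (i\in\Ical(\bm x)),\ v_i\le 0\ (i\in\Acal(\bm x))\}$. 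Imposing the membership $\bm v\in\mathbb{R}^n_+$ that appears in the statement then forces $\bm v=\bm 0$, which is the second branch.

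For the boundary branch $\sum_i x_i^{p}=\gamma$, the heart of the argument is to establish that the tangent cone, denoted $T_\Omega(\bm x)$, is
\[
T_\Omega(\bm x)=\Bigl\{\bm d : d_i=0\ (i\in\Acal(\bm x)),\ \textstyle\sum_{i\in\Ical(\bm x)} p\,x_i^{p-1}d_i\le 0\Bigr\}.
\]
Feasibility of $\bm x+t\bm d$ to leading order requires $d_i\ge 0$ on $\Acal(\bm x)$ from the orthant and $g(\bm x+t\bm d)\le\gamma$ from the ball. Using the expansion $g(\bm x+t\bm d)=\gamma+t\sum_{i\in\Ical}p\,x_i^{p-1}d_i+t^{p}\sum_{i\in\Acal}d_i^{p}+o(t)$ and comparing orders — since $p\in(0,1)$ the term $t^{p}\sum_{\Acal}d_i^{p}$ dominates the $O(t)$ contribution — shows that any nonzero $d_i$ on $\Acal(\bm x)$ pushes the iterate strictly outside the ball, forcing $d_i=0$ there, after which the surviving inactive directions must satisfy the linearized inequality. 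Taking the polar of this cone, which is a subspace intersected with a half-space and hence has polar equal to the orthogonal complement plus the generating ray, yields the regular normal cone $\wh{\Ncal}_\Omega(\bm x)=\{\bm v : v_i=\lambda p\,x_i^{p-1}\ (i\in\Ical),\ v_i\in\mathbb{R}\ (i\in\Acal),\ \lambda\ge 0\}$; recording the nonnegativity $\bm v\in\mathbb{R}^n_+$ (automatic on $\Ical$, a genuine restriction on $\Acal$) gives exactly the first branch.

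Finally I would argue that the general normal cone coincides with this regular cone at such boundary points — i.e. that $\Omega$ is Clarke regular there — by checking that limits of regular normals taken along the smooth part of the ball boundary and along the orthant faces introduce no vectors outside the set just described; this is the step where I would lean on \cite[Theorems 2.2 and 2.6(c)]{wang2021constrained} together with the intersection/preimage calculus of \cite[Theorem 6.42]{rockafellar2009variational} rather than re-derive the limiting behaviour by hand. The hard part will be the tangent-cone step: the infinite slope of $t^{p}$ at the active coordinates creates a cusp, so the standard smooth constraint-qualification calculus does not apply directly and one must justify through the order comparison above that the active directions collapse to $\{\bm 0\}$. It is precisely this collapse that turns the active block of the normal cone into a free component, which, once the constraint $\bm v\in\mathbb{R}^n_+$ is imposed, becomes the nonnegative (rather than single-multiplier) part of the claimed formula.
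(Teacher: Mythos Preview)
Your argument is sound and considerably more explicit than the paper's, which merely cites \cite[Theorems 2.2 and 2.6(c)]{wang2021constrained} and \cite[Theorem 6.42]{rockafellar2009variational} without writing out any computation. Your order-comparison step ($t^{p}$ dominating $t$ on the active coordinates, forcing $d_{\Acal}=0$ in the tangent cone) is precisely the mechanism those references package, and you defer the Clarke-regularity verification to the same citations, so in substance the two routes coincide; yours simply fills in the tangent-cone and polar calculation that the paper leaves to the reader.

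What your explicit computation exposes --- and the paper does not flag --- is that the true (regular and limiting) normal cone has a \emph{free} $\Acal$-block, not a nonnegative one: at a boundary point it is $\{\bm v:\ v_i=\lambda p x_i^{p-1}\ (i\in\Ical),\ v_i\in\mathbb R\ (i\in\Acal),\ \lambda\ge 0\}$, and in the interior-of-ball case with $\Acal(\bm x)\ne\emptyset$ it is $\{\bm v:\ v_{\Ical}=0,\ v_{\Acal}\le 0\}\ne\{\bm 0\}$. Thus the proposition as written actually describes $\mathcal N_\Omega(\bm x)\cap\mathbb R^n_+$, not $\mathcal N_\Omega(\bm x)$ itself. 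You handle this by ``imposing the membership $\bm v\in\mathbb R^n_+$,'' which is a fair reading of the right-hand side but not what the equality literally asserts. The discrepancy is harmless downstream --- in the optimality condition $\bar{\bm y}-\bm x\in\mathcal N_\Omega(\bm x)$ one has $\bar y_i-x_i=\bar y_i\ge 0$ on $\Acal(\bm x)$ automatically --- but you should state this openly rather than silently intersecting with $\mathbb R^n_+$ to match the claimed formula.
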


We depict the geometrical representation of the normal cone at $\bm{x}\in\Omega$ for a two-dimensional example in \Cref{fig:cone}.
\begin{figure}[H]
	\centering
	\includegraphics[width=3.5in]{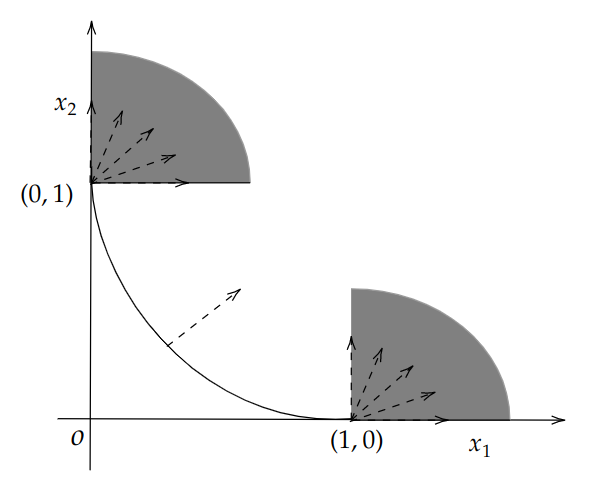}
	\caption{A two-dimensional illustration of the normal cone for the unit $\ell_{0.5}$-ball constraint in the nonnegative orthant. The normal cone has infinitely many normal directions at $(0,1)$ and $(1,0)$ as represented in the shaded region, and it has a single normal direction besides those two points. }
	\label{fig:cone}
\end{figure}

Using Proposition \ref{Pro.normal} and by~\Cref{thm.optimality}, the first-order necessary conditions for \eqref{projection.lp.+} are stated as 
\begin{subequations}\label{eq:general_kkt1}
	\begin{alignat}{2}
		(\bar{y}_i  - x_i)x_i   &= \ \lambda p x_{i}^{p}, \   i \in [n],\label{kkt.2}\\
		\sum_{i\in[n]} x_i^p  &= \  \gamma, \label{kkt.3}\\
		\bx \ge 0, \ 	\lambda  &\ge\ 0.\label{kkt.5} 
	\end{alignat}
\end{subequations}
where $\lambda$ is the dual variable/multiplier.

\section{An Iteratively Reweighted $\ell_1$ Ball Projection Algorithm}\label{Sec: algorithm}
In this section, we describe our proposed algorithm for solving \eqref{projection.lp.+}, which can be viewed as an instance of the MM framework. We use a weighted $\ell_1$ ball as a majorizer of the perturbed $\ell_{p}$ ball in the majorization step, and the majorizer is constructed by relaxing and linearizing the $\ell_p$ ball constraint. In the minimization step, the next iterate is computed as the projection of $\bar{\bm{y}}$ onto the weighted $\ell_1$ ball. In this way, our algorithm needs to solve a sequence of weighted $\ell_1$ ball projection subproblems, which can be solved efficiently within a finite number of iterations.  

\subsection{Weighted $\ell_{1}$-ball Approximation} 
Now we describe our weighted $\ell_1$ ball projection subproblem. First, 
we add a perturbation $\epsilon_i > 0$ to have a continuous differentiable approximate model with respect to $|x_i|$, 
\begin{equation*}
	\vert x_i\vert^{p} \approx (\vert x_i\vert+\epsilon_i)^{p}.
\end{equation*} 
Correspondingly, we denote the perturbed $\ell_{p}$ ball constraint in~\eqref{projection.lp.+} by $\Omega_{\bm{\epsilon}} := \{\bm{x}\mid \sum_{i=1}^{n} (x_{i} + \epsilon_{i})^{p}\leq\gamma, \bm{x}\in\mathbb{R}^{n}_{+}\}$. Then at $\bm{x}^k$ with perturbation $\bm{\epsilon}^{k} > \bm{0}$, by concavity of the $\ell_{p}$ norm, we approximate $(\vert x_i\vert+\epsilon_i^k)^p$ by linearizing it with respect to $\vert x_i^k\vert$ to have 
\begin{equation*}
	(\vert x_i\vert+\epsilon_i^k)^p  \leq  (|x_i^k|+\epsilon_i^k)^p +  p( |x_i^k| + \epsilon_i^k) ^{p-1} (|x_i| -|x_i^k|).
\end{equation*}
Therefore, at the $k$th iterate $\bm{x}^k$, the constraint in~\eqref{projection.lp.+} can be replaced by 
\begin{equation}\label{linearize}
	\sum_{i=1}^n (|x^k_i| +\epsilon_i^k)^p + \sum_{i=1}^n  w_i^k (|x_i|-|x_i^k|) \leq \gamma
\end{equation} 
with $ w_i^k := w(x_i^k, \epsilon_i^k) = p(|x_i^k| + \epsilon_i^k)^{p-1}$.

Defining $\gamma^k :=  \gamma -  \sum_{i=1}^n (x_i^k+\epsilon_i^k)^p +  \sum_{i=1}^n  w_i^k |x_i^k|$ and 
rearranging,  we have the $k$th convex relaxation of \eqref{projection.lp.+}, which reads
\begin{equation}\label{subproblem1}\tag{$\mathscr{P}_{\textrm{sub}}$}
	\begin{aligned}
		\min\limits_{\bm{x}} & \quad  \frac{1}{2}\|\bm{x}-\bar{\bm{y}}\|_2^2\\
		\st & \quad\bm{x} \in \Omega_{\bm{w}^k}:= \{\bm{x}\mid\sum_{i=1}^n w_i^k x_i \leq \gamma^k,\ \bm{x}\in\mathbb{R}^n_+\}.
	\end{aligned}
\end{equation}

The feasible region of \eqref{subproblem1} is in fact a weighted $\ell_1$ ball  intersected with the nonnegative orthant. Since $\bar{\bm{y}}\in\mathbb{R}_+^n$,   \eqref{subproblem1}  is clearly  equivalent to the projection of $\bar{\bm{y}}$  onto the weighted $\ell_1$ ball $\Omega_{\bm{w}^k}$. 

The next iterate $\bm{x}^{k+1}$ is then computed as the optimal solution of \eqref{subproblem1}.  
Therefore, $\bm{x}^{k+1}$ satisfies the following  Karush-Kuhn-Tucker  (KKT) conditions for \eqref{subproblem1}
\begin{subequations}\label{eq: subproblem_kkt}
	\begin{alignat}{2}
		\bar{y}_i -  x_i^{k+1} - \lambda^{k+1} w_i^{k} + \mu_i^{k+1} &=0,  \ \  i\in [n] \label{kkt.tmp.1}\\
		\langle \bm{\mu}^{k+1}, \bm{x}^{k+1} \rangle & =  0,    \label{kkt.tmp.2}\\
		\mu_i^{k+1} &\ge  0,  \ \ i\in[n] \label{kkt.tmp.3}\\
		\sum_{i\in\Ical(\bm{x}^{k+1})} w_i^{k} x_i^{k+1} &\le   \gamma^{k}, \label{eq:ball-constraint}\\
		\lambda^{k+1} &\ge 0,\\
		\lambda^{k+1}(\sum_{i\in\Ical(\bm{x}^{k+1})} w_i^{k} x_i^{k+1} -\gamma^{k}) &= 0,\label{kkt.3.l1}
	\end{alignat}
\end{subequations} 
where $ \lambda^{k+1}$ and $\bm{\mu}^{k+1} $ are multipliers. 
Conditions \eqref{kkt.tmp.1}--\eqref{kkt.tmp.3} can be equivalently written as 
\begin{subequations}\label{eq: subproblem_kkt_real}
	\begin{alignat}{2}
		\bar{y}_i - \lambda^{k+1} w_i^{k} &\le 0,  \  i\in \Acal(\bm{x}^{k+1})\label{kkt.2.l1},\\
		\bar{y}_i - x_i^{k+1} -\lambda^{k+1} w_i^{k}&= 0,   \ i\in\Ical(\bm{x}^{k+1}) \label{kkt.1.l1}.
	\end{alignat}
\end{subequations} 
Clearly,  $\lambda^{k+1}$ satisfies 
\begin{equation}\label{lambda.l1}
	\lambda^{k+1}  = \frac{ \sum\limits_{i\in\Ical(\bm{x}^{k+1})} ( \bar{y}_i - x_i^{k+1} ) }{\sum\limits_{i\in\Ical(\bm{x}^{k+1})} w_i^k}. 
\end{equation}

\subsection{A Dynamic Updating Strategy for \boldmath{$\epsilon$}} 

Our algorithm solves a sequence of weighted $\ell_1$ ball projection subproblems with the relaxation parameters $\epsilon_i, i\in [n]$ driven to $0$. However, if we decrease $\epsilon_i, i\in [n]$ too slowly, then the algorithm may suffer from unsatisfactorily slow convergence. On the other hand, if $\epsilon_i, i\in [n]$ were driven to 0 too fast,  then the algorithm may quickly get trapped in local solutions or even fail to converge to stationary points. Unlike the usual parameter tuning strategy in the penalized model which tries a sequence of penalty parameters to approximately find one satisfying the needs, we design a dynamic updating strategy \cite{burke2015iterative} that can automatically determine when $\epsilon_i, i\in[n]$ should be reduced. We update $\epsilon_i, i\in[n]$  according to the following rule
\begin{equation}\label{update.condition} 
	\begin{cases} 
		\bm{\epsilon}^{k+1} \in (\bm{0}, \theta \bm{\epsilon}^k],   & \text{ if } \|\bm{x}^{k+1} - \bm{x}^k\|_2  \| \text{sign}(\bx^{k+1}-\bx^k)\circ  \bw^{k}\|_{2}^{\tau } \le M, \\
		\bm{\epsilon}^{k+1} = \bm{\epsilon}^k,  & \text{ otherwise}. 
	\end{cases}
\end{equation} 
with $0<\theta<1$, $M > 0$ and $\tau>1$. As discussed in the numerical study, we can determine these parameters with a rough tuning, in particular $\tau$ could be selected close to $1$.

Our proposed  Iteratively Reweighted $\ell_1$-Ball Projection (IRBP) algorithm for solving  \eqref{projection.lp.+} 
is stated in  \Cref{alg.l1}.
\begin{algorithm}[H]
	\caption{Iteratively Reweighted $\ell_1$ Ball Projection Algorithm (IRBP)}
	\label{alg.l1}
	\begin{algorithmic}[1]
		\STATE Input:  $\bm{y}$, $p$, and $\gamma$. Let $\bar{\bm{y}} = |\bm{y}|$.  Choose  $\tau > 1$, $0<\theta<1$  and $M > 0$. 
		\STATE (Initialization)  Choose $(\bm{x}^0, \bm{\epsilon}^0) \in \mathbb{R}^n_+\times \mathbb{R}^n_{++}$ such that $\Vert\bm{x}^{0}\Vert_{p}^{p}\leq \gamma$, $\Vert\bm{\epsilon}^{0}\Vert_{p}^{p}\leq\gamma$ and $\| \bm{x}^0 + \bm{\epsilon}^0\|_p^p \le \gamma$.  Set  $k = 0$.
		\STATE (Compute the weights) Set   $w_i^k = p(|x_i^k| + \epsilon_i^k)^{p-1}$ for $i\in\left[ n \right] $.
		\STATE (Solve the subproblem for $\bm{x}^{k+1}$) Compute $\bm{x}^{k+1}$ as the optimal solution of \eqref{subproblem1}.		
		\STATE (Set the new relaxation vector $\bm{\epsilon}^{k+1}$) Obtain $ \bm{\epsilon}^{k+1}$ according to
		\begin{equation*}
			\begin{cases} 
				\bm{\epsilon}^{k+1} \in (\bm{0}, \theta \bm{\epsilon}^k],   & \text{ if } \|\bm{x}^{k+1} - \bm{x}^k\|_2  \| \text{sign}(\bx^{k+1}-\bx^k)\circ  \bw^{k}\|_{2}^{\tau } \le M, \\
				\bm{\epsilon}^{k+1} = \bm{\epsilon}^k,  & \text{ otherwise}. 
			\end{cases}
		\end{equation*} 
		\STATE Set $k\gets k+1$ and go to step~$3$. 
	\end{algorithmic}
\end{algorithm}

The initialization requirement of $(\bm{x}^0, \bm{\epsilon}^0)$ in~\Cref{alg.l1} is easy to fulfill.  For example, we can simply choose $ \epsilon_i^0 = (\frac{\gamma}{n})^{1/p}$ such that $\|\bm{\epsilon}^0\|_p^p < \gamma$ and then set $\bm{x}^0=\bm{0}$. On the other hand, the weighted $\ell_1$-ball projection subproblem of the form \eqref{subproblem1} can be addressed by calling the exact projection algorithms proposed in \cite{michelot1986finite,condat2016fast, zhang2020inexact} with a straightforward modification. 

\section{Convergence Analysis of IRBP}\label{sec: analysis}
This section is devoted to the convergence analysis of the proposed~\Cref{alg.l1}, including the well-posedness, global convergence, conditions guaranteeing unique convergence, and the worst-case complexity of the optimality residuals.

\subsection{Well-posedness and Basic Properties} 

The following theorem guarantees that the subproblems of our proposed algorithm are always feasible so that the algorithm is well defined. 

\begin{theorem}[Well-posedness]\label{lemma.wellpose}
	It holds true that $\|\bm{x}^k + \bm{\epsilon}^k\|^p_p \le \gamma, \ \forall k \in \mathbb{N}$. Hence $\gamma^k>0$, and all subproblems in~\Cref{alg.l1} are feasible.
\end{theorem}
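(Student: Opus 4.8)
The plan is to argue by induction on $k$ that $\|\bm{x}^k + \bm{\epsilon}^k\|_p^p \le \gamma$, and then read off the two consequences. The base case $k=0$ is exactly the initialization requirement imposed in \Cref{alg.l1}. For the inductive step, the central observation is that, after substituting the definition $\gamma^k = \gamma - \sum_{i=1}^n (x_i^k + \epsilon_i^k)^p + \sum_{i=1}^n w_i^k x_i^k$ (recall $\bm{x}^k\in\mathbb{R}^n_+$, so $|x_i^k| = x_i^k$), the constraint $\sum_{i=1}^n w_i^k x_i \le \gamma^k$ defining $\Omega_{\bm{w}^k}$ is \emph{precisely} the inequality $\sum_{i=1}^n \big[(x_i^k + \epsilon_i^k)^p + w_i^k(x_i - x_i^k)\big] \le \gamma$, whose left-hand side is the tangent (affine) majorant of $\bm{x} \mapsto \sum_i (x_i + \epsilon_i^k)^p$ constructed at $\bm{x}^k$. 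Since $t \mapsto t^p$ is concave on $\mathbb{R}_+$, this majorant dominates the function itself — which is exactly the linearization bound used to build \eqref{subproblem1} — so every $\bm{x} \in \Omega_{\bm{w}^k}$ satisfies $\sum_i (x_i + \epsilon_i^k)^p \le \gamma$.

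Applying this to the subproblem minimizer $\bm{x}^{k+1}$, which lies in $\Omega_{\bm{w}^k}$ by construction, yields $\|\bm{x}^{k+1} + \bm{\epsilon}^k\|_p^p \le \gamma$. It then remains to absorb the update of the perturbation. Whichever branch of the rule \eqref{update.condition} is taken, one has $\bm{\epsilon}^{k+1} \le \bm{\epsilon}^k$ componentwise (in the first branch because $\theta < 1$). Since $x_i^{k+1} \ge 0$ and $t \mapsto t^p$ is nondecreasing, $(x_i^{k+1} + \epsilon_i^{k+1})^p \le (x_i^{k+1} + \epsilon_i^k)^p$ for each $i$; summing gives $\|\bm{x}^{k+1} + \bm{\epsilon}^{k+1}\|_p^p \le \|\bm{x}^{k+1} + \bm{\epsilon}^k\|_p^p \le \gamma$, which closes the induction.

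Finally, to derive $\gamma^k > 0$ and feasibility, I substitute the just-proved bound $\sum_i (x_i^k+\epsilon_i^k)^p \le \gamma$ into the definition of $\gamma^k$ to obtain $\gamma^k \ge \sum_i w_i^k x_i^k \ge 0$, using $w_i^k = p(x_i^k+\epsilon_i^k)^{p-1} > 0$ and $x_i^k \ge 0$; in particular $\bm{0} \in \Omega_{\bm{w}^k}$ (indeed $\bm{x}^k \in \Omega_{\bm{w}^k}$), so every subproblem is feasible and \Cref{alg.l1} is well defined. I expect the delicate point to be the \emph{strict} inequality $\gamma^k > 0$ rather than the main estimate: the bound above is strict as soon as $\bm{x}^k \neq \bm{0}$, while the only remaining configuration $\bm{x}^k = \bm{0}$ forces $\gamma^k = \gamma - \|\bm{\epsilon}^k\|_p^p$, so I would close this case by invoking $\|\bm{\epsilon}^k\|_p^p < \gamma$, which follows from the strict initialization of $\bm{\epsilon}^0$ together with the componentwise nonincrease of $\bm{\epsilon}^k$. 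The bulk of the work is conceptual, namely recognizing the tangent-majorant structure that transfers feasibility of the linearized subproblem back to the perturbed $\ell_p$ ball; the rest is routine monotonicity.
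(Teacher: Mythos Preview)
Your proof is correct and follows essentially the same approach as the paper: induction on $k$, using concavity of $t\mapsto t^p$ to pass from feasibility for the linearized constraint back to $\|\bm{x}^{k+1}+\bm{\epsilon}^k\|_p^p\le\gamma$, and then the componentwise monotonicity $\bm{\epsilon}^{k+1}\le\bm{\epsilon}^k$ to close the induction. Your treatment of the strict inequality $\gamma^k>0$ is in fact more careful than the paper's, which simply asserts it as a consequence without isolating the case $\bm{x}^k=\bm{0}$.
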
 

\begin{proof} 
	First, $\|\bm{x}^k + \bm{\epsilon}^k\|^p_p \le \gamma$ implies that the $k$th subproblem is feasible, since in this case $\bm{x}^k$ satisfies  the constraint of the $k$th subproblem 
	\[ \|\bm{x}^k+\bm{\epsilon}^k\|_p^p + \sum_{i=1}^n p(x_i^k+\epsilon^k_i)^{p-1}(x_i -x_i^k)
	\le \gamma. \]
	Therefore, we only have to prove $\|\bm{x}^k + \bm{\epsilon}^k\|^p_p \le \gamma$ holds true for all $k$.

	Since the initial point is chosen such that $\|\bm{x}^0 + \bm{\epsilon}^0\|^p_p \le \gamma$, by induction, 
	we only have to show that $\|\bm{x}^{k+1} + \bm{\epsilon}^{k+1}\|^p_p \le \gamma$ provided that
	$\|\bm{x}^k + \bm{\epsilon}^k\|^p_p \le \gamma$. 
	$\bm{x}^{k+1}$  satisfies the constraint of the $k$th subproblem, since $\bm{x}^{k+1}$ is the optimal solution to the $k$th subproblem. By concavity of $(\cdot)^{p}$, we have  
	\begin{equation*}
		\begin{aligned}
			\|\bm{x}^{k+1}+\bm{\epsilon}^k\|_p^p \le & \  \|\bm{x}^k+\bm{\epsilon}^k\|_p^p + \sum_{i=1}^n p(x_i^k+\epsilon^k_i)^{p-1}(x_i^{k+1}-x_i^k)
			\le \gamma, 
		\end{aligned}
	\end{equation*}
	which, together with   $\epsilon_i^{k+1} \le \epsilon_i^k$ for all $i\in [n]$ and $\bm{x}^{k+1}\in\mathbb{R}^n_+$, implies that  $\|\bm{x}^{k+1}+\bm{\epsilon}^{k+1}\|_p^p \le \|\bm{x}^{k+1}+\bm{\epsilon}^k\|_p^p \le \gamma$. This completes the proof.
\end{proof}

In the following lemma, we prove that the objective of~\eqref{projection.lp.+} has a quadratic improvement after each iteration, and the displacement of the iterates vanishes in the limit.

\begin{lemma}[Descent property]\label{lem.x-x}
	Let the sequence $\{\bm{x}^{k}\}$ be generated by~\Cref{alg.l1}. It holds that 
	\begin{itemize}
		\item[(i)] The sequence $\{\|\bm{x}^k-\bar{\bm{y}}\|_2^2\}$  monotonically decreases. Indeed,
		\begin{equation}\label{eq: sum1}
			\|\bm{x}^k-\bar{\bm{y}}\|_2^2 - \|\bm{x}^{k+1}-\bar{\bm{y}}\|_2^2   \geq \|\bm{x}^{k}-\bm{x}^{k+1}\|_2^2, \ \forall k \in \mathbb{N}.
		\end{equation}
		
		\item[(ii)] The sequence $\{\bm{x}^{k}\}$ satisfies $ \sum\limits_{k=0}^{+\infty} \| \bm{x}^{k+1} - \bm{x}^k\|_2^2 < +\infty$, and consequently, $ \lim\limits_{k\to +\infty} \| \bm{x}^{k+1} - \bm{x}^k\|_2^2 = 0$.
	\end{itemize} 
\end{lemma}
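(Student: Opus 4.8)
The plan is to recognize that the entire lemma follows from the standard firm nonexpansiveness (Pythagorean) property of Euclidean projections onto a convex set, combined with the well-posedness result of \Cref{lemma.wellpose}. The crucial structural observation is that, by Step~4 of \Cref{alg.l1}, the new iterate $\bm{x}^{k+1}$ is exactly the Euclidean projection of $\bar{\bm{y}}$ onto the convex set $\Omega_{\bm{w}^k}$, while the previous iterate $\bm{x}^k$ is itself \emph{feasible} for this same subproblem.

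First I would establish feasibility of $\bm{x}^k$ for the $k$th subproblem. This is precisely the content of the well-posedness argument: since $\|\bm{x}^k+\bm{\epsilon}^k\|_p^p \le \gamma$ by \Cref{lemma.wellpose}, concavity of $t\mapsto t^p$ gives, when the linearization in \eqref{linearize} is evaluated at $\bm{x}=\bm{x}^k$, the inequality $\|\bm{x}^k+\bm{\epsilon}^k\|_p^p + \sum_{i=1}^n w_i^k(x_i^k - x_i^k) \le \gamma$, which is equivalent to $\sum_{i=1}^n w_i^k x_i^k \le \gamma^k$; that is, $\bm{x}^k \in \Omega_{\bm{w}^k}$.

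Next I would invoke the variational characterization of the projection. Writing $\bm{x}^{k+1} = P_{\Omega_{\bm{w}^k}}(\bar{\bm{y}})$, optimality of $\bm{x}^{k+1}$ yields $\langle \bar{\bm{y}} - \bm{x}^{k+1}, \bm{z} - \bm{x}^{k+1}\rangle \le 0$ for all $\bm{z} \in \Omega_{\bm{w}^k}$. Taking $\bm{z} = \bm{x}^k$ (feasible by the previous step) gives $\langle \bm{x}^{k+1} - \bar{\bm{y}}, \bm{x}^k - \bm{x}^{k+1}\rangle \ge 0$. Expanding $\|\bm{x}^k - \bar{\bm{y}}\|_2^2 = \|\bm{x}^k - \bm{x}^{k+1}\|_2^2 + 2\langle \bm{x}^k - \bm{x}^{k+1}, \bm{x}^{k+1} - \bar{\bm{y}}\rangle + \|\bm{x}^{k+1} - \bar{\bm{y}}\|_2^2$ and discarding the nonnegative cross term yields exactly inequality \eqref{eq: sum1}, proving (i); monotonicity of $\{\|\bm{x}^k - \bar{\bm{y}}\|_2^2\}$ is then immediate since the right-hand side is nonnegative.

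For (ii) I would simply telescope. Summing \eqref{eq: sum1} over $k = 0,\ldots,K$ collapses the right-hand differences to $\|\bm{x}^0 - \bar{\bm{y}}\|_2^2 - \|\bm{x}^{K+1} - \bar{\bm{y}}\|_2^2 \le \|\bm{x}^0 - \bar{\bm{y}}\|_2^2$, a bound independent of $K$; letting $K \to \infty$ gives $\sum_{k=0}^{+\infty} \|\bm{x}^{k+1} - \bm{x}^k\|_2^2 < +\infty$, whence the summand tends to $0$. There is no genuine obstacle here; the one point requiring care—and the reason \Cref{lemma.wellpose} must be in place as a prerequisite—is verifying that $\bm{x}^k$ belongs to the current feasible region $\Omega_{\bm{w}^k}$, since the descent estimate hinges entirely on being able to compare the projection $\bm{x}^{k+1}$ against the admissible point $\bm{x}^k$.
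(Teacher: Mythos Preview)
Your proposal is correct and follows essentially the same approach as the paper: feasibility of $\bm{x}^k$ for the $k$th subproblem (which the paper simply asserts, relying on \Cref{lemma.wellpose}) plus the variational inequality for projection onto a closed convex set yields \eqref{eq: sum1}, and part (ii) is obtained by telescoping. The only difference is that you spell out the Pythagorean expansion explicitly, whereas the paper cites \cite{burke2015iterative,zarantonello1971projections} for that step.
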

\begin{proof}
	Since $\bm{x}^{k}$ is feasible to \eqref{subproblem1}, the first-order optimality condition for projection onto a closed convex set yields the desired result of statement (i). For example, see \cite[Theorem 2.1 (2)]{burke2015iterative} and \cite{zarantonello1971projections} for the proof details.
	
	As for statement (ii), summing up both sides of~\eqref{eq: sum1} from $k=0$ to $t$ and rearranging, we have 
	\begin{equation*}
		\begin{aligned}
			\sum_{k=0}^t \|\bm{x}^{k+1}- \bm{x}^k\|_2^2 \leq   \sum_{k=0}^t( \|\bm{x}^k-\bar{\bm{y}}\|_2^2 - \|\bm{x}^{k+1} -\bar{\bm{y}}\|_2^2 ) &= \|\bm{x}^0 - \bar{\bm{y}}\|_{2}^{2} - \|\bm{x}^{t+1} - \bar{\bm{y}}\|_2^2\\ &\le \|\bm{x}^0 - \bar{\bm{y}}\|_{2}^{2}< +\infty.
		\end{aligned}
	\end{equation*}
	Letting $t\to\infty$, we conclude that $\|\bm{x}^{k+1}-\bm{x}^k\|_2^2\to 0$ as $k\to +\infty$, completing the proof.
\end{proof}

The next lemma enumerates relevant properties of the primal-dual solution to the subproblem \eqref{subproblem1}.

\begin{lemma}\label{lemma.primal}
	The following statements hold true:
	\begin{itemize}
		\item[(i)]  $\Ical(\bm{x}^k)\ne\emptyset$ and $0 \le x^k_i \le \bar{y}_i$ for any $i\in[n]$ and $k \in \mathbb{N}$. 
		
		\item[(ii)]  $\{\bm{x}^k\}_{k\ge 1}$ is uniformly bounded away from $\bm{0}$. 
		
		\item[(iii)]  $\sum_{i=1}^nw_i^k x_i^{k+1} = \gamma^k$.
		
		\item[(iv)]   $\{\lambda^k\}$ is bounded above.
	\end{itemize}
\end{lemma}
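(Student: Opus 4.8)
The plan is to handle the four claims in sequence, exploiting throughout that $\bm{x}^{k+1}$ is the Euclidean projection of $\bar{\bm{y}}\ge\bm{0}$ onto the weighted $\ell_1$ ball $\Omega_{\bm{w}^k}$ (the $p=1$ instance of \eqref{general.projection}), together with the well-posedness guarantee $\gamma^k>0$ from \Cref{lemma.wellpose}, the monotone descent from \Cref{lem.x-x}, and the strict positivity and finiteness of the weights $w_i^k=p(x_i^k+\epsilon_i^k)^{p-1}$ (finite since $\epsilon_i^k>0$). For the bounds in (i), I would apply \Cref{projection.absolute}(i) directly to the subproblem \eqref{subproblem1} with $p=1$ and $w_i^k\ge 0$, which yields $0\le x_i^{k+1}\le \bar{y}_i$ for all $i$ and $k$. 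Nonemptiness of $\Ical(\bm{x}^k)$ is the statement $\bm{x}^k\ne\bm{0}$; I would get this from the variational characterization of the projection: if $\bm{x}^k=\bm{0}$ were the projection of $\bar{\bm{y}}$, then $\langle\bar{\bm{y}},\bm{z}\rangle\le 0$ for all $\bm{z}\in\Omega_{\bm{w}^{k-1}}$, which is violated by $\bm{z}=t\bm{e}_j$ for a coordinate with $\bar{y}_j>0$ and small $t>0$ (feasible since $\gamma^{k-1}>0$, $w_j^{k-1}>0$). This also follows a posteriori from (iii).

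For (ii), I would combine the monotone decrease of $\{\|\bm{x}^k-\bar{\bm{y}}\|_2\}$ (\Cref{lem.x-x}(i)) with a strict-improvement step. Since $\bm{0}\in\Omega_{\bm{w}^0}$ (because $\gamma^0>0$) while $\bm{0}$ is not the unique projection $\bm{x}^1$ of $\bar{\bm{y}}\ne\bm{0}$, strict convexity gives $\|\bm{x}^1-\bar{\bm{y}}\|_2<\|\bar{\bm{y}}\|_2$. Monotonicity then forces $\|\bm{x}^k-\bar{\bm{y}}\|_2\le\|\bm{x}^1-\bar{\bm{y}}\|_2$ for all $k\ge1$, and the reverse triangle inequality delivers the uniform bound $\|\bm{x}^k\|_2\ge\|\bar{\bm{y}}\|_2-\|\bm{x}^1-\bar{\bm{y}}\|_2>0$.

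The heart of the lemma, and the step I expect to be the main obstacle, is (iii): proving that the linearized constraint is active, i.e. that $\bar{\bm{y}}$ is \emph{strictly} infeasible for $\Omega_{\bm{w}^k}$, namely $\sum_i w_i^k\bar{y}_i>\gamma^k$. I would argue via the defining concavity of $t\mapsto t^p$: linearizing at $x_i^k+\epsilon_i^k$ and evaluating at $\bar{y}_i+\epsilon_i^k$ gives $w_i^k(\bar{y}_i-x_i^k)\ge(\bar{y}_i+\epsilon_i^k)^p-(x_i^k+\epsilon_i^k)^p$. Summing over $i$ and substituting the definition $\gamma^k=\gamma-\sum_i(x_i^k+\epsilon_i^k)^p+\sum_i w_i^k x_i^k$ telescopes the $(x_i^k+\epsilon_i^k)^p$ terms, leaving $\sum_i w_i^k\bar{y}_i-\gamma^k\ge\sum_i(\bar{y}_i+\epsilon_i^k)^p-\gamma\ge\|\bar{\bm{y}}\|_p^p-\gamma>0$, where the last two inequalities use $\epsilon_i^k>0$, monotonicity of $t\mapsto t^p$, and the standing hypothesis $\|\bar{\bm{y}}\|_p^p>\gamma$. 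A short KKT argument then closes the claim: if $\sum_i w_i^k x_i^{k+1}<\gamma^k$, complementarity \eqref{kkt.3.l1} forces $\lambda^{k+1}=0$, whereupon \eqref{eq: subproblem_kkt} reduces to projecting $\bar{\bm{y}}$ onto $\mathbb{R}^n_+$, giving $\bm{x}^{k+1}=\bar{\bm{y}}$ and hence $\sum_i w_i^k x_i^{k+1}=\sum_i w_i^k\bar{y}_i>\gamma^k$, a contradiction.

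Finally, (iv) follows by bounding the ratio \eqref{lambda.l1}. The numerator $\sum_{i\in\Ical(\bm{x}^{k+1})}(\bar{y}_i-x_i^{k+1})$ is at most $\|\bar{\bm{y}}\|_1$ by the bounds in (i). For the denominator, note that for $k\ge1$ we have $x_i^k\le\bar{y}_i\le\|\bar{\bm{y}}\|_\infty$, while $\bm{x}^0$ is a fixed initial point, and $\epsilon_i^k\le\epsilon_i^0$ since the update rule \eqref{update.condition} only decreases $\bm{\epsilon}$; hence $x_i^k+\epsilon_i^k\le B$ for a constant $B>0$ independent of $k$. As $p-1<0$, this gives $w_i^k=p(x_i^k+\epsilon_i^k)^{p-1}\ge pB^{p-1}>0$, and since $\Ical(\bm{x}^{k+1})\ne\emptyset$ by (i) the denominator is at least $pB^{p-1}$. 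Therefore $\lambda^{k+1}\le\|\bar{\bm{y}}\|_1 B^{1-p}/p$ uniformly in $k$, which proves boundedness from above.
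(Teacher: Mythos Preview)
Your proposal is correct and follows essentially the same route as the paper: part (i) via \Cref{projection.absolute} applied to the weighted $\ell_1$ subproblem, part (ii) via monotone descent plus $\bm{x}^1\ne\bm{0}$ and the reverse triangle inequality (the paper packages this last step as the auxiliary problem \eqref{eq.tmp.xhat}, arriving at the same bound), part (iii) via the concavity inequality for $t\mapsto t^p$ showing $\bar{\bm{y}}\notin\Omega_{\bm{w}^k}$ (the paper runs the same inequality inside a contradiction argument rather than establishing strict infeasibility first), and part (iv) by bounding the ratio in \eqref{lambda.l1} using $w_i^k\ge pB^{p-1}$. Your write-up is somewhat more explicit than the paper's in (ii) and (iv), but the arguments coincide.
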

\begin{proof}
	
	\begin{itemize}
		\item[(i)]  For any $k$, $w_i^k < +\infty$ since $\epsilon_i^k > 0$. Therefore, the interior of the   
		feasible set of the projection subproblem \eqref{subproblem1} is not empty. Lemma \ref{projection.absolute} shows that $\bm{0}$ is not the projection 
		of $\bar{\bm{y}}$ onto the weighted $\ell_1$ ball, meaning $\Ical(\bm{x}^{k+1}) \neq \emptyset$. 
		The rest of statement (i) is simply a special case of Lemma \ref{projection.absolute} with $p=1$. 
		
		\item[(ii)] 
		We first show that $\{\bm{x}^k\}_{k\ge \bar k}$ is uniformly bounded away from $\bm{0}$, where $\bar k$ is such that $\bm{x}^{\bar k} \neq \bm{0}$. 
		From Lemma \ref{lem.x-x} (i), we have $\|\bm{x}^k - \bar{\bm{y}} \|_2 \le  \| \bm{x}^{\bar k} - \bar{\bm{y}}\|_2$ for any $k \ge \bar k$. 
		Therefore,  for any $k \ge \bar k$,  $ \|\bm{x}^k\|_2 \ge \|\hat{\bm{x}}\|_2$ with
		\begin{equation}\label{eq.tmp.xhat}
			\hat{\bm{x}} : =\arg \min_{\bm{x}}\{ \|\bm{x}\|_2^2 \mid \| \bm{x} - \bar{\bm{y}}\|_2^2 \le \| \bm{x}^{\bar k} - \bar{\bm{y}}\|_2^2   \}.
		\end{equation}
		It can be easily shown that $ 0< \|\bar{\bm{y}}\|_2 -  \| \bm{x}^{\bar k} - \bar{\bm{y}}\|_2 \leq \|\hat{\bm{x}}\|_2$~(please refer to~\Cref{app_A} for its proof), where the inequality is by $\bm{x}^{\bar k} \ne \bm{0}$ and statement (i). Therefore, $\{\|\bm{x}^k\|_2\}_{k\ge 1}$ is uniformly bounded away from 0.
		
		Now it suffices to show that $\bm{x}^{1} \neq \bm{0}$. If $\bm{x}^0 \neq \bm{0}$, then it is done. 
		If   $\bm{x}^0 = \bm{0}$, then  $ \|\bm{\epsilon}^0 \|_p^p < \gamma$ and the initial subproblem is 
		\begin{equation}\label{eq: (iii)}
			\min_{\bm{x}} \ \frac{1}{2}\|\bm{x}-\bar{\bm{y}}\|_2^2,\quad \st \  \sum_{i=1}^n p(\epsilon_i^0)^{p-1} x_i \le \gamma - \|\bm{\epsilon}^0 \|_p^p, \ \bm{x}\in\mathbb{R}_+^n.
		\end{equation} 
		Consider 
		$\hat{\bm{x}}: = \frac{\gamma - \|\bm{\epsilon}^0\|_p^p}{\sum\limits_{i=1}^n p(\epsilon_i^0)^{p-1} \bar{y}_i} \bar{\bm{y}}$.  It is clear that 
		$\hat{\bm{x}}$ is feasible for \eqref{eq: (iii)} and $\tfrac{1}{2}\|\hat{\bm{x}} - \bar{\bm{y}}\|_2^2 < \tfrac{1}{2}\|\bm{0} - \bar{\bm{y}}\|^2_2$, implying
		$\bm{x}=\bm{0}$ is not optimal for \eqref{eq: (iii)}. Therefore, $\bm{x}^{1}\neq \bm{0}$, completing the proof of statement~(ii). 
		
		\item[(iii)] Suppose by contradiction that 
		the projection of $\bar{\bm{y}}$  onto the weighted $\ell_1$ ball of \eqref{subproblem1} 
		satisfies $\sum_{i=1}^n w_i^k x_i^{k+1} < \gamma^k$,   it happens that 
		$\bar{\bm{y}}$ is also in the weighted $\ell_1$ ball, i.e., $\sum_{i=1}^n w_i^k \bar y_i \le \gamma^k$. Therefore, 
		\begin{equation}
			\begin{aligned}
				\|\bar{\bm{y}}  \|_p^p \!\le\! \|\bar{\bm{y}} +\bm{\epsilon}^k\|_p^p \!\le\! & \ \|\bm{x}^k+\bm{\epsilon}^k\|_p^p + \sum_{i=1}^n p(x_i^k+\epsilon^k_i)^{p-1}(\bar y_i-x_i^k)
				\le \gamma, 
			\end{aligned}
		\end{equation}
		contradicting our assumption that $\|\bar{\bm{y}} \|_p^p > \gamma$. This completes the proof of statement (iii).
		
		\item[(iv)] For any $i\in\Ical(\bm{x}^k)$, 
		$\{w_i^k\}$ is   bounded away from $0$  and 
		$\sum_{i\in\Ical(\bm{x}^k)} (\bar{y}_i - x_i^k)$ is bounded above according to statement (i).
		It follows from  \eqref{lambda.l1} that $\{\lambda^k\}$ is bounded above.
	\end{itemize}
\end{proof}

\subsection{Convergence Results}\label{sec.global}
In this section, we prove that any cluster point of the sequence generated by the IRBP is first-order stationary for~\eqref{projection.lp}. To enforce the convergence of the iterates to the stationary points of the original problem, our goal is to drive $\bm{\epsilon}^k$ to $\bm{0}$ as the algorithm proceeds, so that the relaxed ball $\|\bm{x}+\bm{\epsilon}\|_p\le \gamma$ can approximate the $\ell_p$ ball in the limit. We first show that the update \eqref{update.condition} of $\bm{\epsilon}$  is triggered infinitely many times. Define the set of iterations where $\bm{\epsilon}$ is decreased as
\begin{equation*}
	\Ucal:= \{ k \in \Nb \mid  \|\bm{x}^{k+1} - \bm{x}^k\|_2   \|\text{sign}(\bx^{k+1} - \bx^k) \circ  \bw^{k}\|_{2}^{  \tau }  \le M \}.
\end{equation*}
Then we have the following result, and its proof is provided in~\Cref{app_Trigger-Inf}.
\begin{lemma}\label{lem.eps to 0}
	The updating condition~\eqref{update.condition} is triggered infinitely many times, i.e., $|\Ucal| = +\infty$, \text{indicating} $\lim\limits_{k\to +\infty} \bm{\epsilon}^{k} = \bm{0}$.
\end{lemma}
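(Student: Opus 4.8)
The plan is to argue by contradiction that $|\Ucal| = +\infty$, and then deduce $\bm{\epsilon}^k\to\bm{0}$ from the geometric decrease built into~\eqref{update.condition}. Suppose instead that $|\Ucal| < +\infty$. Then there exists an index $K$ such that for every $k \ge K$ the triggering condition in~\eqref{update.condition} fails, so the ``otherwise'' branch is always selected and the perturbation freezes: $\bm{\epsilon}^{k} = \bm{\epsilon}^{K} =: \bar{\bm{\epsilon}} > \bm{0}$ for all $k \ge K$. The failure of the condition reads
\begin{equation*}
	\|\bm{x}^{k+1} - \bm{x}^k\|_2 \, \|\text{sign}(\bx^{k+1} - \bx^k)\circ \bw^{k}\|_2^{\tau} > M, \qquad \forall\, k \ge K.
\end{equation*}

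The crux is to bound the second factor uniformly from above once $\bm{\epsilon}$ is frozen. With $\bm{\epsilon}^k = \bar{\bm{\epsilon}}$ fixed and $x_i^k \ge 0$ (Lemma~\ref{lemma.primal}(i)), and because $p-1 < 0$ makes $t\mapsto t^{p-1}$ decreasing on $(0,+\infty)$, the weights satisfy $w_i^k = p(x_i^k + \bar\epsilon_i)^{p-1} \le p\,\bar\epsilon_i^{\,p-1}$ for all $k \ge K$. Since the entries of $\text{sign}(\bx^{k+1}-\bx^k)$ lie in $\{-1,0,1\}$, this gives
\begin{equation*}
	\|\text{sign}(\bx^{k+1} - \bx^k)\circ \bw^{k}\|_2 \le \|\bw^k\|_2 \le \Big(\textstyle\sum_{i=1}^n p^2\,\bar\epsilon_i^{\,2(p-1)}\Big)^{1/2} =: W < +\infty.
\end{equation*}
Plugging this into the displayed inequality yields $\|\bm{x}^{k+1} - \bm{x}^k\|_2 > M / W^{\tau} > 0$ for all $k \ge K$, contradicting Lemma~\ref{lem.x-x}(ii), which asserts $\lim_{k\to+\infty}\|\bm{x}^{k+1} - \bm{x}^k\|_2 = 0$. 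Hence $|\Ucal| = +\infty$.

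Finally, I would translate $|\Ucal|=+\infty$ into $\bm{\epsilon}^k\to\bm{0}$. The sequence $\{\bm{\epsilon}^k\}$ is componentwise nonincreasing (each step either keeps it fixed or multiplies it by a factor in $(0,\theta]$), and it is multiplied by a factor at most $\theta$ at each of the infinitely many indices in $\Ucal$. Enumerating $\Ucal = \{k_1 < k_2 < \cdots\}$, after the $j$th triggered update one has $\bm{\epsilon}^k \le \theta^{j}\bm{\epsilon}^0$ componentwise; letting $j\to+\infty$ and using $0<\theta<1$ forces $\bm{\epsilon}^k\to\bm{0}$. The only genuinely delicate point is the uniform upper bound on the weights in the frozen regime; this is exactly where the nonnegativity $x_i^k\ge 0$ and the strict positivity of the frozen $\bar{\bm{\epsilon}}$ are both indispensable, as they prevent $(x_i^k+\epsilon_i^k)^{p-1}$ from blowing up. Note that the precise value of $\tau>1$ plays no role here beyond being a fixed constant; its significance lies in the later complexity analysis.
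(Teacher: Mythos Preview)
Your proof is correct and follows essentially the same route as the paper's: assume the trigger eventually stops, freeze $\bm\epsilon$, use $w_i^k\le p\,\bar\epsilon_i^{\,p-1}$ to bound the weight factor uniformly, and obtain a positive lower bound on $\|\bx^{k+1}-\bx^k\|_2$ that contradicts Lemma~\ref{lem.x-x}(ii). The only cosmetic difference is that the paper restricts the weight-norm to the indices where $x_i^{k+1}\ne x_i^k$, whereas you bound by the full $\|\bw^k\|_2$; both work. You also spell out the passage from $|\Ucal|=+\infty$ to $\bm\epsilon^k\to\bm 0$, which the paper leaves implicit.
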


For ease of presentation, we define four cluster point sets as follows.
\begin{align*}
	\Gamma &:=  \{ \text{cluster points of } \{(\bm{x}^{k}, \lambda^k)\} \},\quad
	\Gamma_\Ucal :=  \{ \text{cluster points of } \{(\bm{x}^{k}, \lambda^k)\}_\Ucal \}, \\
	\Lambda &:= \{ \text{cluster points of } \{\lambda^k\} \}, \qquad\quad\
	\Lambda_\Ucal := \{ \text{cluster points of } \{\lambda^k\}_\Ucal \}.
\end{align*}
The following property shows that for sufficiently large $k$, the zero and nonzero components of $\{\bm{x}^k\}_{\Ucal}$ remain unchanged. For simplicity, we use shorthands  $\Ical^k$ and $\Acal^k$ to represent $\Ical(\bx^k)$ and $\Acal(\bx^k)$, respectively.  Please refer to~\Cref{app_3importantProperty} for its proof.
\begin{proposition}\label{lem.stable_nonzero}
	Assume  $0 \not\in \Lambda_\Ucal$, then the following statements hold:
	\begin{itemize}
		\item[(i)] There exists $\bar{k}$ such that $\Ucal \supseteq \{k\mid k \geq \bar{k}\}$, which implies the update of $\beps$ in \eqref{update.condition} is always triggered for sufficiently large $k$.
		
		\item[(ii)] If $\{w_i^k\}$ is unbounded, then $x_i^k \equiv 0$ for all sufficiently large $k$.  
		If $\{w_i^k\}$ is bounded above, then there exists $\delta_{i} > 0$ such that $x_i^k > \delta_i > 0$ for all sufficiently large $k$. 
		
		\item[(iii)] There exist index sets $\Ical^*\cup \Acal^* = [n]$ such that 
		$\Ical(\bm{x}^*) = \Ical^*$ and $\Acal(\bm{x}^*) = \Acal^*$ for any $(\bm{x}^*,\lambda^{*})\in \Gamma$. 
		Furthermore,  $\Ical(\bm{x}^{k}) = \Ical^*$ and $\Acal(\bm{x}^k) = \Acal^*$  for all sufficiently large $k$.	\end{itemize}
\end{proposition}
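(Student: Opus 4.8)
The three parts are naturally proved in the order (i)$\to$(ii)$\to$(iii), with (i) carrying essentially all of the difficulty; I abbreviate the left-hand side of the update test \eqref{update.condition} by $P_k := \|\bx^{k+1}-\bx^k\|_2\,\|\text{sign}(\bx^{k+1}-\bx^k)\circ\bw^{k}\|_2^{\tau}$. The plan for (i) is a proof by contradiction: suppose $\Ucal$ is not cofinite, so $P_k > M$ for infinitely many $k$. Since \Cref{lem.x-x}(ii) forces $\|\bx^{k+1}-\bx^k\|_2\to 0$, the weight factor $\|\text{sign}(\bx^{k+1}-\bx^k)\circ\bw^{k}\|_2$ must diverge along such a subsequence. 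By the pigeonhole principle there is a fixed coordinate $i$ lying in the changing set $\{j : x_j^{k+1}\neq x_j^k\}$ for infinitely many of these $k$, with $w_i^k\to+\infty$. Because $w_i^k = p(x_i^k+\epsilon_i^k)^{p-1}$ and $p-1<0$, divergence of $w_i^k$ forces $x_i^k+\epsilon_i^k\to 0$, hence $x_i^k\to 0$.

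Next I would split according to the status of $x_i^{k+1}$. If $x_i^{k+1}>0$ infinitely often, the stationarity identity \eqref{kkt.1.l1}, equivalently \eqref{lambda.l1}, gives $\lambda^{k+1}=(\bar{y}_i - x_i^{k+1})/w_i^k \le \bar{y}_i/w_i^k\to 0$, so $0$ is a cluster point of the multiplier sequence; matching this subsequence back into $\Ucal$ (recall $|\Ucal|=+\infty$ by \Cref{lem.eps to 0}) contradicts the standing hypothesis $0\notin\Lambda_\Ucal$. The genuinely delicate case is $x_i^{k+1}=0 < x_i^k$, i.e.\ coordinate $i$ is driven onto the boundary; here the KKT system yields only a lower bound on $w_i^k$ and no multiplier decay. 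Moreover the diagonal contribution of this coordinate, $x_i^k (w_i^k)^{\tau} = p^{\tau} x_i^k (x_i^k+\epsilon_i^k)^{\tau(p-1)}$, actually tends to $0$ once $\tau$ is close to $1$ (so that $1+\tau(p-1)>0$), which shows that a single dropping coordinate cannot by itself sustain $P_k>M$. The divergence must therefore arise from cross terms between a coordinate collapsing to zero and coordinates that remain active. I expect ruling this out — establishing that the aggregate weight norm stays bounded once $\lambda^k$ is uniformly bounded below, and resolving the index shift between the test defining $\Ucal$ (stated at $k$) and the multiplier $\lambda^{k+1}$ — to be the main obstacle of the whole proposition.

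Granting (i), every sufficiently large $k$ lies in $\Ucal$, and combining $0\notin\Lambda_\Ucal$ with the upper bound from \Cref{lemma.primal}(iv) produces $\underline{\lambda}>0$ with $\lambda^k\ge\underline{\lambda}$ for all large $k$. For (ii) I treat the dichotomy directly. If $\{w_i^k\}$ is bounded, say $w_i^k\le W_i$, then $w_i^k=p(x_i^k+\epsilon_i^k)^{p-1}\le W_i$ rearranges (using $p-1<0$) to $x_i^k+\epsilon_i^k\ge (W_i/p)^{1/(p-1)}=:c_i>0$, and since $\epsilon_i^k\to 0$ we obtain $x_i^k\ge c_i/2=:\delta_i>0$ for all large $k$. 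If $\{w_i^k\}$ is unbounded, then at any large $k$ where $w_i^k>\bar{y}_i/\underline{\lambda}$ the identity \eqref{kkt.1.l1} cannot hold with $x_i^{k+1}>0$, forcing $x_i^{k+1}=0$; and once $x_i^{k}=0$ we have $w_i^{k}=p(\epsilon_i^{k})^{p-1}\to+\infty$, which stays above $\bar{y}_i/\underline{\lambda}$, so $x_i^{k}=0$ persists by induction.

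Finally (iii) assembles (ii). I would partition $[n]$ into $\Ical^{*}:=\{i : \{w_i^k\}\text{ is bounded}\}$ and $\Acal^{*}:=\{i : \{w_i^k\}\text{ is unbounded}\}$. By (ii) these sets coincide with $\Ical(\bx^k)$ and $\Acal(\bx^k)$ for all large $k$, so the support is frozen. Passing to any cluster point $(\bx^{*},\lambda^{*})\in\Gamma$, each coordinate in $\Ical^{*}$ inherits $x_i^{*}\ge\delta_i>0$ and each coordinate in $\Acal^{*}$ inherits $x_i^{*}=0$; hence $\Ical(\bx^{*})=\Ical^{*}$ and $\Acal(\bx^{*})=\Acal^{*}$ for every cluster point, which is the claim.
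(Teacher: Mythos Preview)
Your (ii) and (iii) are correct and match the paper closely; the unbounded-$w_i^k$ induction is essentially the paper's, and your bounded-$w_i^k$ argument is a clean equivalent. The gap is exactly where you flag it, in (i), and it is real. Two issues block the contradiction route. First, the indices you produce with $P_k>M$ lie \emph{outside} $\Ucal$, so even when \eqref{kkt.1.l1} yields $\lambda^{k+1}\to 0$ along that subsequence, nothing places the shifted indices $k+1$ inside $\Ucal$; invoking $|\Ucal|=+\infty$ does no work, and the hypothesis $0\notin\Lambda_\Ucal$ is never engaged. Second, the dropping-coordinate case $x_i^{k+1}=0<x_i^k$ is, as you acknowledge, unresolved --- and your own computation shows a single such coordinate cannot sustain $P_k>M$, so the obstruction is genuinely a cross-term phenomenon that your setup does not control.

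The paper abandons contradiction and works forward inside $\Ucal$. The key step uses \eqref{kkt.1.l1} in the opposite direction from yours: for large $k\in\Ucal$ and $i\in\Ical^k$ one has $\bar y_i-x_i^k=\lambda^k\,p(x_i^{k-1}+\epsilon_i^{k-1})^{p-1}\ge\hat\lambda\,p(x_i^{k-1}+\epsilon_i^{k-1})^{p-1}$, so $x_i^k$ cannot be arbitrarily small (otherwise the right side would exceed $\|\bar{\bm{y}}\|_\infty$). This gives a uniform lower bound $x_i^k>\delta_i$ on $\Ical^k$, hence a uniform upper bound on $w_i^k$ there. Separately, $i\in\Acal^k$ gives $w_i^k=p(\epsilon_i^k)^{p-1}$ large enough to force $x_i^{k+1}=0$, so $\Acal^k\subset\Acal^{k+1}$ and the changing set $\{i:x_i^{k+1}\neq x_i^k\}$ sits in $\Ical^k$. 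Then $P_k\le\|\bx^{k+1}-\bx^k\|_2\,\|\bw^k_{\Ical^k}\|_2^{\tau}$ is a vanishing factor times a bounded one, so $P_k<M$; an induction then pushes membership in $\Ucal$ to all subsequent indices. The idea missing from your sketch is to secure the lower bound on nonzero coordinates \emph{first}, using $\lambda^k\ge\hat\lambda$ forwards; once that is in place the support freezes and your ``delicate case'' simply never occurs.
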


The next corollary follows directly from Proposition \ref{lem.stable_nonzero}.
\begin{corollary}\label{Lem: coincidence}
	It holds that 
	\begin{itemize}
		
		\item[(i)] $\Lambda_\Ucal \subseteq \Lambda.$
		
		\item[(ii)] If $0 \not\in \Lambda$, then $ \Lambda = \Lambda_\Ucal.$
		
		\item[(iii)] If $0 \in \Lambda$, then $0 \in \Lambda_\Ucal.$
	\end{itemize}
\end{corollary}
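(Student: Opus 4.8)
The plan is to reduce all three parts to a single structural fact about the index set $\Ucal$ supplied by Proposition \ref{lem.stable_nonzero}(i). Part (i) is the elementary observation that, since $|\Ucal| = +\infty$ by Lemma \ref{lem.eps to 0}, the sequence $\{\lambda^k\}_\Ucal$ is an infinite subsequence of $\{\lambda^k\}$; hence every cluster point of the former is also a cluster point of the latter, giving $\Lambda_\Ucal \subseteq \Lambda$ directly.

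The core of the argument is the implication: if $0 \notin \Lambda_\Ucal$, then $\Lambda = \Lambda_\Ucal$. First I would invoke Proposition \ref{lem.stable_nonzero}(i) under the hypothesis $0 \notin \Lambda_\Ucal$ to obtain some $\bar{k}$ with $\Ucal \supseteq \{k : k \geq \bar{k}\}$; that is, $\beps$ is decreased at every sufficiently large iteration, so only finitely many indices lie outside $\Ucal$. Since the cluster-point set of a sequence depends only on its tail, and the tails of $\{\lambda^k\}_{k\in\Nb}$ and $\{\lambda^k\}_\Ucal$ coincide beyond $\bar{k}$, the two sequences have exactly the same cluster points, i.e., $\Lambda = \Lambda_\Ucal$.

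With this implication in hand, parts (ii) and (iii) follow immediately. For (ii), if $0 \notin \Lambda$ then part (i) gives $0 \notin \Lambda_\Ucal$, and the core implication yields $\Lambda = \Lambda_\Ucal$. For (iii), I would argue by contraposition: the core implication says $0 \notin \Lambda_\Ucal \Rightarrow \Lambda = \Lambda_\Ucal \Rightarrow 0 \notin \Lambda$, whose contrapositive is precisely $0 \in \Lambda \Rightarrow 0 \in \Lambda_\Ucal$.

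The only delicate point, and the place I would be most careful, is matching the hypothesis of Proposition \ref{lem.stable_nonzero}, which requires $0 \notin \Lambda_\Ucal$ rather than $0 \notin \Lambda$. In part (ii) this is supplied by part (i); in part (iii) it is introduced as the assumption of the contrapositive. Beyond this bookkeeping, the proof is routine: no boundedness or optimality estimates are needed here, since all the analytic work is already packaged inside Proposition \ref{lem.stable_nonzero}.
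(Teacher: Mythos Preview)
Your proposal is correct and follows exactly the route the paper intends: the paper gives no explicit proof beyond stating that the corollary ``follows directly from Proposition \ref{lem.stable_nonzero},'' and you have simply unpacked that claim by using Proposition \ref{lem.stable_nonzero}(i) to show the tails of $\{\lambda^k\}$ and $\{\lambda^k\}_\Ucal$ coincide whenever $0\notin\Lambda_\Ucal$. Your handling of the bookkeeping (using part (i) to transfer $0\notin\Lambda$ to $0\notin\Lambda_\Ucal$, and the contrapositive for part (iii)) is sound.
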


Considering a primal-dual pair $(\bm{x}, \lambda)$ with $\bx\ge0$ and $\lambda > 0$, 
we define the following metrics to measure the optimality residuals at  $(\bm{x}, \lambda)$
\begin{equation}\label{eq: termination_rule}
	\begin{aligned}
		\alpha(\bm{x},\lambda)  :=  \sum_{i=1}^n \lvert (\bar{y}_i -x_i ) x_i -  \lambda p x_i^p \rvert,\quad \beta(\bm{x}) :=\left|\sum_{i =1}^n x_i^p  - \gamma\right|.
	\end{aligned}
\end{equation}
It is obvious that $(\bm{x}, \lambda)$  is first-order stationary if and only if  $\alpha(\bm{x}, \lambda)=0$ and $\beta(\bm{x})=0$.

The following property of the optimality residuals is needed in our analysis, and its proof is provided in~\Cref{app_C}.

\begin{lemma}\label{lem.residual bounded}
	There exists 
	$\tilde{\bm{x}}$ with $\tilde{x}_i^k$   between $x_i^k $ and $x_i^{k-1}+\epsilon_i^{k-1}$, $i\in\Ical^k$  such that 
	\begin{equation}\label{residual.bound1} 
		\alpha(\bm{x}^{k},\lambda^{k}) \leq \  \lambda^{k}p (1-p) \Vert[\tilde{\bm{x}}^{k}_{\Ical^k}]^{p-2} \Vert_{2}\Vert\bar{\bm{y}}\Vert_{\infty}\left[ \Vert\bm{x}^{k-1}_{\Ical^k} - \bm{x}^{k}_{\Ical^k}\Vert_{2} +\Vert\bm{\epsilon}^k_{\Ical^k}\Vert_{2}\right].
	\end{equation}
	There exists 
	$\hat{\bm{x}}$ with $\hat{x}_i^k$ between $x_i^k $ and $x_i^{k-1}+\epsilon_i^{k-1}$, $i\in \Ical^k$  such that 
	\begin{equation}\label{residual.bound2}
		\begin{split}
			\beta(\bm{x}^{k})  \leq &\  (\|\bw_{\Ical^{k}}^{k-1} \|_2+p\|[\hat\bx_{\Ical^{k}}^k]^{p-1} \|_2) \Vert\bm{x}^{k-1}_{\Ical^{k}} - \bm{x}^{k}_{\Ical^{k}}\Vert_{2} \\
			&\ + p\|[\hat\bx_{\Ical^{k}}^k]^{p-1} \|_\infty \|\bm{\epsilon}_{\Ical^{k}}^{k-1} \|_1
			+\| [ \beps_{\Acal^{k}}^{k-1} ]^p \|_1 + \| [ \beps_{  \Acal^{k}}^{k-1} ]^{p-1} \circ  {\bm{x}}_{\Acal^{k}}^{k-1}\|_1.
		\end{split}
	\end{equation}
\end{lemma}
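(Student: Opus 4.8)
The plan is to derive both estimates directly from the subproblem optimality system \eqref{eq: subproblem_kkt_real} and the tightness of the relaxed constraint in Lemma \ref{lemma.primal}(iii), turning the resulting per-coordinate differences of $p$-powers into displacement terms via the mean value theorem (MVT). For the bound \eqref{residual.bound1} on $\alpha(\bm{x}^{k},\lambda^{k})$, I first note that each active coordinate $i\in\Acal^{k}$ contributes $0$ to $\alpha$, so only $i\in\Ical^{k}$ survive. For such $i$, the subproblem condition \eqref{kkt.1.l1} read one index back (so that $\bm{x}^{k}$ solves the $(k-1)$-st subproblem) gives $\bar{y}_i-x_i^{k}=\lambda^{k}w_i^{k-1}=\lambda^{k}p(x_i^{k-1}+\epsilon_i^{k-1})^{p-1}$. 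Substituting this into $(\bar{y}_i-x_i^{k})x_i^{k}-\lambda^{k}p(x_i^{k})^{p}$ and factoring out $\lambda^{k}p\,x_i^{k}$ reduces the residual to the difference $(x_i^{k-1}+\epsilon_i^{k-1})^{p-1}-(x_i^{k})^{p-1}$; applying the MVT to $t\mapsto t^{p-1}$ yields a point $\tilde{x}_i^{k}$ between $x_i^{k}$ and $x_i^{k-1}+\epsilon_i^{k-1}$ and a factor $(1-p)(\tilde{x}_i^{k})^{p-2}$ times the displacement $\lvert x_i^{k-1}+\epsilon_i^{k-1}-x_i^{k}\rvert$.

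I would then close \eqref{residual.bound1} by using $x_i^{k}\le\bar{y}_i\le\Vert\bar{\bm{y}}\Vert_{\infty}$ from Lemma \ref{lemma.primal}(i), splitting the displacement by the triangle inequality into $\Vert\bm{x}^{k-1}_{\Ical^{k}}-\bm{x}^{k}_{\Ical^{k}}\Vert_{2}$ and the perturbation term, and applying Cauchy--Schwarz to convert $\sum_{i\in\Ical^{k}}(\tilde{x}_i^{k})^{p-2}\lvert\cdot\rvert$ into $\Vert[\tilde{\bm{x}}^{k}_{\Ical^{k}}]^{p-2}\Vert_{2}$ times the displacement norm, giving exactly the claimed product form.

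For \eqref{residual.bound2} the key first move is to eliminate $\gamma$. Combining $\sum_i w_i^{k-1}x_i^{k}=\gamma^{k-1}$ from Lemma \ref{lemma.primal}(iii) with the definition $\gamma^{k-1}=\gamma-\sum_i(x_i^{k-1}+\epsilon_i^{k-1})^{p}+\sum_i w_i^{k-1}x_i^{k-1}$ yields $\gamma=\sum_i(x_i^{k-1}+\epsilon_i^{k-1})^{p}+\sum_i w_i^{k-1}(x_i^{k}-x_i^{k-1})$. Hence $\sum_i (x_i^{k})^{p}-\gamma$ becomes a sum of terms $(x_i^{k})^{p}-(x_i^{k-1}+\epsilon_i^{k-1})^{p}-w_i^{k-1}(x_i^{k}-x_i^{k-1})$, which I split over $\Ical^{k}$ and $\Acal^{k}$. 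On $\Ical^{k}$, the MVT applied to $t\mapsto t^{p}$ produces points $\hat{x}_i^{k}$ and, after regrouping, the expression $[\,p(\hat{x}_i^{k})^{p-1}-w_i^{k-1}](x_i^{k}-x_i^{k-1})-p(\hat{x}_i^{k})^{p-1}\epsilon_i^{k-1}$; a triangle inequality on the coefficient, Cauchy--Schwarz on the displacement sum, and a H\"older step ($\infty$ versus $1$ norm) on the perturbation sum produce the first line together with the $p\Vert[\hat{\bm{x}}^{k}_{\Ical^{k}}]^{p-1}\Vert_{\infty}\Vert\bm{\epsilon}^{k-1}_{\Ical^{k}}\Vert_{1}$ term. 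On $\Acal^{k}$, where $x_i^{k}=0$, each term collapses to $w_i^{k-1}x_i^{k-1}-(x_i^{k-1}+\epsilon_i^{k-1})^{p}=-(x_i^{k-1}+\epsilon_i^{k-1})^{p-1}[(1-p)x_i^{k-1}+\epsilon_i^{k-1}]$.

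I expect the main obstacle to be this active-set contribution: I must bound $(x_i^{k-1}+\epsilon_i^{k-1})^{p-1}[(1-p)x_i^{k-1}+\epsilon_i^{k-1}]$ purely by the perturbation data $(\epsilon_i^{k-1})^{p}$ and $(\epsilon_i^{k-1})^{p-1}x_i^{k-1}$, so that it matches the last two terms of \eqref{residual.bound2}. This hinges on the elementary facts that $t\mapsto t^{p-1}$ is decreasing, hence $(x_i^{k-1}+\epsilon_i^{k-1})^{p-1}\le(\epsilon_i^{k-1})^{p-1}$, together with $(1-p)x_i^{k-1}+\epsilon_i^{k-1}\le x_i^{k-1}+\epsilon_i^{k-1}$; summing over $\Acal^{k}$ then gives $\Vert[\bm{\epsilon}^{k-1}_{\Acal^{k}}]^{p}\Vert_{1}+\Vert[\bm{\epsilon}^{k-1}_{\Acal^{k}}]^{p-1}\circ\bm{x}^{k-1}_{\Acal^{k}}\Vert_{1}$. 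The remaining routine care concerns correctly tracking the two distinct mean-value points $\tilde{\bm{x}}^{k}$ and $\hat{\bm{x}}^{k}$ and the index on $\bm{\epsilon}$ as it passes through the triangle inequalities.
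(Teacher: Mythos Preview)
Your proposal is correct and follows essentially the same route as the paper: for \eqref{residual.bound1} you use \eqref{kkt.1.l1} (index-shifted), factor out $\lambda^{k}p\,x_i^{k}$, apply the MVT to $t\mapsto t^{p-1}$, then bound via $x_i^{k}\le\|\bar{\bm{y}}\|_{\infty}$, a triangle split, and Cauchy--Schwarz; for \eqref{residual.bound2} you eliminate $\gamma$ through Lemma~\ref{lemma.primal}(iii), split over $\Ical^{k}$/$\Acal^{k}$, apply the MVT to $t\mapsto t^{p}$ on $\Ical^{k}$, and on $\Acal^{k}$ use $(x_i^{k-1}+\epsilon_i^{k-1})^{p-1}\le(\epsilon_i^{k-1})^{p-1}$ together with $(1-p)x_i^{k-1}+\epsilon_i^{k-1}\le x_i^{k-1}+\epsilon_i^{k-1}$, exactly as the paper does. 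The only cosmetic difference is that on $\Ical^{k}$ you regroup into $[p(\hat{x}_i^{k})^{p-1}-w_i^{k-1}](x_i^{k}-x_i^{k-1})-p(\hat{x}_i^{k})^{p-1}\epsilon_i^{k-1}$ before bounding, whereas the paper separates the $w_i^{k-1}$-term from the $p$-power difference at the outset; both paths yield the same final estimate.
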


The main result on the global convergence is stated in the following theorem.
\begin{theorem}\label{thm.opt.u}
	Let $\{(\bm{x}^k, \lambda^k)\}$ be the sequence generated by \Cref{alg.l1}. Then for all $ (\bm{x}^{*}, \lambda^*) \in \Gamma_\Ucal$, $(\bm{x}^*, \lambda^*)$ is first-order stationary for \eqref{projection.lp.+}. Consequently, we have 
	\begin{equation}
		\begin{aligned}
			\lim\limits_{\substack{k\to +\infty \\ k\in \Ucal}} \alpha(\bx^k, \lambda^k)  = 0 \ \textrm{ and } \
			\lim\limits_{\substack{k\to +\infty \\ k\in \Ucal}} \beta(\bx^k) = 0.
		\end{aligned}
	\end{equation}
\end{theorem}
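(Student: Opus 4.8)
The plan is to prove that an arbitrary $(\bm{x}^*,\lambda^*)\in\Gamma_\Ucal$ is stationary by showing that the two optimality residuals $\alpha(\bm{x}^k,\lambda^k)$ and $\beta(\bm{x}^k)$ from \eqref{eq: termination_rule} both tend to $0$ as $k\to+\infty$ with $k\in\Ucal$. Since $\alpha$ and $\beta$ are continuous on $\mathbb{R}^n_+\times\mathbb{R}_+$ (recall $t\mapsto t^p$ is continuous at $0$ for $p>0$) and $(\bm{x}^*,\lambda^*)$ is the limit of some sub-subsequence $\{(\bm{x}^k,\lambda^k)\}_{\Ucal'}$ with $\Ucal'\subseteq\Ucal$, passing to the limit along $\Ucal'$ then yields $\alpha(\bm{x}^*,\lambda^*)=0$ and $\beta(\bm{x}^*)=0$, which by the equivalence noted after \eqref{eq: termination_rule} is precisely the first-order stationarity \eqref{eq:general_kkt1} for \eqref{projection.lp.+} (the signs $\bm{x}^*\ge\bm{0}$ and $\lambda^*\ge0$ being inherited in the limit from $\bm{x}^k\ge\bm{0}$, $\lambda^k\ge0$). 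Thus the entire content is the convergence of the residuals along $\Ucal$, which I would establish by bounding $\alpha(\bm{x}^k,\lambda^k)$ and $\beta(\bm{x}^k)$ via \Cref{lem.residual bounded} and checking that every term on the right-hand sides of \eqref{residual.bound1} and \eqref{residual.bound2} vanishes.

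To make \Cref{lem.residual bounded} exploitable I would first work in the nondegenerate regime $0\notin\Lambda_\Ucal$, where \Cref{lem.stable_nonzero} applies: there exist $\bar{k}$ and fixed index sets $\Ical^*,\Acal^*$ with $\Ical^k=\Ical^*$, $\Acal^k=\Acal^*$ for all $k\ge\bar{k}$, the inactive coordinates obey $x_i^k>\delta_i>0$ (so the weights $w_i^k$ stay bounded there), and the active coordinates obey $x_i^k\equiv 0$. For $\alpha$, I bound it by \eqref{residual.bound1}: the factor $\lambda^k$ is bounded above by \Cref{lemma.primal}(iv); the bracket $\|\bm{x}^{k-1}_{\Ical^k}-\bm{x}^k_{\Ical^k}\|_2+\|\bm{\epsilon}^k_{\Ical^k}\|_2$ vanishes by \Cref{lem.x-x}(ii) together with $\bm{\epsilon}^k\to\bm{0}$ from \Cref{lem.eps to 0}; and, crucially, $\|[\tilde{\bm{x}}^k_{\Ical^k}]^{p-2}\|_2$ stays bounded, because each $\tilde{x}_i^k$ lies between $x_i^k$ and $x_i^{k-1}+\epsilon_i^{k-1}$, both of which are bounded away from $0$ on the stabilized inactive set $\Ical^*$. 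Hence $\alpha(\bm{x}^k,\lambda^k)\to 0$ along $\Ucal$.

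The residual $\beta$ is the harder one and is where the main difficulty lies. Using \eqref{residual.bound2}, the prefactors $\|\bw^{k-1}_{\Ical^k}\|_2$ and $\|[\hat{\bx}^k_{\Ical^k}]^{p-1}\|_2,\ \|[\hat{\bx}^k_{\Ical^k}]^{p-1}\|_\infty$ are again bounded on $\Ical^*$ by the same boundedness away from zero, while $\|\bm{x}^{k-1}_{\Ical^k}-\bm{x}^k_{\Ical^k}\|_2$, $\|\bm{\epsilon}^{k-1}_{\Ical^k}\|_1$ and $\|[\beps^{k-1}_{\Acal^k}]^p\|_1$ all go to $0$. The genuinely delicate term is the last one, $\|[\beps^{k-1}_{\Acal^k}]^{p-1}\circ\bm{x}^{k-1}_{\Acal^k}\|_1$, in which the negative power $p-1<0$ makes $[\beps^{k-1}]^{p-1}$ blow up as $\bm{\epsilon}^k\to\bm{0}$; equivalently, in the primal-feasibility route through \Cref{lemma.primal}(iii) this is the term $\sum_{i\in\Acal^*}w_i^k(x_i^{k+1}-x_i^k)$ with unbounded weights $w_i^k\to+\infty$. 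The resolution is exactly the support stabilization of \Cref{lem.stable_nonzero}: for $k\ge\bar{k}$ one has $x_i^{k-1}=0$ on $\Acal^k=\Acal^*$, so this term is identically zero rather than merely small. Therefore $\beta(\bm{x}^k)\to 0$ along $\Ucal$ as well.

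Combining the two limits with the continuity argument of the first paragraph settles the nondegenerate case. The remaining obstacle is the degenerate case $0\in\Lambda_\Ucal$, where \Cref{lem.stable_nonzero} is unavailable and a subsequence of multipliers tends to $0$. Here I would argue stationarity of the relevant cluster points directly from the subproblem conditions \eqref{kkt.1.l1}: passing to the limit coordinatewise on $\Ical^*$ (where $x_i^*>0$ forces $i\in\Ical^k$ eventually and $w_i^{k-1}\to p(x_i^*)^{p-1}$) yields $(\bar{y}_i-x_i^*)x_i^*=\lambda^* p (x_i^*)^p$, while on $\Acal^*$ this relation is trivial, so $\alpha(\bm{x}^*,\lambda^*)=0$ persists. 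The only quantity requiring separate care is then the feasibility tightness $\sum_i (x_i^*)^p=\gamma$, and I expect reconciling this when the weights are unbounded to be the main technical hurdle of the whole theorem.
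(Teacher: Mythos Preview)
The paper's proof takes a much shorter path and makes neither your case split on $\Lambda_\Ucal$ nor any appeal to \Cref{lem.stable_nonzero}. It simply fixes $(\bm{x}^*,\lambda^*)\in\Gamma_\Ucal$, extracts a subsequence $\Scal\subseteq\Ucal$ with $(\bm{x}^k,\lambda^k)\to(\bm{x}^*,\lambda^*)$, notes $\Ical(\bm{x}^*)\subseteq\Ical^k$ eventually along $\Scal$ and $\Ical(\bm{x}^*)\ne\emptyset$ by \Cref{lemma.primal}(i), and then writes
\[
\alpha(\bm{x}^*,\lambda^*)=\lim_{k\in\Scal}\alpha(\bm{x}^k,\lambda^k)\le\lim_{k\in\Scal}\big[\text{RHS of \eqref{residual.bound1}}\big]=0,
\]
justifying the last equality only by \Cref{lem.x-x}(ii) and \Cref{lem.eps to 0}; the same is asserted verbatim for $\beta$ via \eqref{residual.bound2}. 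The full-sequence limits along $\Ucal$ are then deduced because this holds for every convergent sub-subsequence. Thus the nondegenerate/degenerate dichotomy and the support stabilization you set up are entirely absent from the paper's argument.

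Your more careful treatment of the potentially exploding factors $\|[\tilde{\bm{x}}^k_{\Ical^k}]^{p-2}\|_2$, $\|\bw^{k-1}_{\Ical^k}\|_2$, $\|[\hat{\bm{x}}^k_{\Ical^k}]^{p-1}\|$ and especially $\|[\beps^{k-1}_{\Acal^k}]^{p-1}\circ\bm{x}^{k-1}_{\Acal^k}\|_1$ is well motivated---the paper is silent on why these stay controlled along $\Scal$, and the sums in \eqref{residual.bound1}--\eqref{residual.bound2} range over $\Ical^k$, which can properly contain $\Ical(\bm{x}^*)$. When $0\notin\Lambda_\Ucal$, your use of \Cref{lem.stable_nonzero} cleanly disposes of all of them (stable support, active coordinates identically zero, inactive ones bounded below), and that portion of your argument is correct and more rigorous than what is written in the paper. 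The price is that in the degenerate case $0\in\Lambda_\Ucal$ you cannot invoke \Cref{lem.stable_nonzero}, and you do not close the argument for $\beta(\bm{x}^*)=0$ there---you explicitly flag it as the ``main technical hurdle''. Since the theorem is stated unconditionally on $\Lambda_\Ucal$, this is a genuine gap in your proposal as written: you have established the result only under the extra hypothesis $0\notin\Lambda_\Ucal$, whereas the paper claims (and argues, albeit tersely) the general statement without that hypothesis.
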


\begin{proof}
	Given $(\bx^*, \lambda^*) \in \Gamma_\Ucal$, there exists
	$\Scal\subseteq \Ucal$ such that $\lim\limits_{ \substack{k\to+\infty\\k\in\Scal}} (\bm{x}^k, \lambda^k) = (\bm{x}^*,\lambda^*)$. It is easily seen that there exists $\bar{k} > 0 $ such that $\Ical(\bx^*) \subseteq \Ical(\bx^k)$ for all $k \ge \bar{k} \text{ and } k \in \Scal$. Lemma \ref{lemma.primal} (i) implies that $\Ical(\bx^*)$ is non-empty.
	\begin{equation}
		\begin{aligned}
			\alpha ( \bx^*, \lambda^*) & = \lim\limits_{ \substack{k\to+\infty\\k\in\Scal}} \alpha (\bx^k, \lambda^k) \\
			&  \leq \ \lim\limits_{ \substack{k\to+\infty\\k\in\Scal}} \lambda^{k}p (1-p) \Vert[\tilde{\bm{x}}^{k}_{\Ical^k}]^{p-2} \Vert_{2}\Vert\bar{\bm{y}}\Vert_{\infty}\left[ \Vert\bm{x}^{k-1}_{\Ical^k} - \bm{x}^{k}_{\Ical^k}\Vert_{2} +\Vert\bm{\epsilon}^k_{\Ical^k}\Vert_{2}\right] \\
			& = 0,
		\end{aligned}
	\end{equation}
	where the inequality is by Lemma \ref{lem.residual bounded} and the second equality is by Lemma \ref{lem.x-x} (ii) and Lemma \ref{lem.eps to 0}.
	
	Applying the same argument for $\beta(\bx^k)$, we have 
	$$
	\beta (\bx^*) = \lim\limits_{ \substack{k\to+\infty\\k\in\Scal}} \beta (\bx^k) = 0.$$ 
	Hence $(\bx^*, \lambda^*)$ is stationary for \eqref{projection.lp.+}.
	
	Overall, we have shown for any convergent subsequence $\Scal \subseteq \Ucal$,
	$$
	\lim\limits_{\substack{k\to +\infty \\ k\in \Scal}} \alpha(\bx^k, \lambda^k)  = 0 \ \textrm{ and }\ 
	\lim\limits_{\substack{k\to +\infty \\ k\in \Scal}} \beta(\bx^k) = 0,
	$$
	hence, both $\alpha(\bx^k, \lambda^k)$ and $\beta(\bx^k)$ converge to 0 on $\Ucal$.
\end{proof}

\subsection{Uniqueness of the Cluster Points} \Cref{thm.opt.u} shows the convergence of optimality residuals and all cluster points of $\{\bx^k\}_\Ucal$ are first-order stationary for \eqref{projection.lp.+}. In this subsection, we discuss conditions to guarantee the convergence of the entire sequence $\{\bx^k\}$.

The following lemma states the properties of the cluster set $\Gamma$.
\begin{lemma}\label{prop.property of limits}
	$\Gamma$ is non-empty, compact and connected. For every cluster point $(\bm{x}^*, \lambda^*) \in \Gamma$, they have the same objective value.
\end{lemma}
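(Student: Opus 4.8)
The plan is to verify the four asserted properties — nonemptiness, compactness, connectedness, and constancy of the objective value — essentially separately, letting the topological facts follow from boundedness of the iterates together with the vanishing-step property already in hand, and letting the objective claim follow from monotonicity.

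First I would establish that the whole sequence $\{(\bm{x}^k,\lambda^k)\}$ is bounded: Lemma~\ref{lemma.primal}(i) gives $0\le x_i^k\le\bar y_i$ for all $i$ and $k$, so $\{\bm{x}^k\}$ lies in the box $\prod_{i=1}^n[0,\bar y_i]$, while Lemma~\ref{lemma.primal}(iv) together with $\lambda^k\ge0$ confines $\{\lambda^k\}$ to a bounded interval. Boundedness and the Bolzano–Weierstrass theorem then yield $\Gamma\ne\emptyset$. Compactness is immediate afterwards: the set of cluster points of any sequence is closed, and here it is contained in the same bounded box, hence closed and bounded in $\mathbb{R}^{n+1}$, i.e.\ compact. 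For the constant-objective claim I would observe that $f(\bm{x}^k)=\tfrac12\|\bm{x}^k-\bar{\bm{y}}\|_2^2$ is monotonically nonincreasing by Lemma~\ref{lem.x-x}(i) and bounded below by $0$, hence convergent to some limit $f^\star$; since $f$ is continuous, every $(\bm{x}^*,\lambda^*)\in\Gamma$ satisfies $f(\bm{x}^*)=\lim_k f(\bm{x}^k)=f^\star$, giving the common value.

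For connectedness I would invoke the standard topological fact (the $\omega$-limit-set lemma, classically due to Ostrowski and used in the majorization–minimization analysis of \cite{bolte2016majorization}): the cluster set of a bounded sequence $\{z^k\}$ satisfying $\|z^{k+1}-z^k\|_2\to0$ is compact and connected. Applied to $z^k=(\bm{x}^k,\lambda^k)$, the $\bm{x}$-component is already controlled by Lemma~\ref{lem.x-x}(ii), so the crux — and the step I expect to be the main obstacle — is to prove $|\lambda^{k+1}-\lambda^k|\to0$. I would start from the closed form \eqref{lambda.l1}, writing $\lambda^{k+1}=N_{k+1}/D_{k+1}$ with $N_{k+1}=\sum_{i\in\Ical^{k+1}}(\bar y_i-x_i^{k+1})$ and $D_{k+1}=\sum_{i\in\Ical^{k+1}} w_i^k$. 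The numerator increments are controlled by $\|\bm{x}^{k+1}-\bm{x}^k\|_2\to0$; the work lies in showing that $D_{k+1}$ stays bounded away from $0$ and has vanishing increments, for which I would use $\bm{\epsilon}^k\to\bm{0}$ (Lemma~\ref{lem.eps to 0}) and the local Lipschitzness of $t\mapsto t^{p-1}$ away from the origin.

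The delicate point is that the index sets $\Ical^k$ and the weights $w_i^k$ may change at each step and individual weights can blow up as a coordinate decays, so $|\lambda^{k+1}-\lambda^k|$ is not obviously small coordinate-by-coordinate. I anticipate resolving this by passing to the regime where the combinatorial structure freezes: under the assumption $0\not\in\Lambda_\Ucal$ governing this subsection, Proposition~\ref{lem.stable_nonzero}(iii) fixes $\Ical^k\equiv\Ical^*$ and Proposition~\ref{lem.stable_nonzero}(ii) keeps the surviving coordinates bounded below by some $\delta_i>0$, on which $D_{k+1}$ is trapped in a positive compact interval and both $N_{k+1}-N_k\to0$ and $D_{k+1}-D_k\to0$. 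Then $|\lambda^{k+1}-\lambda^k|=|N_{k+1}D_k-N_kD_{k+1}|/(D_{k+1}D_k)\to0$ follows since the denominator is bounded away from $0$ and the numerator tends to $0$. Carrying this $\lambda$-increment estimate through the changing active/inactive structure is the most technical part of the argument.
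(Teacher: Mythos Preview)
Your skeleton matches the paper's proof exactly: boundedness from Lemma~\ref{lemma.primal}(i),(iv) gives nonemptiness; closed and bounded gives compact; monotonicity from Lemma~\ref{lem.x-x}(i) gives the common objective value; and connectedness is to come from the Ostrowski-type lemma on bounded sequences with vanishing increments. The paper's own argument is terse here, simply citing Lemma~\ref{lem.x-x}(ii) together with external references (\cite[Lemma 2.6]{bauschke2015}, \cite[Theorem 26.1]{ostrowski1973solutions}) and moving on.

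You go further and correctly flag that applying Ostrowski to $z^k=(\bm{x}^k,\lambda^k)$ requires $|\lambda^{k+1}-\lambda^k|\to 0$, not just $\|\bm{x}^{k+1}-\bm{x}^k\|_2\to 0$. But your proposed fix leans on Proposition~\ref{lem.stable_nonzero}, which carries the hypothesis $0\notin\Lambda_\Ucal$, and you assert that this assumption is ``governing this subsection.'' It is not: Lemma~\ref{prop.property of limits} sits in the subsection on uniqueness of cluster points and is stated \emph{unconditionally}; the hypothesis $0\notin\Lambda$ (or $0\notin\Lambda_\Ucal$) enters only in specific results such as Proposition~\ref{lem.stable_nonzero}, Theorem~\ref{Theo: unique_convergence}, and Lemma~\ref{lem: local_complexity_analysis}, each of which states it explicitly. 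Without that hypothesis you lose the stabilization of $\Ical^k$ and the uniform lower bounds $x_i^k>\delta_i$ from Proposition~\ref{lem.stable_nonzero}(ii)--(iii), and your quotient estimate for $|\lambda^{k+1}-\lambda^k|$ breaks down exactly where the weights $w_i^k$ may blow up. So you have spotted a genuine subtlety that the paper's brief proof does not address explicitly, but the repair you sketch establishes only a conditional version of the lemma, not the statement as written.
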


\begin{proof}  
	$\{(\bm{x}^k, \lambda^k)\}$ is bounded by Lemma \ref{lemma.primal} (i) and (iv), hence it has  cluster points. $\Gamma$ is compact and connected by Lemma \ref{lem.x-x} (ii) and \cite[Lemma 2.6]{bauschke2015} \cite[Theorem 26.1]{ostrowski1973solutions}. Lemma \ref{lem.x-x} (i) shows all points in $\Gamma$ have the same objective value. This completes the proof. 
\end{proof}

The following theorem asserts the convergence property of the entire sequence generated by IRBP, and its proof can be found in~\Cref{app_unique_convergence}.
\begin{theorem}\label{Theo: unique_convergence}
	Assume $0 \not\in \Lambda$ and there exists a cluster point $\bm{x}^{*}$ of $\{\bm{x}^{k}\}$ such that $x_i^* \neq [\lambda^* p(1-p)]^{\frac{1}{2-p}},  \forall i\in \Ical^*.$ Then $\lim\limits_{k\to +\infty}\bm{x}^k = \bm{x}^*$.
\end{theorem}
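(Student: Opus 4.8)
The plan is to prove that the set of cluster points $\Gamma$ reduces to the single point $\bm{x}^*$; since $\{\bm{x}^k\}$ is bounded by \Cref{lemma.primal}, a unique cluster point is equivalent to convergence of the whole sequence. First I would bring all the stabilization results into force. The hypothesis $0\notin\Lambda$ together with $\Lambda_\Ucal\subseteq\Lambda$ (\Cref{Lem: coincidence}(i)) gives $0\notin\Lambda_\Ucal$, so \Cref{lem.stable_nonzero} applies. By \Cref{lem.stable_nonzero}(i) the set $\Ucal$ contains every sufficiently large $k$, hence the tail of $\{(\bm{x}^k,\lambda^k)\}$ coincides with its subsequence indexed by $\Ucal$; this yields $\Gamma=\Gamma_\Ucal$, so \Cref{thm.opt.u} shows every point of $\Gamma$ is first-order stationary for \eqref{projection.lp.+}. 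By \Cref{lem.stable_nonzero}(iii) all cluster points share the common support $\Ical^*,\Acal^*$, and by \Cref{prop.property of limits} $\Gamma$ is compact, connected, and $f$ is constant on it. Since $\{\lambda^k\}$ is bounded, I may also assume $(\bm{x}^*,\lambda^*)\in\Gamma$ with $\lambda^*$ the (unique) multiplier at $\bm{x}^*$.

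The reduction step is to convert ``unique cluster point'' into ``$\bm{x}^*$ is isolated in $\Gamma$'': if $\bm{x}^*$ has a neighborhood meeting $\Gamma$ only at $\bm{x}^*$, then $\{\bm{x}^*\}$ is clopen in the connected set $\Gamma$, forcing $\Gamma=\{\bm{x}^*\}$. To set up the isolation argument I would first record that the multiplier is a continuous function of the primal point: summing \eqref{kkt.2} over $i\in[n]$ and using \eqref{kkt.3} gives $\lambda=\langle\bar{\bm{y}}-\bm{x},\bm{x}\rangle/(p\gamma)$, so any cluster point near $\bm{x}^*$ carries a multiplier near $\lambda^*$. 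Writing the componentwise stationarity through $\phi_{\lambda}(t):=t+\lambda p t^{p-1}$, every $i\in\Ical^*$ obeys $\phi_{\lambda^*}(x_i^*)=\bar y_i$, and the nondegeneracy hypothesis $x_i^*\neq(\lambda^* p(1-p))^{1/(2-p)}$ is exactly $\phi_{\lambda^*}'(x_i^*)\neq0$; thus each scalar equation is locally invertible in $x_i$.

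The isolation itself I would obtain by linearizing the reduced stationarity system. Consider $F(\bm{x}_{\Ical^*},\lambda)$ whose components are $x_i+\lambda p x_i^{p-1}-\bar y_i$ for $i\in\Ical^*$ together with $\sum_{i\in\Ical^*}x_i^p-\gamma$; a second cluster point close to $\bm{x}^*$ would be another zero of $F$ with the same support. Its Jacobian at $(\bm{x}^*,\lambda^*)$ is a bordered matrix whose diagonal block $\textrm{Diag}(\phi_{\lambda^*}'(x_i^*))$ is invertible by nondegeneracy; eliminating the primal perturbations expresses each as a scalar multiple of the multiplier perturbation, and substituting into the linearized equality constraint pins the multiplier perturbation down. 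Once the multiplier perturbation is forced to vanish, invertibility of the diagonal block forces the primal perturbations to vanish too, so no distinct cluster point can accumulate at $\bm{x}^*$. Combined with the clopen argument this gives $\Gamma=\{\bm{x}^*\}$, and boundedness of $\{\bm{x}^k\}$ then yields $\bm{x}^k\to\bm{x}^*$.

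I expect the isolation step to be the main obstacle. The nondegeneracy hypothesis cleanly makes the diagonal block nonsingular, but the delicate part is the border, i.e.\ the Schur complement $\sum_{i\in\Ical^*}(x_i^*)^{2(p-1)}/\phi_{\lambda^*}'(x_i^*)$: because $\phi_{\lambda^*}'$ changes sign across the critical radius $(\lambda^* p(1-p))^{1/(2-p)}$, one must rule out degenerate cancellations and certify invertibility of the full bordered system, equivalently that perturbing $\lambda$ off $\lambda^*$ is incompatible with staying on the constraint surface $\sum_{i} x_i^p=\gamma$. This is precisely the place where the active equality \eqref{kkt.3}, the constancy of $f$ on $\Gamma$ from \Cref{prop.property of limits}, and the nondegeneracy condition must be combined.
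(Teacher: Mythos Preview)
Your overall strategy---reduce to ``$\bm{x}^*$ is isolated in $\Gamma$'' and appeal to connectedness---matches the paper's, but the paper takes a much shorter route through the isolation step. It freezes the multiplier at $\lambda^*$ and works with the reduced Lagrangian $\mathcal{L}(\bm{x}_{\Ical^*},\lambda^*)=\tfrac12\|\bm{x}_{\Ical^*}-\bar{\bm{y}}_{\Ical^*}\|_2^2+\lambda^*(\|\bm{x}_{\Ical^*}\|_p^p-\gamma)$ as a function of $\bm{x}_{\Ical^*}$ only. The nondegeneracy hypothesis $x_i^*\neq[\lambda^*p(1-p)]^{1/(2-p)}$ is precisely nonsingularity of the primal Hessian $\bm{I}_{\Ical^*\Ical^*}+\lambda^*p(p-1)\textrm{Diag}([\bm{x}^*_{\Ical^*}]^{p-2})$, and a standard Morse-type fact (cited from Golubitsky--Guillemin) then gives that $\bm{x}^*_{\Ical^*}$ is an isolated critical point of $\mathcal{L}(\cdot,\lambda^*)$; this is asserted to contradict \Cref{prop.property of limits}. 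Thus the paper never forms the bordered $(|\Ical^*|+1)\times(|\Ical^*|+1)$ system and never meets the Schur complement.

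Your own obstacle is real for the route you chose, and the remedy you propose does not clear it: at a stationary point $(\bm{x}^*-\bar{\bm{y}})_{\Ical^*}=-\lambda^*\,p[\bm{x}^*_{\Ical^*}]^{p-1}$, so the first-order consequence of ``$f$ constant on $\Gamma$'', namely $\langle\bm{x}^*-\bar{\bm{y}},\delta\bm{x}\rangle=0$, reduces to $\sum_{i\in\Ical^*}p(x_i^*)^{p-1}\delta x_i=0$, which is exactly the linearized constraint you already imposed---no new information. So your proposal, as written, leaves the invertibility of the full bordered Jacobian genuinely open. Conversely, the paper's shortcut avoids this computation but is terse about why isolation for the \emph{fixed-$\lambda^*$} Lagrangian forces isolation in $\Gamma$, where nearby cluster points carry their own multipliers; your formulation is the more careful one, but it puts the burden precisely where the paper's argument is brief.
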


We have shown that when $0 \notin \Lambda$, $\{\bm{x}^k\}$ converges to a unique point. Next we demonstrate that when $0 \in \Lambda$, to check if a point $(\bm{x}^*, 0)$ is stationary for 
\eqref{projection.lp.+} or not is NP-hard. 

\begin{proposition}[NP-hardness]\label{prop.np} 
	It is NP-hard to determine whether \eqref{projection.lp.+}  has a 
	first-order stationary point  $(\bx^*, \lambda^*)$ with $\lambda^*=0$.
\end{proposition}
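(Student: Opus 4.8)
The plan is to show that a first-order stationary point of \eqref{projection.lp.+} with $\lambda^*=0$ encodes a solution of a partition (subset-sum) instance, so that deciding its existence is at least as hard as an NP-complete problem. First I would unwind the first-order system \eqref{eq:general_kkt1} under the restriction $\lambda^*=0$. Substituting $\lambda=0$ into \eqref{kkt.2} yields $(\bar y_i-x_i)x_i=0$ for every $i\in[n]$, so each coordinate of a candidate stationary point must satisfy $x_i^*\in\{0,\bar y_i\}$; combined with the boundary equation \eqref{kkt.3}, i.e. $\sum_{i=1}^n (x_i^*)^p=\gamma$, this shows that \eqref{projection.lp.+} admits a stationary point with $\lambda^*=0$ if and only if there is an index set $S\subseteq[n]$ with $\sum_{i\in S}\bar y_i^{\,p}=\gamma$, realized by $x_i^*=\bar y_i$ for $i\in S$ and $x_i^*=0$ otherwise. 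The sign requirement $v_i=\bar y_i-x_i^*\ge 0$ in the normal cone of Proposition~\ref{Pro.normal} is automatic here, since $x_i^*\in\{0,\bar y_i\}$ and $\bar y_i\ge0$, so no additional constraint is imposed.

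The second step is to recognize this combinatorial condition as PARTITION. Writing $b_i:=\bar y_i^{\,p}$, the question is whether some subset of $\{b_1,\dots,b_n\}$ sums to exactly $\gamma$. I would then construct a reduction from PARTITION, which is NP-complete: given positive integers $c_1,\dots,c_n$ with $\sum_{i=1}^n c_i=2C$, fix any $p\in(0,1)$ and set $\bar y_i:=c_i^{1/p}$ and $\gamma:=C$. Then $\bar y_i^{\,p}=c_i$, the standing assumption $\|\bar{\bm y}\|_p^p=\sum_{i=1}^n c_i=2C>C=\gamma$ is satisfied automatically, and by the first step the instance \eqref{projection.lp.+} possesses a $\lambda^*=0$ stationary point if and only if the $c_i$ split into two blocks of equal sum $C$. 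Thus any procedure that decides the $\lambda^*=0$ stationarity question also decides PARTITION, establishing NP-hardness, and in fact uniformly over every fixed $p\in(0,1)$.

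The one point requiring care is the representation of the constructed data, since $\bar y_i=c_i^{1/p}$ is in general irrational; this is where I expect the only delicacy to lie, rather than in the optimality analysis itself. The saving observation is that the reduction never manipulates $\bar y_i$ directly: the only quantities it compares are the sums $\sum_{i\in S}\bar y_i^{\,p}=\sum_{i\in S}c_i$, which are integers, so all comparisons against $\gamma=C$ are exact and no rounding enters. I would therefore phrase the result either in the real-number model of computation, where each $\bar y_i$ is a single real datum and the map is trivially polynomial, or note equivalently that $\bar y_i$ may be carried symbolically as $c_i^{1/p}$, a description whose size is polynomial in the bit-lengths of $c_i$ and $p$ and which is all the projection problem \eqref{projection.lp.+} ever needs. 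Once the $\lambda^*=0$ branch of \eqref{eq:general_kkt1} is read off, the reduction itself is immediate.
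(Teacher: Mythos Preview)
Your proof is correct and follows essentially the same route as the paper: reduce from a subset-sum–type problem by setting $\bar y_i$ to be the $p$-th root of the input numbers, so that the $\lambda^*=0$ branch of \eqref{eq:general_kkt1} collapses to the subset-sum condition $\sum_{i\in S}\bar y_i^{\,p}=\gamma$. The paper invokes subset-sum over positive reals directly, whereas you specialize to integer \textsc{Partition}; your choice has the small advantage that the standing assumption $\|\bar{\bm y}\|_p^p>\gamma$ is automatically verified, and your discussion of the representation issue for the irrational data $c_i^{1/p}$ is a point the paper does not address.
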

\begin{proof}
	We reduce the NP-hard subset-sum problem \cite{toth1990knapsack,wojtczak2018strong} to our problem. Consider a set of positive real numbers 
	$\bm{s} = \{ s_1,\ldots, s_n\}$ and   $\gamma > 0$.  
	The subset sum problem is to determining whether there exists a subset $s_i, i\in \Ical $ satisfying 
	$\sum_{i\in \Ical } s_i = \gamma$. Obviously, this problem can be reduced to the problem of determining whether 
	\eqref{eq:general_kkt1} with  $\bar{y}_{i} :=  s_i^{1/p}$, $i \in [n]$   has a 
	solution  $(\bx^*, \lambda^*)$ with $\lambda^*=0$. Hence, to determine whether \eqref{projection.lp.+}  has a 
	first-order stationary point  $(\bx^*, \lambda^*)$ with $\lambda^*=0$ is NP-hard.
\end{proof}

\subsection{Local Complexity Analysis} 
This subsection establishes a $O(1/\sqrt{k})$ convergence rate for the optimality residuals in an ergodic sense.

The proof of the following local complexity result can be found in~\Cref{app_local_complexity}.

\begin{lemma}\label{lem: local_complexity_analysis}
	Assume $0 \not\in \Lambda$. Then there exist $\bar{k}\in\Ucal$ and positive constants $C_\alpha$ and $C_\beta $ such that for all $k \ge \bar{k}$
	$$ \min_{\bar{k} \leq t \leq \bar{k} + k} \alpha(\bm{x}^{t}, \lambda^{t}) \leq \sqrt{C_\alpha/k}  \ \text{ and }\  \min_{\bar{k} \leq t \leq \bar{k} + k} \beta(\bm{x}^{t})  \leq \sqrt{C_\beta /k}.$$
\end{lemma}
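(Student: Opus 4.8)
The plan is to reduce the claimed $O(1/\sqrt{k})$ rate to the square-summability already established in Lemma~\ref{lem.x-x}~(ii), after showing that the coefficients appearing in the residual bounds of Lemma~\ref{lem.residual bounded} are eventually uniformly bounded. First I would exploit the hypothesis $0\notin\Lambda$. By Corollary~\ref{Lem: coincidence}~(ii) this gives $\Lambda=\Lambda_\Ucal$, so $0\notin\Lambda_\Ucal$ and Proposition~\ref{lem.stable_nonzero} applies. From it I extract three facts valid for all $k$ past some $\bar{k}\in\Ucal$: first, every such $k$ lies in $\Ucal$, so the update $\bm{\epsilon}^{k+1}\in(\bm{0},\theta\bm{\epsilon}^k]$ is always triggered and $\bm{\epsilon}^k$ decays geometrically; second, the index sets are frozen, $\Ical^k=\Ical^*$ and $\Acal^k=\Acal^*$; third, on $\Ical^*$ the components $x_i^k$ are bounded away from $0$ (and above, by Lemma~\ref{lemma.primal}~(i)), while on $\Acal^*$ one has $x_i^k=0$.

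Next I would use these facts to uniformize the bounds of Lemma~\ref{lem.residual bounded}. Since the auxiliary points $\tilde{\bm{x}}^k,\hat{\bm{x}}^k$ lie between $\bm{x}^k$ and $\bm{x}^{k-1}+\bm{\epsilon}^{k-1}$, on $\Ical^*$ they remain in a fixed compact interval bounded away from $0$, so $\|[\tilde{\bm{x}}^k_{\Ical^k}]^{p-2}\|_2$, $\|[\hat{\bm{x}}^k_{\Ical^k}]^{p-1}\|_2$ and $\|\bm{w}^{k-1}_{\Ical^k}\|_2$ are all bounded; together with the boundedness of $\{\lambda^k\}$ (Lemma~\ref{lemma.primal}~(iv)) and of $\bar{\bm{y}}$, this produces constants $C_1,C_2,C_3$ with
$$\alpha(\bm{x}^k,\lambda^k)\le C_1\bigl(\|\bm{x}^{k-1}-\bm{x}^k\|_2+\|\bm{\epsilon}^k\|_2\bigr),\qquad
\beta(\bm{x}^k)\le C_2\|\bm{x}^{k-1}-\bm{x}^k\|_2+C_3\|\bm{\epsilon}^{k-1}\|_1+\textstyle\sum_{i\in\Acal^*}(\epsilon_i^{k-1})^p$$
for all $k\ge\bar{k}$, where the mixed term $\|[\bm{\epsilon}^{k-1}_{\Acal^*}]^{p-1}\circ\bm{x}^{k-1}_{\Acal^*}\|_1$ vanishes because $x_i^{k-1}=0$ on $\Acal^*$.

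Finally I would square both bounds via $(a+b)^2\le 2(a^2+b^2)$ and sum from $t=\bar{k}$ to $\bar{k}+k$. The displacement terms sum to a finite constant by Lemma~\ref{lem.x-x}~(ii), while each $\bm{\epsilon}$-contribution is squared-geometric in $t$ (with ratios $\theta^2$ and $\theta^{2p}$) by the geometric decay of $\bm{\epsilon}^k$, hence also summable over all $t\ge\bar{k}$. This yields constants $C_\alpha,C_\beta$ independent of $k$ with $\sum_{t=\bar{k}}^{\bar{k}+k}\alpha(\bm{x}^t,\lambda^t)^2\le C_\alpha$ and $\sum_{t=\bar{k}}^{\bar{k}+k}\beta(\bm{x}^t)^2\le C_\beta$. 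Since the minimum of the $k+1$ nonnegative terms is at most their average, $(\min_t\alpha)^2=\min_t\alpha^2\le C_\alpha/(k+1)\le C_\alpha/k$, giving $\min_t\alpha(\bm{x}^t,\lambda^t)\le\sqrt{C_\alpha/k}$, and identically for $\beta$.

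The main obstacle is the second step: guaranteeing that the factor $\|[\tilde{\bm{x}}^k_{\Ical^k}]^{p-2}\|_2$ stays bounded, since the exponent $p-2<0$ would make it blow up should any $\tilde{x}_i^k\to 0$. This is precisely where the assumption $0\notin\Lambda$ is indispensable, as it is what forces (through Proposition~\ref{lem.stable_nonzero}) the surviving components on $\Ical^*$ to remain uniformly positive and the index sets to stabilize; without it the residual coefficients need not be bounded and the ergodic average argument would break down.
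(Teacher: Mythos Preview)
Your proposal is correct and follows essentially the same route as the paper: invoke Proposition~\ref{lem.stable_nonzero} to freeze the index sets and bound the coefficients in Lemma~\ref{lem.residual bounded}, square the residual inequalities, and exploit summability of $\|\bm{x}^{k}-\bm{x}^{k-1}\|_2^2$ together with the geometric decay of $\bm{\epsilon}^k$ after $\bar{k}$. The only cosmetic difference is that the paper telescopes via the descent inequality~\eqref{eq: sum1} explicitly to obtain a concrete constant involving $\|\bm{x}^{\bar{k}-1}_{\Ical^*}-\bar{\bm{y}}_{\Ical^*}\|_2^2-\|\bm{x}^*_{\Ical^*}-\bar{\bm{y}}_{\Ical^*}\|_2^2$, whereas you appeal directly to the summability in Lemma~\ref{lem.x-x}~(ii); your extra care in routing the hypothesis $0\notin\Lambda$ through Corollary~\ref{Lem: coincidence} to reach $0\notin\Lambda_\Ucal$ before invoking Proposition~\ref{lem.stable_nonzero} is in fact a detail the paper leaves implicit.
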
 

\subsection{Worst-case Complexity Analysis}
The purpose of this subsection is to analyze the worst-case complexity of IRBP. A primal-dual pair $(\tilde\bx,\tilde\lambda)\in\mathbb{R}^{n}\times\mathbb{R}_{+}$ is an $\varepsilon$-optimal solution to \eqref{projection.lp.+} if 
\begin{equation}\label{eps opt} 
	\alpha(\tilde\bx,\tilde\lambda) \le \varepsilon\ \text{ and } \ \beta(\tilde\bx) \le \varepsilon
\end{equation}
with given tolerance $\varepsilon > 0$. 

The iteration complexity of~\Cref{alg.l1} is established in the following theorem.

\begin{theorem}\label{main thm complexity} Given tolerance $\varepsilon > 0$. Let $\bm{\epsilon}\in\mathbb{R}^n_{++}$ satisfy the following
	\begin{equation}\label{choose eps}
		\|\bm{\epsilon}\|_p^p < \gamma,~ p\Vert\bar{\bm{y}}\Vert_{1}(\Vert\bar{\bm{y}}\Vert_{\infty} + \Vert\bm{\epsilon}\Vert_{\infty})^{1-p}\Vert\beps\Vert_{p}^{p}\leq\varepsilon/2 ~\text{and}~   2^p \|\bm{\epsilon}\|_p^p \le \varepsilon/2.
	\end{equation} 
	Assume that $\{(\bx^k, \beps^k)\}$ is generated by \Cref{alg.l1} with
	initial relaxation vector $\beps^0 = \beps$, and the relaxation vector is kept 
	fixed so that $\beps^k = \beps$ for all $k\in \mathbb{N}$.  Then in at most 
	$O(1/\varepsilon^2)$ iterations, 
	$(\bx^k, \beps^k)$ is an $\varepsilon$-optimal solution 
	to \eqref{projection.lp.+}.
\end{theorem}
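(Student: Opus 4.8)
The plan is to show that, with the relaxation frozen at $\beps$, each optimality residual at iterate $k$ splits into a \emph{displacement} part (vanishing along the iterations) plus a fixed \emph{perturbation} part, and then to allocate $\varepsilon/2$ to each: the conditions \eqref{choose eps} force the perturbation part below $\varepsilon/2$, while the square-summability of $\{\|\bx^{k+1}-\bx^k\|_2\}$ forces the displacement part below $\varepsilon/2$ within $O(1/\varepsilon^2)$ steps. First I would collect the ingredients that survive for fixed $\beps$. The choice $\|\beps\|_p^p<\gamma$ together with $\bx^0=\bm{0}$ makes the initialization admissible, so \Cref{lemma.wellpose} gives $\|\bx^k+\beps\|_p^p\le\gamma$ for every $k$; \Cref{lem.x-x} gives $\sum_{k\ge 0}\|\bx^{k+1}-\bx^k\|_2^2\le\|\bx^0-\bar{\bm{y}}\|_2^2=:D_0$; and \Cref{lemma.primal} gives $0\le x_i^k\le \bar{y}_i$, $\Ical(\bx^k)\neq\emptyset$, the tight subproblem constraint, and, via \eqref{lambda.l1}, the explicit multiplier bound $\lambda^k\le \tfrac{1}{p}\|\bar{\bm{y}}\|_1(\|\bar{\bm{y}}\|_\infty+\|\beps\|_\infty)^{1-p}$ (numerator $\le\|\bar{\bm{y}}\|_1$, denominator $\ge p(\|\bar{\bm{y}}\|_\infty+\|\beps\|_\infty)^{p-1}$ since at least one index is inactive and $x_i^{k-1}+\epsilon_i\le\|\bar{\bm{y}}\|_\infty+\|\beps\|_\infty$).

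For $\alpha$, I would not invoke \Cref{lem.residual bounded} directly, since its factor $\|[\tilde{\bm x}^k]^{p-2}\|_2$ can blow up; instead I re-estimate exploiting that $\beps$ stays bounded away from $\bm 0$. Using the subproblem stationarity \eqref{kkt.1.l1}, $\bar{y}_i-x_i^k=\lambda^k p(x_i^{k-1}+\epsilon_i)^{p-1}$ on $\Ical^k$, each summand of $\alpha$ equals
\[
\bigl|(\bar{y}_i-x_i^k)x_i^k-\lambda^k p(x_i^k)^p\bigr|=\lambda^k p\,x_i^k\bigl|(x_i^{k-1}+\epsilon_i)^{p-1}-(x_i^k)^{p-1}\bigr|,
\]
and inserting $(x_i^k+\epsilon_i)^{p-1}$ separates displacement from perturbation. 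For the displacement part both arguments $x_i^{k-1}+\epsilon_i$ and $x_i^k+\epsilon_i$ exceed $\epsilon_i$, so the mean value theorem on $t\mapsto t^{p-1}$ yields the uniformly finite coefficient $\epsilon_i^{p-2}$ (this is exactly where the fixed positive $\beps$ prevents the blow-up), giving a bound $C\,\|\bx^k-\bx^{k-1}\|_2$. For the perturbation part I would use the elementary inequality $x_i^k\bigl[(x_i^k)^{p-1}-(x_i^k+\epsilon_i)^{p-1}\bigr]\le\epsilon_i^p$, which follows from $a^p-a(a+b)^{p-1}=a^p-(a+b)^p+b(a+b)^{p-1}\le b^p$, together with the multiplier bound above; summing over $i$ and using $\sum_i\epsilon_i^p=\|\beps\|_p^p$ produces a perturbation term of order $\|\bar{\bm{y}}\|_1(\|\bar{\bm{y}}\|_\infty+\|\beps\|_\infty)^{1-p}\|\beps\|_p^p$, which after tightening the constant is exactly the quantity \eqref{choose eps} drives below $\varepsilon/2$. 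For $\beta$, since $\sum_i(x_i^k)^p\le\sum_i(x_i^k+\epsilon_i)^p\le\gamma$ by \Cref{lemma.wellpose}, we have $\beta(\bx^k)=\gamma-\sum_i(x_i^k)^p$; rewriting $\gamma$ through the tight linearized constraint ($\sum_i w_i^{k-1}x_i^k=\gamma^{k-1}$ from \Cref{lemma.primal}) and splitting off $\sum_i[(x_i^k+\epsilon_i)^p-(x_i^k)^p]$, the concavity of $t\mapsto t^p$ (tangent lies above the graph) bounds the linearization error by a displacement term, while subadditivity $(a+b)^p\le a^p+b^p$ bounds the remaining perturbation by a multiple of $\|\beps\|_p^p$, controlled by $2^p\|\beps\|_p^p\le\varepsilon/2$ in \eqref{choose eps}.

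Finally I would convert the summability into a rate. Telescoping \Cref{lem.x-x}(i) gives $\min_{1\le t\le K}\|\bx^t-\bx^{t-1}\|_2^2\le D_0/K$, so at the best index $t^\star\le K$ the (linear) displacement contributions to both residuals are at most $C\sqrt{D_0/K}$ and the quadratic ones at most $C'D_0/K$; choosing $K=O(1/\varepsilon^2)$ makes both $\le\varepsilon/2$. Combining with the two perturbation bounds gives $\alpha(\bx^{t^\star},\lambda^{t^\star})\le\varepsilon$ and $\beta(\bx^{t^\star})\le\varepsilon$, i.e.\ an $\varepsilon$-optimal pair is produced within $O(1/\varepsilon^2)$ iterations. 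The main obstacle is the re-derivation of the residual estimates in the second step: obtaining the displacement/perturbation split with coefficients that are genuinely finite and with the precise perturbation factors of \eqref{choose eps}. The decisive point is that the frozen, strictly positive $\beps$ uniformly lower-bounds every argument entering the $(\cdot)^{p-1}$ and $(\cdot)^{p-2}$ terms, so the coefficients multiplying $\|\bx^k-\bx^{k-1}\|_2$ remain bounded even when some $x_i^k\to 0$, which is precisely the difficulty the vanishing-$\beps$ analysis of \Cref{sec.global} had to circumvent.
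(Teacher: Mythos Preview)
Your proposal is correct and follows essentially the same route as the paper: the paper formalizes your displacement/perturbation split by introducing relaxed residuals $\alpha_{\beps},\beta_{\beps}$, with \Cref{Lem: approximation_is_controlled} bounding $|\alpha-\alpha_{\beps}|$ and $|\beta-\beta_{\beps}|$ by the perturbation quantities in \eqref{choose eps} and \Cref{alphabeta} bounding $\alpha_{\beps},\beta_{\beps}$ by $\|\bx^{k}-\bx^{k\pm1}\|_2$ with finite coefficients thanks to the frozen $\beps$. The telescoping in \Cref{Lemma: the maximum iteration} then gives the $O(1/\varepsilon^2)$ count exactly as in your final step.
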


The proof of such a result depends upon a few preliminary results. To proceed, we first define the relaxed optimal residuals with relaxation vector $\beps$ as 
\begin{equation}\label{relaxed residual}
	\begin{aligned} 
		\alpha_{\beps}(\bx,\lambda) =  &\ \sum_{i=1}^n | ( \bar y_i - x_i)x_i-  \lambda p (x_i+\epsilon_i)^{p-1}x_i|, \\ 
		\beta_{\beps}(\bx) = &\  \left| \sum_{i=1}^{n} (x_{i} + \epsilon_{i})^{p}- \gamma\right|.
	\end{aligned}
\end{equation}

In the following analysis, we first  show that 
$ \alpha_{\beps}(\bx,\lambda) $ and $  \beta_{\beps}(\bx) $ can be used to 
approximate $ \alpha(\bx,\lambda) $ and $  \beta(\bx) $, respectively. Moreover, the approximation errors can be controlled by $\beps$ as shown in the following lemma and its proof is provided in~\Cref{app_D}.
\begin{lemma}\label{Lem: approximation_is_controlled}
	Suppose $(\bx, \bm{\epsilon})$ with $\bm{0}\le \bx\le \bar{\bm{y}}$, $\bm{\epsilon}\in\mathbb{R}^n_{++}$ 
	and $\|\bm{\epsilon}\|_p^p < \gamma$. It then satisfies
	\begin{equation*}\label{eq: residual_dimishes}
		|\alpha(\bx, \lambda)-\alpha_{\beps}(\bx, \lambda)| \le \lambda p  \|\beps\|_p^p \quad\text{and}\quad \vert\beta(\bx) - \beta_{\beps}(\bx)\vert <  2^p \|\bm{\epsilon}\|_p^p.
	\end{equation*}
\end{lemma}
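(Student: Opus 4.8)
The plan is to bound both residual gaps coordinate by coordinate and reduce everything to the quantity $\|\beps\|_p^p=\sum_{i=1}^n\epsilon_i^p$. For the first inequality I would apply the reverse triangle inequality $\bigl||a|-|b|\bigr|\le|a-b|$ inside each summand, taking $a=(\bar y_i-x_i)x_i-\lambda p x_i^p$ and $b=(\bar y_i-x_i)x_i-\lambda p(x_i+\epsilon_i)^{p-1}x_i$. The common term $(\bar y_i-x_i)x_i$ cancels in $a-b$, so
\[
|\alpha(\bx,\lambda)-\alpha_\beps(\bx,\lambda)|\le\lambda p\sum_{i=1}^n\bigl|x_i^p-x_i(x_i+\epsilon_i)^{p-1}\bigr|,
\]
and the entire first claim collapses to the per-coordinate estimate $x_i^p-x_i(x_i+\epsilon_i)^{p-1}\le\epsilon_i^p$ for every $x_i\ge0$, $\epsilon_i>0$.

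This per-coordinate bound is the part I expect to carry the real content. I would first observe that since $p-1<0$ the map $t\mapsto t^{p-1}$ is decreasing, so $x_i^{p-1}\ge(x_i+\epsilon_i)^{p-1}$, which makes $x_i^p-x_i(x_i+\epsilon_i)^{p-1}\ge0$ and lets me drop the absolute value. The key rewriting is then
\[
x_i(x_i+\epsilon_i)^{p-1}=(x_i+\epsilon_i)^{p-1}\bigl[(x_i+\epsilon_i)-\epsilon_i\bigr]=(x_i+\epsilon_i)^p-\epsilon_i(x_i+\epsilon_i)^{p-1},
\]
so that $x_i^p-x_i(x_i+\epsilon_i)^{p-1}=\bigl(x_i^p-(x_i+\epsilon_i)^p\bigr)+\epsilon_i(x_i+\epsilon_i)^{p-1}$. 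The first parenthesis is $\le0$ because $t\mapsto t^p$ is increasing, while the second satisfies $\epsilon_i(x_i+\epsilon_i)^{p-1}\le\epsilon_i\cdot\epsilon_i^{p-1}=\epsilon_i^p$, again by monotonicity of $t\mapsto t^{p-1}$ applied to $x_i+\epsilon_i\ge\epsilon_i$. Combining yields $x_i^p-x_i(x_i+\epsilon_i)^{p-1}\le\epsilon_i^p$; summing and multiplying by $\lambda p$ gives $|\alpha-\alpha_\beps|\le\lambda p\|\beps\|_p^p$. The case $x_i=0$ is trivial, and only $x_i\ge0$ is actually used, so the upper bound $\bx\le\bar{\bm{y}}$ plays no role here.

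For the second inequality I would again apply the reverse triangle inequality to $\beta$ and $\beta_\beps$ to obtain
\[
|\beta(\bx)-\beta_\beps(\bx)|\le\Bigl|\sum_{i=1}^n x_i^p-\sum_{i=1}^n(x_i+\epsilon_i)^p\Bigr|=\sum_{i=1}^n\bigl((x_i+\epsilon_i)^p-x_i^p\bigr),
\]
the last equality holding because each summand is nonnegative. Invoking the subadditivity inequality $(a+b)^p\le a^p+b^p$, valid for $a,b\ge0$ and $p\in(0,1)$, gives $(x_i+\epsilon_i)^p-x_i^p\le\epsilon_i^p$ termwise, so the sum is at most $\|\beps\|_p^p$. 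Since $p>0$ forces $2^p>1$ and $\|\beps\|_p^p>0$ (because $\beps\in\mathbb{R}^n_{++}$), this already yields the strict estimate $|\beta-\beta_\beps|\le\|\beps\|_p^p<2^p\|\beps\|_p^p$, matching the claim with room to spare.

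In summary, the only genuinely delicate point is the rewriting/monotonicity argument underlying the $\alpha$-term estimate; the $\beta$-term bound is immediate from subadditivity of $t\mapsto t^p$. No compactness, feasibility, or iteration-dependent facts are needed, so the lemma is a self-contained elementary inequality once the coordinate bound $x_i^p-x_i(x_i+\epsilon_i)^{p-1}\le\epsilon_i^p$ is established.
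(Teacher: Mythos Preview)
Your proof is correct and takes a cleaner route than the paper's. The paper proceeds coordinate by coordinate via a case split on whether $x_i\le\epsilon_i$ or $\epsilon_i<x_i\le\bar y_i$; in the small-$x_i$ case it bounds $(x_i+\epsilon_i)^p-x_i^p$ crudely by $(2\epsilon_i)^p$ (whence the factor $2^p$), and in the large-$x_i$ case it invokes Lagrange's mean value theorem for both the $\alpha$- and $\beta$-type terms. Your argument avoids the case distinction entirely: the algebraic identity $x_i(x_i+\epsilon_i)^{p-1}=(x_i+\epsilon_i)^p-\epsilon_i(x_i+\epsilon_i)^{p-1}$ plus the monotonicity of $t\mapsto t^p$ and $t\mapsto t^{p-1}$ handles the $\alpha$-gap uniformly, and subadditivity $(a+b)^p\le a^p+b^p$ immediately gives the sharper estimate $|\beta-\beta_\beps|\le\|\beps\|_p^p$, from which the stated strict bound with the $2^p$ factor follows trivially. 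You are also right that the hypothesis $\bx\le\bar{\bm{y}}$ is never used in your argument; in the paper it enters only to delimit the second case.
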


For ease of exposition throughout this subsection, we assume that the hypotheses of~\Cref{main thm complexity} hold. Next, we prove that $\{\alpha_{\beps}(\bx^k,\lambda^k)\}$ and $ \{\beta_{\beps}(\bx^k) \}$ vanish in the following theorem, and the proof  can be found in~\Cref{app_H}.

\begin{lemma} \label{alphabeta}   
	Let the hypotheses of~\Cref{main thm complexity}   hold and $\{(\bx^k,\lambda^k)\}$ be generated by \Cref{alg.l1}. 
	The following statements hold:
	\begin{enumerate}
		\item[(i)] $0 \le  \lambda^k \le  \kappa_3/p$, 
		\item[(ii)] $\alpha_{\beps}(\bx^k,\lambda^k) \le   \kappa_3 \Vert\bar{\bm{y}}\Vert_{2}   (1-p) \|\beps^{p-2}\|_{2} \| \bx^{k-1} - \bx^k\|_2$,
		\item[(iii)]  $\beta_{\beps}(\bx^k)  \le \ p \|\beps^{p-1}\|_2 \|\bx^{k+1}-\bm{x}^k\|_2,$ 
	\end{enumerate} 
	where  $\kappa_3 : = \frac{\Vert\bar{\bm{y}}\Vert_{1}}{ (\Vert\bar{\bm{y}}\Vert_{\infty} + \Vert\bm{\epsilon}\Vert_{\infty})^{p-1}}$. 
\end{lemma}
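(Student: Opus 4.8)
The plan is to treat the three bounds in order, since (ii) relies on (i) and all three ultimately reduce to combining the subproblem's KKT system with the mean value theorem and Cauchy--Schwarz. Throughout I use that the relaxation vector is frozen, $\beps^k=\beps$, so that $w_i^k=p(x_i^k+\epsilon_i)^{p-1}$, and I repeatedly invoke Lemma~\ref{lemma.primal}(i), namely $0\le x_i^k\le \bar y_i$ and $\Ical(\bx^k)\ne\emptyset$.

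For (i), I start from the closed form \eqref{lambda.l1}, shifted by one index, $\lambda^k=\big(\sum_{i\in\Ical(\bx^k)}(\bar y_i-x_i^k)\big)\big/\big(\sum_{i\in\Ical(\bx^k)}w_i^{k-1}\big)$. Nonnegativity is immediate. For the upper bound I dominate the numerator by $\sum_i\bar y_i=\Vert\bar{\bm y}\Vert_1$ using $x_i^k\ge0$, and I lower-bound the denominator termwise: since $x_i^{k-1}+\epsilon_i\le\Vert\bar{\bm y}\Vert_\infty+\Vert\beps\Vert_\infty$ and $p-1<0$, each $w_i^{k-1}\ge p(\Vert\bar{\bm y}\Vert_\infty+\Vert\beps\Vert_\infty)^{p-1}$, and because $\Ical(\bx^k)$ contains at least one index the whole sum is at least this quantity. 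Dividing gives $\lambda^k\le \Vert\bar{\bm y}\Vert_1/\big(p(\Vert\bar{\bm y}\Vert_\infty+\Vert\beps\Vert_\infty)^{p-1}\big)=\kappa_3/p$ by the definition of $\kappa_3$.

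For (ii), I first discard the active indices: when $x_i^k=0$ the summand of $\alpha_\beps$ vanishes, so only $i\in\Ical(\bx^k)$ matter. On those indices the KKT equation \eqref{kkt.1.l1} gives $\bar y_i-x_i^k=\lambda^k p(x_i^{k-1}+\epsilon_i)^{p-1}$, and substituting this into the summand and factoring out $x_i^k$ yields $x_i^k\lambda^k p\big[(x_i^{k-1}+\epsilon_i)^{p-1}-(x_i^k+\epsilon_i)^{p-1}\big]$. Applying the mean value theorem to $t\mapsto t^{p-1}$ produces a factor $(1-p)\tilde x_i^{\,p-2}|x_i^{k-1}-x_i^k|$ with $\tilde x_i$ between $x_i^{k-1}+\epsilon_i$ and $x_i^k+\epsilon_i$; since both endpoints are at least $\epsilon_i$ and $p-2<0$, I bound $\tilde x_i^{\,p-2}\le\epsilon_i^{p-2}$. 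Summing over $\Ical(\bx^k)$, bounding $x_i^k\le\bar y_i$, and applying Cauchy--Schwarz together with the Hadamard submultiplicativity $\Vert\bm a\circ\bm b\Vert_2\le\Vert\bm a\Vert_2\Vert\bm b\Vert_2$ (applied to $\bar{\bm y}\circ\beps^{p-2}$) collapses the triple sum into $\Vert\bar{\bm y}\Vert_2\Vert\beps^{p-2}\Vert_2\Vert\bx^{k-1}-\bx^k\Vert_2$. Finally I replace $\lambda^k p$ by its upper bound $\kappa_3$ from part (i). For (iii), the key is Lemma~\ref{lemma.primal}(iii): the weighted $\ell_1$ constraint is active at $\bx^{k+1}$, i.e. $\sum_i w_i^k x_i^{k+1}=\gamma^k$. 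Unfolding the definitions $\gamma^k=\gamma-\sum_i(x_i^k+\epsilon_i)^p+\sum_i w_i^k x_i^k$ and $w_i^k=p(x_i^k+\epsilon_i)^{p-1}$ and rearranging turns this equality into the exact identity $\sum_i(x_i^k+\epsilon_i)^p-\gamma=\sum_i p(x_i^k+\epsilon_i)^{p-1}(x_i^k-x_i^{k+1})$, whose left side is $\pm\beta_\beps(\bx^k)$. Bounding $(x_i^k+\epsilon_i)^{p-1}\le\epsilon_i^{p-1}$ and applying Cauchy--Schwarz finishes the estimate.

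The main obstacle is step (ii): one must be careful that the mean value theorem gives the correct intermediate point, that $p-2<0$ reverses the inequality when passing to $\epsilon_i^{p-2}$, and above all that the product $x_i^k\epsilon_i^{p-2}|x_i^{k-1}-x_i^k|$ is regrouped so that the Hadamard inequality yields the clean product of $\ell_2$ norms $\Vert\bar{\bm y}\Vert_2\Vert\beps^{p-2}\Vert_2$ rather than a mixed $\ell_\infty/\ell_2$ bound; the coupling with part (i) to eliminate $\lambda^k$ must also be performed after, not before, the norm estimates.
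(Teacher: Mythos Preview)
Your proposal is correct and follows essentially the same route as the paper's proof: part (i) via the closed form \eqref{lambda.l1} together with the monotonicity of $t\mapsto t^{p-1}$, part (ii) via the KKT identity on $\Ical(\bx^k)$, the mean value theorem for $t\mapsto t^{p-1}$, the bound $\tilde x_i^{\,p-2}\le\epsilon_i^{p-2}$, and the triple-product inequality $\sum_i a_ib_ic_i\le\|\bm a\|_2\|\bm b\|_2\|\bm c\|_2$ (the paper states this as a separate inequality rather than via Hadamard submultiplicativity, but it is the same estimate), and part (iii) via Lemma~\ref{lemma.primal}(iii), the unfolding of $\gamma^k$, and Cauchy--Schwarz after bounding $w_i^k\le p\epsilon_i^{p-1}$. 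Your treatment of (i) is in fact slightly more direct than the paper's, since you bound $x_i^{k-1}+\epsilon_i\le\|\bar{\bm y}\|_\infty+\|\beps\|_\infty$ in one step instead of going through $\bar y_i+\epsilon_i$ first.
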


The following corollary follows directly from Lemma \ref{alphabeta}.
\begin{corollary}
	Given that $\beps$ is a vector of $\epsilon$, i.e., $\beps = (\epsilon,\ldots,\epsilon)^{T}$, we have $\alpha_{\beps}(\bx^k,\lambda^k) \le \frac{\sqrt{n}(1-p)\epsilon^{p-2}\Vert\bar{\bm{y}}\Vert_{1}\Vert\bar{\bm{y}}\Vert_{2}}{(\Vert\bar{\bm{y}}\Vert_{\infty} + \epsilon)^{p-1}}   \| \bx^{k-1} - \bx^k\|_2$.
\end{corollary}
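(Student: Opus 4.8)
The plan is to derive the bound as an immediate specialization of Lemma \ref{alphabeta} (ii) to the uniform relaxation vector $\beps = (\epsilon,\ldots,\epsilon)^{T}$; no new argument is required. Recall that Lemma \ref{alphabeta} (ii) supplies
$$\alpha_{\beps}(\bx^k,\lambda^k) \le \kappa_3 \Vert\bar{\bm{y}}\Vert_{2} (1-p) \|\beps^{p-2}\|_{2} \| \bx^{k-1} - \bx^k\|_2,$$
with $\kappa_3 = \Vert\bar{\bm{y}}\Vert_{1}/(\Vert\bar{\bm{y}}\Vert_{\infty} + \Vert\bm{\epsilon}\Vert_{\infty})^{p-1}$. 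Thus the whole task reduces to evaluating the two $\beps$-dependent quantities $\Vert\bm{\epsilon}\Vert_{\infty}$ and $\|\beps^{p-2}\|_{2}$ in the special case where every coordinate of $\beps$ equals $\epsilon$.

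I would carry out these two evaluations and then substitute. First, $\Vert\bm{\epsilon}\Vert_{\infty} = \epsilon$, so that $\kappa_3 = \Vert\bar{\bm{y}}\Vert_{1}/(\Vert\bar{\bm{y}}\Vert_{\infty} + \epsilon)^{p-1}$. Second, recalling the convention $[\beps]^{p-2} = (\epsilon_1^{p-2},\ldots,\epsilon_n^{p-2})^{T}$ (abbreviated $\beps^{p-2}$ in the lemma), every entry equals $\epsilon^{p-2}$, which is positive because $\epsilon > 0$; hence $\|\beps^{p-2}\|_{2} = \sqrt{\sum_{i=1}^n (\epsilon^{p-2})^2} = \sqrt{n}\,\epsilon^{p-2}$. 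Plugging both identities into the displayed inequality and regrouping the scalar factors produces exactly the asserted bound. There is essentially no obstacle here: the only point meriting a moment of care is the positivity of $\epsilon^{p-2}$, which is what lets one pull it out of the $\ell_2$ norm without an absolute value and thereby recover the clean factor $\sqrt{n}\,\epsilon^{p-2}$.
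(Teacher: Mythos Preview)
Your proposal is correct and matches the paper's approach exactly: the paper states that the corollary ``follows directly from Lemma \ref{alphabeta}'' without giving any further argument, and your specialization of $\kappa_3$ and $\|\beps^{p-2}\|_2$ to the uniform vector is precisely the intended one-line computation.
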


In what follows, we prove the iteration complexity for an approximate critical
point satisfying~\eqref{eps opt}, and the proof can be found in~\Cref{app_I}.
\begin{lemma}\label{Lemma: the maximum iteration}
	Let the hypotheses of~\Cref{main thm complexity}   hold and $\{(\bx^k,\lambda^k)\}$ be generated by \Cref{alg.l1}, and let $\bm{x}^*$ be a cluster point of the sequence $\{\bm{x}^{k}\}_{\mathcal{U}}$. 	Given the tolerance $\varepsilon > 0$, then~\Cref{alg.l1} terminates in finite many iterations to reach $\varepsilon$-optimality, i.e.
	$\max\{\alpha_{\beps}(\bm{x}^{k}, \lambda^k), \beta_{\beps}(\bm{x}^{k})\} < \varepsilon/2$. 
	Furthermore, the maximum iteration number to reach $\varepsilon$-optimality is given by $\max(\kappa_\alpha/\varepsilon^{2},\kappa_\beta/\varepsilon^{2})$ where $\kappa_\alpha$ and $\kappa_\beta$ are constants as defined below
	\begin{equation*}
		\begin{aligned}
			\kappa_\alpha &=  4(1-p)^{2} \kappa_3^2\Vert\bar{\bm{y}}\Vert_{2}^{2} \| \bm{\epsilon}^{p-2}\|_2^{2}(\|\bm{x}^0 - \bar{\bm{y}}\|_{2}^{2} - \|\bm{x}^* - \bar{\bm{y}}\|_2^2), \\
			\kappa_\beta &= 4p^{2} \|\beps^{p-1}\|_{2}^{2}  (\|\bm{x}^0 - \bar{\bm{y}}\|_{2}^{2} - \|\bm{x}^* - \bar{\bm{y}}\|_2^2),
		\end{aligned}
	\end{equation*}
	and $\kappa_3$ is as defined in Lemma \ref{alphabeta}.
\end{lemma}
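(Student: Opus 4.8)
The plan is to convert the per-iteration estimates of \Cref{alphabeta}(ii)--(iii) into a summability statement and then count how many iterates can possibly violate the tolerance. First I would square both inequalities of \Cref{alphabeta}: part (ii) gives
$$\alpha_{\beps}(\bx^k,\lambda^k)^2 \le (1-p)^2\kappa_3^2\|\bar{\bm{y}}\|_2^2\|\beps^{p-2}\|_2^2\,\|\bx^{k-1}-\bx^k\|_2^2,$$
while part (iii) gives $\beta_{\beps}(\bx^k)^2 \le p^2\|\beps^{p-1}\|_2^2\,\|\bx^{k+1}-\bx^k\|_2^2$. Summing each bound over the iteration index and invoking the descent inequality of \Cref{lem.x-x}(i), the displacement terms telescope, so that $\sum_k\|\bx^{k+1}-\bx^k\|_2^2 \le \|\bx^0-\bar{\bm{y}}\|_2^2 - \lim_{k}\|\bx^k-\bar{\bm{y}}\|_2^2$.

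The key supporting step is to evaluate this limit. Since $\{\|\bx^k-\bar{\bm{y}}\|_2^2\}$ is monotonically nonincreasing and bounded below by \Cref{lem.x-x}(i), it converges; and because $\bm{x}^*$ is a cluster point of $\{\bx^k\}_{\Ucal}$, the limit must equal $\|\bx^*-\bar{\bm{y}}\|_2^2$ (consistent with the common objective value guaranteed by \Cref{prop.property of limits}). This identifies the telescoped bound with $\|\bx^0-\bar{\bm{y}}\|_2^2 - \|\bx^*-\bar{\bm{y}}\|_2^2$ and yields $\sum_k\alpha_{\beps}(\bx^k,\lambda^k)^2 \le \kappa_\alpha/4$ and $\sum_k\beta_{\beps}(\bx^k)^2 \le \kappa_\beta/4$ with precisely the constants defined in the statement. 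Finiteness of these two sums already forces $\alpha_{\beps}(\bx^k,\lambda^k)\to 0$ and $\beta_{\beps}(\bx^k)\to 0$, which establishes finite termination.

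For the iteration count I would use a counting (pigeonhole) argument: any iterate with $\alpha_{\beps}(\bx^k,\lambda^k)\ge\varepsilon/2$ contributes at least $\varepsilon^2/4$ to a sum bounded by $\kappa_\alpha/4$, so at most $\kappa_\alpha/\varepsilon^2$ iterates can violate the $\alpha$-tolerance, and symmetrically at most $\kappa_\beta/\varepsilon^2$ iterates can violate the $\beta$-tolerance. Hence an index at which both $\alpha_{\beps}<\varepsilon/2$ and $\beta_{\beps}<\varepsilon/2$ must occur within $\max(\kappa_\alpha/\varepsilon^2,\kappa_\beta/\varepsilon^2)$ iterations. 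The hard part will be exactly this simultaneity: the summability estimates bound the two sets of ``bad'' indices separately, so some care is needed (a union/interleaving count, or an argument restricting attention to the tail where both residuals have already entered their small regime) to exhibit a single common iterate meeting both tolerances while preserving the clean $\max$ form rather than degrading to the sum of the two budgets.

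Finally, I would combine the relaxed bound $\max\{\alpha_{\beps}(\bx^k,\lambda^k),\beta_{\beps}(\bx^k)\}<\varepsilon/2$ with the approximation estimates of \Cref{Lem: approximation_is_controlled} and the admissible choice of $\beps$ in \eqref{choose eps}, which controls $|\alpha-\alpha_{\beps}|$ and $|\beta-\beta_{\beps}|$ by $\varepsilon/2$; this upgrades the relaxed residuals to the true residuals $\alpha(\bx^k,\lambda^k)<\varepsilon$ and $\beta(\bx^k)<\varepsilon$, thereby completing the $O(1/\varepsilon^2)$ iteration-complexity claim needed for \Cref{main thm complexity}.
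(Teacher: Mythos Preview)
Your approach matches the paper's: square the per-iteration bounds from \Cref{alphabeta}(ii)--(iii), sum and telescope the displacements via the descent inequality of \Cref{lem.x-x}(i), replace the limiting objective value by $\|\bx^*-\bar{\bm{y}}\|_2^2$, and then count how many iterates can exceed $\varepsilon/2$. You are in fact more careful than the paper about the simultaneity issue---the paper simply writes $(\varepsilon/2)^2k\le\sum_{t=1}^k\alpha_{\beps}^2$ and concludes $k\le\max(k_\alpha,k_\beta)$ without explaining why a single common index lies within the $\max$ (rather than the sum) of the two budgets---and your final paragraph on upgrading relaxed to true residuals via \Cref{Lem: approximation_is_controlled} and \eqref{choose eps} is exactly the content of the proof of \Cref{main thm complexity}, not of this lemma.
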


Now we prove~\Cref{main thm complexity} in the following.
\begin{proof}
	By Lemma \ref{Lemma: the maximum iteration}, IRBP performs at most $O(1/\varepsilon^{2})$ iterations to reach 
	\begin{equation}\label{eq: part1}
		\alpha_{\beps}(\bx^k,\lambda^k)\le \varepsilon/2  \quad\textrm{and}\quad \beta_{\beps}(\bx^k) \le \varepsilon/2.
	\end{equation}
	On the other hand, by Lemma \ref{Lem: approximation_is_controlled}, Lemma \ref{alphabeta} (i) and~\eqref{choose eps}, we have
	\begin{align}
		&|\alpha(\bx^k, \lambda^k)-\alpha_{\beps}(\bx^k, \lambda^k)| \le  \lambda^k p  \|\beps\|_p^p \leq  \kappa_3 \Vert\beps\Vert_{p}^{p}\leq\varepsilon/2, \label{eq: alpha_varepsilon} \\ 
		&\vert\beta(\bx^k) - \beta_{\beps}(\bx^k)\vert <  2^p \|\bm{\epsilon}\|_{p}^{p} \leq \varepsilon/2.\label{eq: beta_varepsilon}
	\end{align}
	
	Combining~\eqref{eq: part1}, ~\eqref{eq: alpha_varepsilon} and~\eqref{eq: beta_varepsilon}, it gives
	\begin{equation}\label{eq: result1}
		\begin{aligned}
			\alpha(\bx^k, \lambda^k) \leq  | \alpha(\bx^k, \lambda^k) -  \alpha_{\beps}(\bx^k, \lambda^k) | +  \alpha_{\beps}(\bx^k, \lambda^k)  
			\le   \varepsilon/2 + \varepsilon/2 = \varepsilon,
		\end{aligned} 
	\end{equation}
	and similarly, 
	\begin{equation}\label{eq: result2}
		\begin{aligned}
			\beta(\bx^k) \leq   | \beta(\bx^k) -  \beta_{\beps}(\bx^k) | +  \beta_{\beps}(\bx^k)  
			\le  \varepsilon/2 + \varepsilon/2 = \varepsilon.
		\end{aligned} 
	\end{equation}
	This completes the proof.
\end{proof}

\section{Numerical Experiments}\label{sec: exprements}
In this section, we conduct a set of numerical experiments on synthetic data  to demonstrate the efficiency of the proposed IRBP for solving the $\ell_{p}$ ball projection problems. The code is implemented in Python.\footnote{We have made the code publicly available at~\url{https://github.com/Optimizater/Lp-ball-Projection}.} and all experiments are performed on an Intel Core CPU i7-7500U at $2.7$GHz$\times 4$ with $7.5$GB of main memory, running under a Ubuntu $64$-bit laptop.
\subsection{Data Description and Implementation Details}
In all tests, we consider projecting $\bm{y}\in\mathbb{R}^{n}$ onto the unit $\ell_{p}$ ball (i.e., $\gamma = 1$). Motivated by the tests on $\ell_1$-ball projections in \cite{condat2016fast}, the projection vector is generated as $\bm{y}\sim N(\gamma/n, 10^{-3})$ by discarding $\bm{y}$ that lies inside the unit $\ell_{p}$ ball.

The input parameters for~\Cref{alg.l1} are set as 
$$\theta = \textrm{min}(\beta(\bm{x}^{k}),1/\sqrt{k})^{1/p},\  \tau = 1.1\ \text{ and } \ M = 100.$$
Meanwhile, we initialize $\bm{x}^{0} = \bm{0}$ and $\bm{\epsilon}^{0} = 0.9(\tfrac{\gamma }{\Vert\bm{\nu}\Vert_{1}}\bm{\nu})^{1/p}$ such that $\Vert\bm{\epsilon}^{0}\Vert_{p}^{p} < \gamma$, where entries of $\bm{\nu}\in \mathbb{R}^{n}$ are
generated from  i.i.d. $U\left[ 0, 1\right] $. The algorithm is terminated when the following condition is satisfied
\begin{equation}\label{eq: practical_termination}
	\max\{\bar{\alpha}(\bm{x}^{k}, \lambda^{k}),\beta(\bm{x}^{k})\} \leq \delta^{\textrm{tol}}\max(\bar{\alpha}(\bm{x}^{0}, \lambda^{0}), \beta(\bm{x}^{0}), 1), 
\end{equation}
where $\bar{\alpha}(\bm{x}^{k},\lambda^{k}) =  \frac{1}{n} {\alpha}(\bm{x}^{k},\lambda^{k})$ and 
$\delta^{\textrm{tol}} = 10^{-8}$. 

To our knowledge, the root-searching procedure (RS for short in the sequel) proposed in \cite{chen2019outlier} was reported to be the most successful algorithm for $\ell_p$-ball projection so far. Therefore, we deliver a set of performance comparison experiments between the RS and the IRBP.
We summarize the RS method in~\Cref{alo: RS}.

On the other hand,  if RS fails to find an optimal multiplier $\lambda^{*}$ satisfying  $\frac{1}{n} | \|\bx\|_p^p - \gamma | < 10^{-8}$, the run is considered as a failure. Similarly, any time IRBP fails to satisfy~\eqref{eq: practical_termination} within $10^{3}$ iterations, we deem this run as a failure.
\subsection{Test Results}

\textbf{First experiment: A $2$-dimension illustrative example.} In this example, we show the iteration path of IRBP.
\begin{example}
	Given $\gamma = 1$, $ p = 0.5$, and $\bm{y} = \left[0.5, 0.45 \right]^{T}$, then the problem~\eqref{projection.lp} takes the form
	\begin{equation}\label{eq: 2d_example}
		\min_{\bm{x}\in\mathbb{R}^{2}}\ \ \frac{1}{2}\left[ (x_1 - 0.5)^{2} + (x_2 - 0.45)^{2}\right]\quad \textrm{\textrm{\normalfont s.t.}}\ \ \sqrt{|x_{1}|} + \sqrt{|x_{2}|} \leq 1.
	\end{equation}
\end{example}

The iteration is started with $(\bm{x}^{0}, \bm{\epsilon}^{0}) = (\left[0, 0\right]^{T}, \left[0.072, 0.463\right]^{T})$, where 
$\bm{\epsilon}^0$ is generated randomly as mentioned above. After $22$ iterations, it converges to one global optimal solution $\bm{x}^{*} = \left[0.2972, 0.2069\right]^{T}$.

We plot the iteration path of IRBP in \Cref{fig:sol_path} using the above initial point. In particular, some of the weighted $\ell_1$-balls in the subproblems are also plotted. Notice that the iterates always remain within the $\ell_p$ ball and eventually move towards the boundary of the ball. 
\begin{figure}[H]
	\centering
	\includegraphics[width=0.7\textwidth]{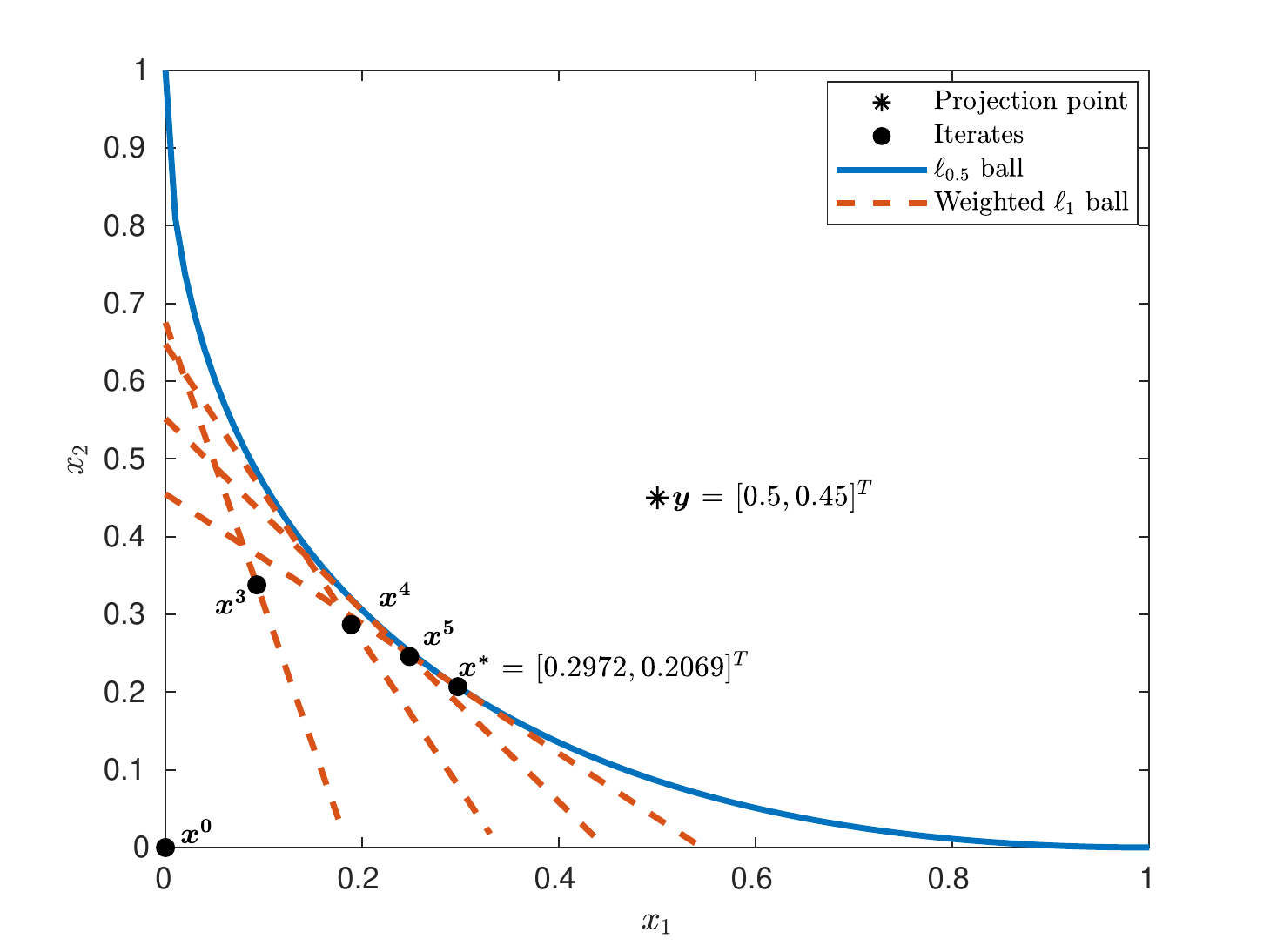}
	\caption{The iteration path of projecting $\bm{y} = \left[0.5, 0.45\right]^{T}$ onto the unit $\ell_{0.5}$-ball. In particular, the solid circles denote the iterates with the randomly generated $\bm{\epsilon}^{0} = \left[0.072, 0.463\right]^{T}$. The dashed line represents the boundary of a weighted $\ell_1$ ball constructed at each subproblem.} 
	\label{fig:sol_path}
\end{figure}

{\textbf{Second experiment: Performance  comparison.}} In this test, we compare the elapsed wall-clock time of IRBP and RS in successful runs. The comparison is done for $n=10^{2}$ and  $p \in \{0.4, 0.8\}$ with $100$ randomly generated test problems for each $p$. 
We compare the success ratio versus the elapsed wall-clock time and plot the performance profiles 
in \Cref{fig:profile}. We make the following observations:
\begin{itemize}
	\item For both cases, we can see that IRBP always achieves a higher success rate compared to RS. In particular, IRBP successfully solves all testing instances for both cases,  whereas RS can only handle about $25\%$ of instances for $p=0.4$ and about $70\%$ for $p=0.8$. On the other hand, both algorithms become more efficient for larger $p$ values.
	
	\item IRBP is superior to RS in terms of elapsed wall-clock time. For most of the test examples, the computational time needed by IRBP is less than $0.1$s for $p=0.4$ and $0.01$s for $p=0.8$. Notice that  the subproblem solver applied here  has an observed complexity $O(n)$ in many practical contexts. We are well aware that the algorithm can be further accelerated if one can design a faster method for the weighted $\ell_1$ ball. 
\end{itemize}
\begin{figure}[H]
	\centering
	\subfigure[$p=0.4$]{\includegraphics[width=0.495\textwidth]{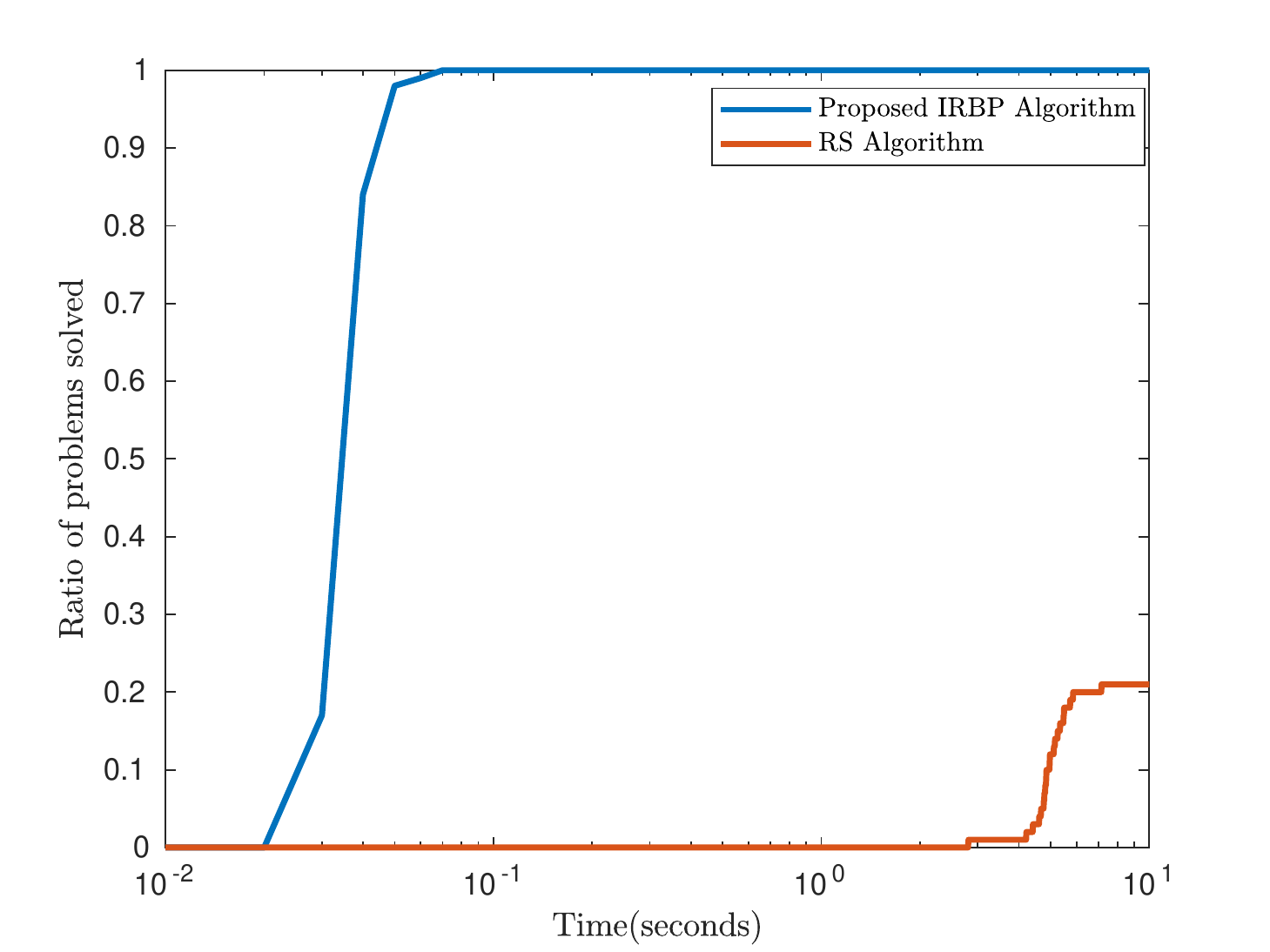}} 
	\subfigure[$p=0.8$]{\includegraphics[width=0.495\textwidth]{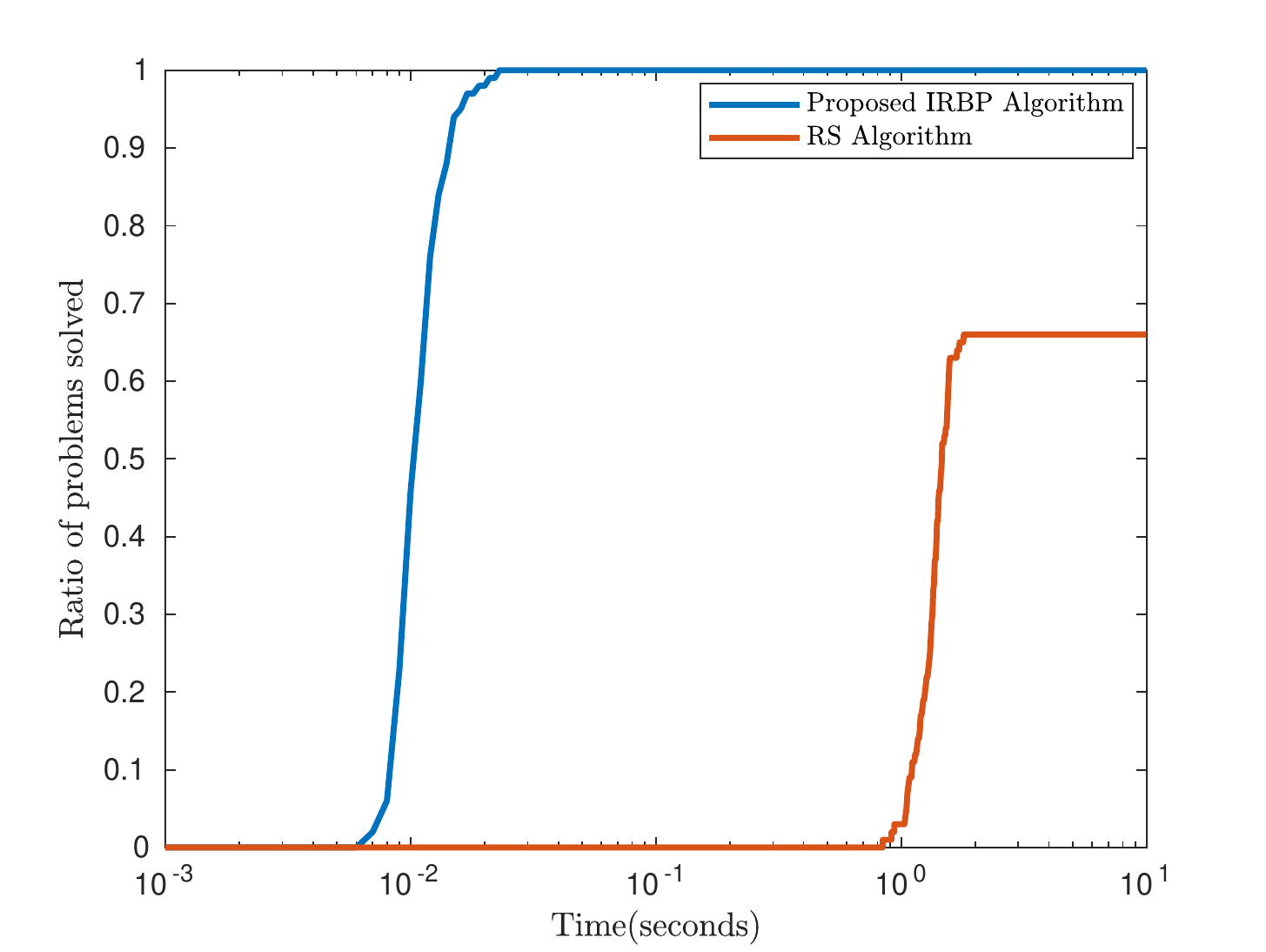}}\\
	\caption{Performance profiles for IRBP and RS with $p \in \{0.4,0.8\}$ and $n = 10^{2}$. }
	\label{fig:profile}
\end{figure}

{\bf Third experiment:  IRBP solves high-dimension problems.} 
In this experiment, we demonstrate  the efficiency of IRBP for solving large-scale problems. The RS algorithm is not included in this experiment since it fails to converge within the time limit. With $p \in  \{0.4,0.8\} $ and $n \in  \{10^{3}, 10^{4}, 10^{5}, 10^{6}\} $.  \Cref{fig: box_plot} reports the computation time, and the results are averaged over $20$ randomly generated test problems.
\begin{figure}[htbp]
	\centering
	\subfigure[$p=0.4$]{\includegraphics[width=0.495\textwidth]{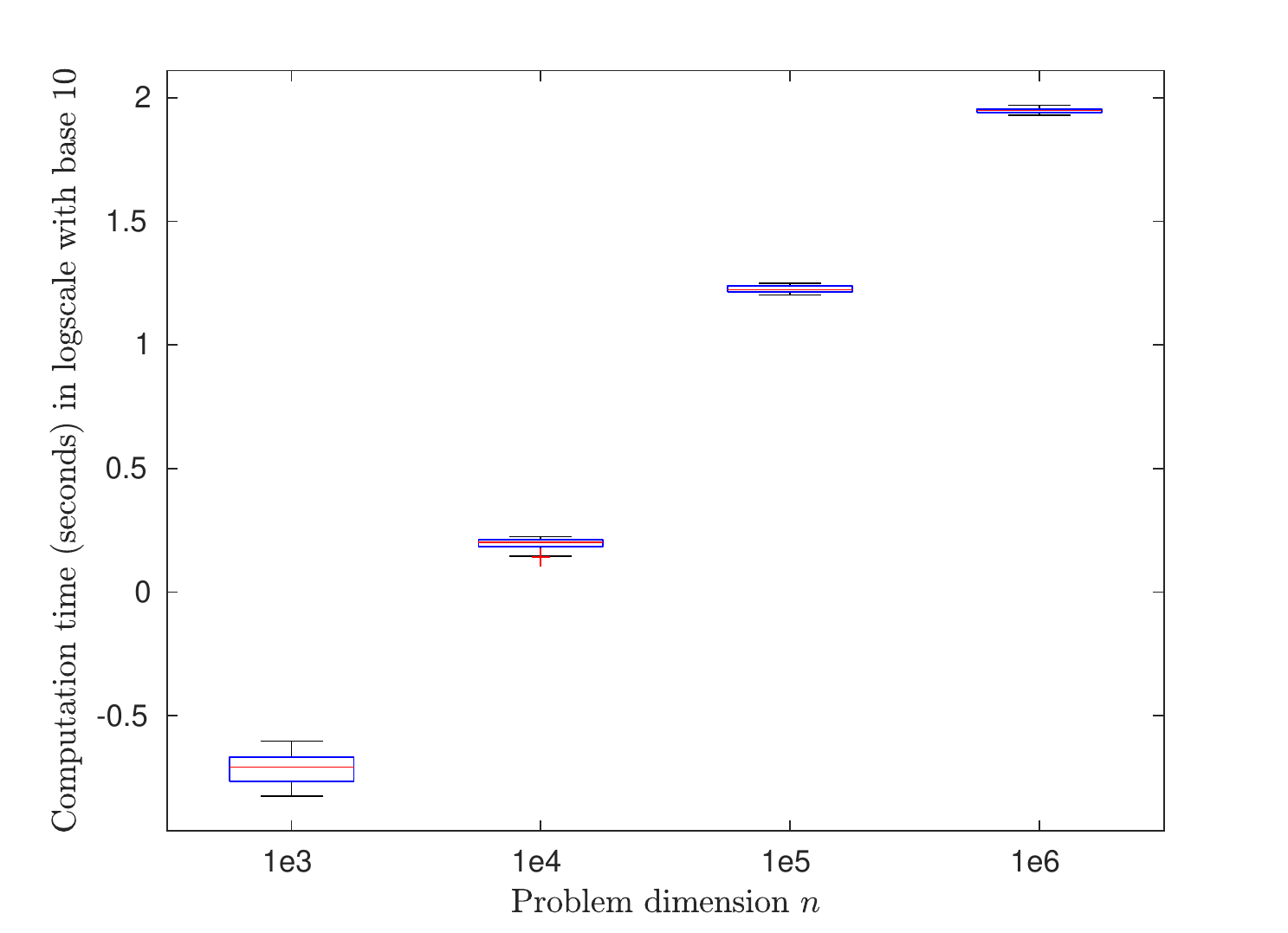}} 
	\subfigure[$p=0.8$]{\includegraphics[width=0.482\textwidth]{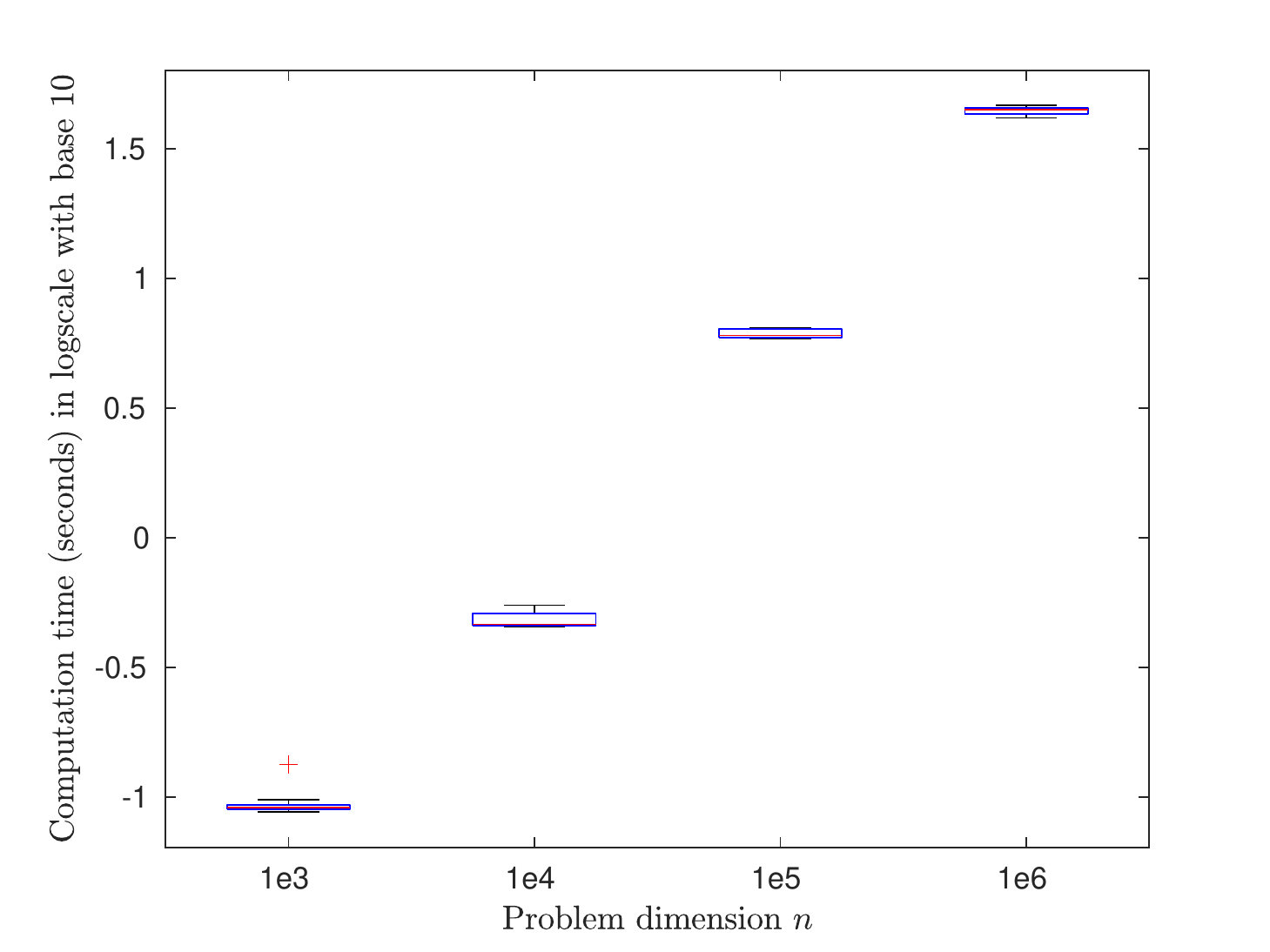}}\\
	\caption{Box plots of the elapsed wall-clock time (seconds) for IRBP with $p \in \{0.4, 0.8\}$ and $n \in \{10^{3}, 10^{4}, 10^{5}, 10^{6}\}$. Each presented elapsed wall-clock time value is averaged over $20$ runs.}
	\label{fig: box_plot}
\end{figure}

{\bf Fourth experiment: Sensitivity to parameters $M$ and $\tau$.} Following the data generation setting at the beginning of this section, in this experiment, we report the performance measures under different combinations of $(M,\tau)$. We use ``Obj'' and ``Time'' to denote the objective function value in~\eqref{projection.lp.+} and the elapsed wall-clock time in implementing IRBP, respectively.  \Cref{tab:Seek_M_and_tau} shows the test results, and the results are averaged over $20$ randomly generated test problems.

\begin{table}[H]
	\centering
	\caption{Performance measures of IRBP under different combinations of $(M,\tau)$ for $p=0.5$ and $n=10^5$.}
	\label{tab:Seek_M_and_tau}
	\begin{tabular}{c|l|c|c|c|c}
		\hline
		$M$                     & \multicolumn{1}{c|}{$\tau$} & Obj     & $\bar{\alpha}$ & $\beta$    & Time(s) \\ \hline
		\multirow{4}{*}{$10$}   & $1.01$                      & 158.145 & 2.0577e-11    & 1.0246e-10 & 17.19   \\
		& $1.1$                       & 158.322 & 2.0464e-11    & 1.0240e-10 & 17.06   \\
		& $1.5$                       & 158.155 & 2.0488e-11    & 1.0281e-10 & 17.08   \\
		& $1.8$                       & 158.012 & 2.1386e-11    & 1.0258e-10 & 16.97   \\ \hline
		\multirow{4}{*}{$10^2$} & $1.01$                      & 158.092 & 2.0793e-11    & 1.0298e-10 & 17.43   \\
		& $1.1$                       & 158.130 & 2.0630e-11    & 1.0223e-10 & 17.70   \\
		& $1.5$                       & 158.152 & 2.0947e-11    & 1.0256e-10 & 17.07   \\
		& $1.8$                       & 157.956 & 2.0934e-11    & 1.0245e-10 & 17.38   \\ \hline
		\multirow{4}{*}{$10^3$} & $1.01$                      & 158.189 & 2.1201e-11    & 1.0279e-10 & 17.21   \\
		& $1.1$                       & 158.179 & 2.1532e-11    & 1.0245e-10 & 17.56   \\
		& $1.5$                       & 158.244 & 2.0938e-11    & 1.0298e-10 & 17.43   \\
		& $1.8$                       & 158.149 & 2.0642e-11    & 1.0257e-10 & 17.19   \\ \hline
		\multirow{4}{*}{$10^4$} & $1.01$                      & 158.140 & 2.0456e-11    & 1.0252e-10 & 17.17   \\
		& $1.1$                       & 158.076 & 2.1416e-11    & 1.0253e-10 & 17.13   \\
		& $1.5$                       & 158.024 & 2.1136e-11    & 1.0233e-10 & 17.21   \\
		& $1.8$                       & 158.245 & 2.0674e-11    & 1.0224e-10 & 17.06   \\ \hline
	\end{tabular}
\end{table}

From~\Cref{tab:Seek_M_and_tau}, we can see that the IRBP is generally not sensitive to the choice of $M$ and $\tau$. Based on our experience, we suggest that the choice of $\tau$ should be appropriately close to $1$, and $M$ could be determined by rough tuning depending on the problems at hand.

\section{Conclusion}\label{sec: Conclusion}
In this paper, we have proposed, analyzed, and tested an iteratively reweighted $\ell_1$-ball projection approach for solving the projection of a given vector onto the $\ell_{p}$ ball. The central idea of the algorithm is to perform projections onto a sequence of simple and tractable weighted $\ell_{1}$-balls. First-order necessary optimality conditions are derived. We have established global convergence and analyzed the worst-case complexity of the proposed algorithm. Numerical experiments have demonstrated the effectiveness and efficiency of the algorithm. 

We believe that our work could lead to a breakthrough movement in the research of solving $\ell_p$-ball constrained optimization problems. With a practical $\ell_p$-ball projection method,  many intractable $\ell_p$-ball constrained optimization problems can now be handled. Future work may involve analyzing the local convergence rate,  accelerating the $\ell_p$-ball projection algorithm, and exploring the applicability of this problem in many applications.


\appendix
\section{Proofs of technical propositions and lemmas}

\subsection{Proof of~\eqref{eq.tmp.xhat}}\label{app_A}
\begin{proof}
	To compute the optimal solution $\hat{\bm{x}}$, we first derive the optimality condition of its associated optimization problem in the following
	\[
	(2 + 2\nu)\hat{\bm{x}} = 2\nu \bar{\bm{y}},\quad  \nu \geq 0,
	\]
	where $\nu$ is the defined nonnegative multiplier. It follows that 
	$\hat{\bm{x}} = \frac{\nu}{1+\nu}\bar{\bm{y}}.
	$
	Therefore, it satisfies $\Vert(\frac{\nu}{1+\nu} - 1)\bar{\bm{y}}\Vert_{2}^{2} \leq \Vert\bm{x}^{\bar{k}} - \bar{\bm{y}}\Vert_{2}^2$. 
	Simple rearrangement yields that 
	$
	\nu \geq \frac{\Vert\bar{\bm{y}}\Vert_{2}}{\Vert\bm{x}^{\bar{k}} - \bar{\bm{y}}\Vert_{2}} -1.
	$
	Since $\nu \geq 0$ and $\bm{x}^{\bar{k}} \neq \bm{0}$, we have $0 < \Vert\hat{\bm{x}}\Vert_{2} \leq\Vert\bar{\bm{y}}\Vert_2 - \Vert\bm{x}^{\bar{k}} - \bar{\bm{y}}\Vert_{2}$, completing the proof.
\end{proof}
\subsection{Proof of Lemma \ref{lem.eps to 0}}\label{app_Trigger-Inf}
\begin{proof} 
	Suppose by contradiction that there exists $\hat k \in \Nb$ such that 
	\begin{equation}\label{temp.eq.1}
		\|\bm{x}^{k+1} - \bm{x}^k\|_2\|\text{sign}(\bx^{k+1} - \bx^k) \circ \bw^{k}\|_{2}^{  \tau } >  M,\  \textrm{ for all }  k \ge \hat k,
	\end{equation}	
	Since $\bm{\epsilon}$ is never decreased after iteration $\hat k$,  it holds that 
	\[ w_i^k = p(|x_i^k|+\epsilon_i^k)^{p-1} \le p(\epsilon_i^{\hat k})^{p-1}\]
	for 
	all $i\in[n]$. 
	Thus, it follows from \eqref{temp.eq.1} that 
	\begin{equation*}
		\|\bm{x}^{k+1}-\bm{x}^k\|_2  > M \| \text{sign}(\bx^{k+1} - \bx^k) \circ  \bw^{k}\|_2^{-\tau} \ge M\big( \sum\limits_{x_i^{k+1} \ne x_i^k} p^2(\epsilon_i^{\hat k})^{2(p-1)}  \big)^{-\tau/2} > 0
	\end{equation*} 
	for all $k \ge \hat k$, contradicting Lemma \ref{lem.x-x} (ii). This completes the proof.
\end{proof}
\subsection{Proof of Proposition \ref{lem.stable_nonzero}}\label{app_3importantProperty}
\begin{proof} 
	Since $0 \not\in \Lambda_{\Ucal}$, there exists $\hat{\lambda}$ such that $\lambda^{k} \geq \hat{\lambda}$ for all $k\in\Ucal$. 
	\begin{itemize}
		
		\item[(i)] Consider a subsequence $\mathcal{S}\subset \Ucal$. There exists $\delta_i > 0 $ such that for sufficiently large $k\in \mathcal{S} \subset \Ucal$, $x_{i}^{k} > \delta_{i} > 0$ for all $i \in\Ical^k$ (notice that $\Ical^{k} \neq \emptyset$  by Lemma \ref{lemma.primal}(i)). 
		
		We prove this by contradiction and suppose this is not true. By Lemma \ref{lem.x-x} and Lemma \ref{lem.eps to 0}, there exists $\bar{k}$ such that $|x_i^{k} - x_i^{k-1}| < \tfrac{1}{4} {\big(  \tfrac{1}{\hat{\lambda} p}  \|\bar{\bm{y}}\|_\infty \big)^{1-p}} $ and $|\epsilon_{i}^{k} - \epsilon_{i}^{k-1}|< \tfrac{1}{4} {\big(  \tfrac{1}{\hat{\lambda} p}  \|\bar{\bm{y}}\|_\infty \big)^{1-p}} $ for all $k > \bar{k}$ and $i\in\Ical^{k}$. On the other hand, by assumption, there exists $\hat{k}$ such that $x_{i}^{k} < \tfrac{1}{4} {\big(  \tfrac{1}{\hat{\lambda} p} \|\bar{\bm{y}}\|_\infty \big)^{1-p}}  $ and $\epsilon_{i}^{k} <  \tfrac{1}{4} {\big(  \tfrac{1}{\hat{\lambda} p}  \|\bar{\bm{y}}\|_\infty \big)^{1-p}} $ for all $k > \hat{k}$ and $i\in\Ical^{k}$. Hence, for all $k > \max(\bar{k},\hat{k})$, this implies
		\begin{equation*}
			\begin{aligned}
				\bar{y}_{i} - x_{i}^{k} = \lambda^{k}p(x_{i}^{k-1} + \epsilon_{i}^{k-1})^{p-1} &\geq \hat{\lambda}p(x_{i}^{k-1} + \epsilon_{i}^{k-1})^{p-1} \\
				& \geq \hat{\lambda}p (x_i^k + |x_i^{k} - x_i^{k-1}| + \epsilon_{i}^k + |\epsilon_{i}^{k} - \epsilon_{i}^{k-1}|)^{p-1}\\
				&>  \|\bar{\bm{y}}\|_\infty.
			\end{aligned}
		\end{equation*}
		
		This contradicts the assumption that $x_{i}^{k} > 0$. 
		Therefore,   $x_{i}^{k} > \delta_i > 0$ for sufficiently large  $k\in \Ucal$ and $i \in \Ical^k$.
		
		Next, we show that $\Ical^k$ and $\Acal^k$ are fixed for sufficiently large $k$. For sufficiently large $k \in \Ucal$ and $i \in \Ical^{k}$, we have $x_{i}^{k+1} > \delta_i / 2 > 0$ since $x_{i}^{k+1} - x_{i}^{k} \to 0$ by Lemma \ref{lem.x-x}. Hence 
		\begin{equation}\label{eq: inactive_set_sub}
			\Ical^k \subset \Ical^{k+1}. 
		\end{equation}
		It then suffices to show $\Acal^{k} \subset \Acal^{k+1}$. 
		For sufficiently large $k \in \Ucal$ and $i \in \Acal^{k}$, we have
		$\lambda^{k+1}w_{i}^{k} = \lambda^{k+1}p(|x_{i}^{k}| + \epsilon_i^k)^{p-1} = \lambda^{k+1}p(\epsilon_i^k)^{p-1} \geq \hat{\lambda}p(\epsilon_i^k)^{p-1}\geq \bar{y}_{i}$, which means  $x_{i}^{k+1} = 0$ by \eqref{kkt.2.l1}. In other words, we obtain 
		$\Acal^{k} \subset \Acal^{k+1}$	for sufficiently large $k\in \Ucal$.
		This, together with~\eqref{eq: inactive_set_sub}, gives
		\begin{equation*}
			\Acal(\bm{x}^{k}) = \Acal(\bm{x}^{k+1})\ \textrm{ and }\ \Ical(\bm{x}^{k}) = \Ical(\bm{x}^{k+1}).
		\end{equation*}
		
		Now, we are ready to show the update condition in~\eqref{update.condition} is also satisfied at the $(k+1)$th iteration. Since $\Ical^{k} \neq \emptyset$ (by Lemma \ref{lemma.primal} (i)) and $\lim\limits_{\substack{ k \to \infty \\ k\in \Ucal  }}\Vert\bm{x}^{k+1}_{\Ical^{k}} - \bm{x}^{k}_{\Ical^{k}}\Vert_{2} = 0$, we know for sufficiently large $k$, 
		\begin{equation*}
			\Vert\bm{w}_{\Ical^{k}}^{k}\Vert_{2}^{2} = p^2 \sum_{i\in\Ical^k}(x_i^{k} + \epsilon_{i}^{k})^{2(p-1)} \leq p^2\sum_{i\in\Ical^k} \delta^{2(p-1)}  
			= p^2 |\Ical^k| \delta^{2(p-1)}.
		\end{equation*}
		Overall, we have  for sufficiently large $k\in\Ucal$,
		\begin{equation}\label{eq: check_update}
			\begin{aligned}
				\Vert\bm{x}^{k+1} - \bm{x}^{k}\Vert_2\Vert\textrm{sign}(\bm{x}^{k+1}-\bm{x}^{k})\circ \bm{w}^{k}\Vert_{2}^{\tau} \leq \Vert\bm{x}^{k+1}_{\Ical^{k}} - \bm{x}^{k}_{\Ical^{k}}\Vert_2\Vert\bm{w}^{k}_{\Ical^{k}}\Vert_{2}^{\tau} < M,
			\end{aligned}
		\end{equation}
		implying $k+1 \in \Ucal$. By induction, we know the condition~\eqref{update.condition} is triggered after $k > \bar{k}$.
		
		\item[(ii)] By statement~(i), we know that $0$ is not a cluster point of $\{\lambda^{k}\}$. Then there exists $\hat{\lambda}$ such that $\lambda^{k} \geq \hat{\lambda}$ for all $k\in\mathbb{N}$. Let $C := \Vert\bar{\bm{y}}\Vert_{\infty}/\hat{\lambda}$.
		
		Suppose $\{w_i^k\}$ is unbounded, then there exists $\bar k$ such that $w_i^k \ge C$. 
		By \eqref{kkt.1.l1}, it holds true that $x_i^{\bar{k}+1}= 0$, yielding 
		\begin{equation*}
			\begin{aligned}
				w_i^{\bar{k}+1} = p(|x_i^{\bar{k}+1}| + \epsilon_i^{\bar{k}+1})^{p-1} = p(\epsilon_i^{\bar{k}+1})^{p-1} \geq p(|x_i^{\bar{k}}| + \epsilon_i^{\bar{k}})^{p-1} = w_i^{\bar{k}},
			\end{aligned}
		\end{equation*}
		where the inequality holds because $(\cdot)^{p-1}$ is nonincreasing and $\epsilon_i^{\bar{k}+1}\leq |x_i^{\bar{k}}| + \epsilon_i^{\bar{k}}$. Hence, by induction, it follows that $x_i^k  \equiv 0$ for any $k > \bar{k}$. On the other hand, suppose $\{w_i^k\}$ is bounded above. By statement~(i), $\{(\bm{x}^{k}, \lambda^{k})\}_{\Ucal}$ coincides with $\{(\bm{x}^{k}, \lambda^{k})\}$ after $k >\bar{k}$,  we then know $x_{i}^{k}$ is strictly bounded away from $0$ for all sufficiently large $k$ . Therefore,  there exists  $\delta_i > 0$
		such that $x_i^k > \delta_{i} >0$
		for all sufficiently large $k$.
		\item[(iii)] This statement follows straightforwardly from~(ii).
	\end{itemize}
	
\end{proof}
\subsection{Proof of Lemma \ref{lem.residual bounded}}\label{app_C}
\begin{proof}
	To derive \eqref{residual.bound1}, recalling \eqref{kkt.1.l1}, we have  
	\begin{equation*} 
		\begin{aligned}
			\alpha(\bm{x}^{k},\lambda^{k})
			=\sum_{i=1}^n |(\bar{y}_i - x_i^k -\lambda^{k} p(x_i^k )^{p-1})x_i^k| &= \lambda^{k}p \sum_{i\in\Ical^k}| (x_i^{k-1}+\epsilon_i^{k-1})^{p-1} (x_i^k )  -  (x_i^k )^p|\\
			&=\lambda^{k} p\sum_{i\in\Ical^k}| (x_i^{k-1} +\epsilon_i^{k-1})^{p-1} - (x_i^k )^{p-1}| |x_i^k|. 
		\end{aligned}
	\end{equation*}
	From the Lagrange's mean value theorem, we have for $i\in\Ical^k$ 
	\begin{equation}\label{lag.mean}
		(x_i^{k-1} +\epsilon_i^{k-1})^{p-1} - (x_i^k )^{p-1} = (p-1) (\tilde{x}_i^k )^{p-2} (x_i^{k-1} + \epsilon_i^{k-1} -x_i^k )
	\end{equation}
	with $\tilde{x}_{i}^{k}$ between $x_i^k$ and $x_i^{k-1}+\epsilon_i^{k-1}$.   It then follows that 
	\begin{equation*} 
		\begin{aligned}
			\alpha(\bm{x}^{k},\lambda^{k})
			\le & \  \lambda^{k} (1-p)p  \sum_{i\in\Ical^k} (\tilde{x}_i^k )^{p-2} x_i^k |x_i^{k-1} + \epsilon_i^{k-1}  -x_i^k | \\
			\le & \  \lambda^{k} (1-p)p [ \sum_{i\in\Ical^k} (\tilde{x}_i^k )^{p-2} x_i^k (|x_i^{k-1}-x_i^k | +  \epsilon_i^k) ]  \\
			\le & \  \lambda^{k} (1-p)p \Vert[\tilde{\bm{x}}^{k}_{\Ical^k}]^{p-2} \Vert_{2}\Vert\bar{\bm{y}}\Vert_{\infty}\left[ \Vert\bm{x}^{k-1}_{\Ical^k} - \bm{x}^{k}_{\Ical^k}\Vert_{2} +\Vert\bm{\epsilon}^k_{\Ical^k}\Vert_{2}\right],
		\end{aligned}
	\end{equation*}
	completing the proof for \eqref{residual.bound1}. 
	
	To derive \eqref{residual.bound2}, 
	we have from Lemma \ref{lem.x-x} (i)
	\begin{equation*} 
		\begin{aligned}
			\beta(\bm{x}^k)  
			= &\ |\sum_{i=1}^{n} (x_i^k)^{p} - \gamma|\\
			\leq & \ |\sum_{i=1}^n (x_i^k)^{p}  - \sum_{i=1}^n[ (x_i^{k-1} +\epsilon_i^{k-1} )^{p} + w_i^{k-1}(x_i^{k} - x_i^{k-1})] |\\ 
			\leq & \ \sum_{i\in\Ical^{k}  } w_i^{k-1}|x_i^{k-1} - x_i^k| +  \sum_{i\in\Ical^{k} } |(x_i^{k-1} +\epsilon_i^{k-1} )^{p}  - (x_i^k)^{p}  |  +\sum_{i\in\Acal^{k}} ( x_{i}^{k-1} 
			+  \epsilon_i^{k-1} )^{p}.
		\end{aligned}
	\end{equation*}
	From the Lagrange's mean value theorem, we have for $i\in \Ical^{k}$, 
	\begin{equation}\label{lag.mean2}
		(x_i^{k-1} +\epsilon_i^{k-1})^p - (x_i^k )^p= p(\hat{x}_i^k )^{p-1} (x_i^{k-1} + \epsilon_i^{k-1} -x_i^k )
	\end{equation}
	with $\hat{x}_i^k $ between $x_i^k $ and $x_i^{k-1}+\epsilon_i^{k-1}$.  
	It then follows that 
	\begin{equation*}
		\begin{aligned}
			\beta(\bm{x}^k)
			\leq & \ \|\bw_{\Ical^k}^{k-1} \|_2 \Vert\bm{x}^{k-1}_{\Ical^{k}} - \bm{x}^{k}_{\Ical^{k}}\Vert_{2} \!+\! p\sum_{i\in\Ical^{k}}(\hat{x}_i^{k})^{p-1}(|x_i^{k-1}-x_i^k|
			+ |\epsilon_i^{k-1} |) \\
			& \ +  \sum_{i\in\Acal^{k}} ( x_{i}^{k-1} 
			+  \epsilon_i^{k-1} )^{p-1}( x_{i}^{k-1} 
			+  \epsilon_i^{k-1} )\\
			\leq & \ \|\bw_{\Ical^k}^{k-1} \|_2 \Vert\bm{x}^{k-1}_{\Ical^{k}} - \bm{x}^{k}_{\Ical^{k}}\Vert_{2} \!+\! p\sum_{i\in\Ical^{k}}(\hat{x}_i^{k})^{p-1}(|x_i^{k-1}-x_i^k|
			+ |\epsilon_i^{k-1} |) \\
			& \ +  \sum_{i\in\Acal^{k}} (\epsilon_i^{k-1} )^{p-1}( x_{i}^{k-1} 
			+  \epsilon_i^{k-1} )\\
			\leq & \  (\|\bw_{\Ical^{k}}^{k-1} \|_2\!+\!p\|[\hat\bx_{\Ical^{k}}^k]^{p-1} \|_2) \Vert\bm{x}^{k-1}_{\Ical^{k}} \!-\! \bm{x}^{k}_{\Ical^{k}}\Vert_{2} +\! p\|[\hat\bx_{\Ical^{k}}^k]^{p-1} \|_\infty  \|\bm{\epsilon}_{\Ical^{k}}^{k-1} \|_1\\ 
			&\ + \|[\beps_{\! \Acal^{k}}^{k-1}]^p\|_1 + \|[\beps_{\! \Acal^{k}}^{k-1}]^{p-1} \circ {\bm{x}}_{\Acal^{k}}^{k-1}\|_{1},
		\end{aligned}
	\end{equation*}
	completing the proof for \eqref{residual.bound2}. 
\end{proof}
\subsection{Proof of~\Cref{Theo: unique_convergence}}\label{app_unique_convergence}
\begin{proof}
	Suppose $\{\bx^k\}$ has multiple cluster points, then $\Gamma$ contains more than one element. Lemma \ref{lemma.primal} (ii) implies  $\bm{x}^*\neq 0$. Hence $\Ical^*\neq \emptyset$. 
	
	By Proposition \ref{lem.stable_nonzero}, for any $(\bm{x}^{*}, \lambda^*) \in \Gamma$ satisfying~\eqref{kkt.2} implies that $\bm{x}_{\Ical^{*}}^{*}$ is a first-order stationary solution to the smoothed $\ell_{p}$ regularization problem in the reduced subspace $\mathbb{R}^{|\Ical^*|}$. Note that the Lagrangian for~\eqref{projection.lp.+} in $\mathbb{R}^{|\Ical^*|}$ is written as 
	\begin{equation}\label{eq: Lagrangian}
		\mathcal{L} (\bm{x}_{\Ical^*},\lambda^{*}) = \frac{1}{2} \Vert\bm{x}_{\Ical^*} - \bar{\bm{y}}_{\Ical^*}\Vert_{2}^2 + \lambda^{*}(\Vert\bm{x}_{\Ical^*}\Vert_{p}^p - \gamma).
	\end{equation}
	Therefore, we have $[\nabla_{\bm{x}_{\Ical^*}}\mathcal{L} (\bm{x}_{\Ical^*}^*,\lambda^*)]_{i} = x_{i}^* - \bar{y}_{i} +  \lambda^* p(x_{i}^*)^{p-1} = 0$ for $i\in \Ical^*$ and $(\bm{x}^*, \lambda^*)\in \Gamma$, meaning 
	every limit point is stationary for $\mathcal{L} (\bm{x}_{\Ical^*},\lambda^*)$.  On the other hand, by \cite[Proposition 6.4]{golubitsky2012stable}, if the Hessian of $\mathcal{L}$ with respect to $\bm{x}_{\Ical^*}$
	\begin{equation*}\label{eq: Hession_reduced}	
		\left[ \nabla^{2}\mathcal{L} (\bm{x}_{\Ical^*},\lambda^*)\right]_{\Ical^*\Ical^*}   = \bm{I}_{\Ical^*\Ical^*} + \text{Diag}(\lambda^* p(p-1)(\bm{x}_{\Ical^*}^*)^{p-2}) 
	\end{equation*}
	is nonsingular, or equivalently, $x_i^* \neq [\lambda^* p(1-p)]^{\frac{1}{2-p}}$ for $i \in \Ical^*$. Then $\bm{x}^{*}$ is an isolated critical point. This contradicts Lemma \ref{prop.property of limits} and the assumption. Hence, $\bm{x}^{*}$ is the unique cluster point of $\{\bm{x}^{k}\}$.
\end{proof}
\subsection{Proof of~\Cref{lem: local_complexity_analysis}}\label{app_local_complexity}
\begin{proof}
	By Proposition \ref{lem.stable_nonzero}, we can assume the $\bar{k}$ gives us the stable $\Ical(\bx^k) = \Ical(\bx^*)$ and $\Acal(x^k) = \Acal(x^*)$ for $k \ge \bar{k}$. Lemma \ref{lem.residual bounded} implies $\{\| [\tilde{\bx}^k_{\Ical^*}]^{p-2}\|_2 \}$ is bounded above by a constant $c_1$.
	
	Let $\bar{\lambda}$ denote an upper bound for $\{\lambda^{k}\}$ by Lemma \ref{lemma.primal} (iv) and define  $\kappa_1:= (\bar{\lambda}p(1-p)c_1\Vert\bar{\bm{y}}\Vert_{\infty}$.
	Then, we rewrite~\eqref{residual.bound1} as
	\begin{equation}\label{eq: rewrite_alpha}
		\begin{aligned}
			\alpha^{2}(\bm{x}^{k}, \lambda^{k}) \le & \  (\kappa_1\Vert\bm{x}_{\Ical^*}^{k-1} - \bm{x}_{\Ical^*}^{k}\Vert_{2} + \kappa_1\Vert\beps_{\Ical^*}^{k}\Vert_{2})^2
			\leq  2\kappa_1^2(\Vert\bm{x}_{\Ical^*}^{k-1} - \bm{x}_{\Ical^*}^{k}\Vert_{2}^{2} + \Vert\beps_{\Ical^*}^{k}\Vert_{2}^{2})\\
			\leq & 2\kappa_1^2(\Vert\bm{x}^{k-1}_{\Ical^*} - \bar{\bm{y}}_{\Ical^*}\Vert_{2}^{2} - \Vert\bm{x}^{k}_{\Ical^*} - \bar{\bm{y}}_{\Ical^*}\Vert_{2}^{2} + \Vert\beps_{\Ical^*}^{k}\Vert_{2}^{2}),
		\end{aligned}
	\end{equation}
	where the second inequality follows from the Cauchy-Schwarz inequality and the last inequality holds due to~\eqref{eq: sum1}.
	
	Replacing $k$ by $t$ and summing up both sides of~\eqref{eq: rewrite_alpha} from $ \bar{k} $ to $ \bar{k}   + k$ yields 
	\begin{equation*}
		\begin{aligned}
			k\min_{\bar{k} \leq t \leq \bar{k} + k} \alpha^{2}(\bm{x}^{t}, \lambda^{t})  &  \leq
			\sum_{t = \bar{k}}^{\bar{k}   + k}\alpha^{2}(\bm{x}^{t}, \lambda^{t}) \le  \sum_{t = \bar{k}}^{\bar{k} + k}2\kappa_1^2(\Vert\bm{x}^{t-1}_{\Ical^*} - \bar{\bm{y}}_{\Ical^*}\Vert_{2}^{2} - \Vert\bm{x}^{t}_{\Ical^*} - \bar{\bm{y}}_{\Ical^*}\Vert_{2}^{2} + \Vert\beps_{\Ical^*}^{t}\Vert_{2}^{2})\\
			&\le  \  2\kappa_1^2(\Vert\bm{x}^{\bar{k}-1}_{\Ical^*} - \bar{\bm{y}}_{\Ical^*}\Vert_{2}^{2} - \Vert\bm{x}^{\bar{k} + k}_{\Ical^*} - \bar{\bm{y}}_{\Ical^*}\Vert_{2}^{2} + \sum_{t = \bar{k}}^{\bar{k} + k}\Vert\beps_{\Ical^*}^{t}\Vert_{2}^{2})\\
			&\le  2\kappa_1^2(\Vert\bm{x}^{\bar{k}-1}_{\Ical^*} - \bar{\bm{y}}_{\Ical^*}\Vert_{2}^{2} - \Vert\bm{x}^{*}_{\Ical^*} - \bar{\bm{y}}_{\Ical^*}\Vert_{2}^{2} + \frac{\theta^2 -\theta^{2(k+1)}}{1-\theta^2}\Vert\beps_{\Ical^*}^{\bar{k}}\Vert_{2}^{2})\\
			& \leq C_\alpha  : =  2\kappa_1^2(\Vert\bm{x}^{\bar{k}-1}_{\Ical^*} - \bar{\bm{y}}_{\Ical^*}\Vert_{2}^{2} - \Vert\bm{x}^{*}_{\Ical^*} - \bar{\bm{y}}_{\Ical^*}\Vert_{2}^{2} + \tfrac{\theta^2  }{1-\theta^2}\Vert\beps_{\Ical^*}^{\bar{k}}\Vert_{2}^{2}),
		\end{aligned}
	\end{equation*}
	where the second inequality follows from~\eqref{eq: sum1} and the last inequality holds because the update of $\beps$ in~\eqref{update.condition} is always triggered. 
	Dividing both sides  by $k$ yields 
	$\min_{\bar{k} \leq t \leq \bar{k} + k} \alpha(\bm{x}^{t}, \lambda^{t}) \leq  \sqrt{C_\alpha / k}$.

	
	Similar  argument  applied to \eqref{residual.bound2} yields  
	\begin{equation*}
		\begin{aligned}
			\ \beta^{2}(\bm{x})
			\leq \kappa_2 (\Vert\bm{x}^{k-1}_{\Ical^*} - \bar{\bm{y}}_{\Ical^*}\Vert_{2}^{2} - \Vert\bm{x}^{k}_{\Ical^*} - \bar{\bm{y}}_{\Ical^*}\Vert_{2}^{2} + \Vert\beps_{\Ical^*}^{k-1}\Vert_{1}^2 + \Vert[\beps_{\Ical^*}^{k-1}]^{p}\Vert_{1}^2)
		\end{aligned}
	\end{equation*} 
	with $\kappa_2 :=( \bar{w} + pc_2)^2 + p^2c_2^2 + 1$. 
	Replacing $k$ by $t$ and summing both sides   from $\bar k$ to $\bar{k}  + k$ yields that 
	\begin{equation*} 
		\begin{aligned}
			& k\min_{\bar{k} \leq t \leq\bar{k} + k} \beta^{2}(\bm{x}^{t} ) 	\leq  
			\sum_{t =\bar{k} }^{\bar{k} +k}\beta^{2}(\bm{x}^{t}) \\
			\leq & \kappa_2 (\Vert\bm{x}^{\bar{k}-1}_{\Ical^*} - \bar{\bm{y}}_{\Ical^*}\Vert_{2}^{2} - \Vert\bm{x}^{*}_{\Ical^*} - \bar{\bm{y}}_{\Ical^*}\Vert_{2}^{2} 
			+\tfrac{\theta^2 -\theta^{2(k+1)}}{1-\theta^2}\Vert\beps_{\Ical^*}^{\bar{k}}\Vert_{1}^{2} +\tfrac{\theta^{2p} - (\theta^{2p})^{ k+1}}{1-\theta^{2p}}\Vert[\beps_{\Acal^*} ^{\bar{k} +1}]^p\Vert_{1}^{2}) \\ 
			\leq & C_\beta : = \kappa_2(\Vert\bm{x}^{\bar{k}-1}_{\Ical^*} - \bar{\bm{y}}_{\Ical^*}\Vert_{2}^{2} - \Vert\bm{x}^{*}_{\Ical^*} - \bar{\bm{y}}_{\Ical^*}\Vert_{2}^{2}+\tfrac{\theta^2}{1-\theta^2}\Vert\beps_{\Ical^*}^{\bar{k}}\Vert_{1}^{2} +\tfrac{\theta^{2p} }{1-\theta^{2p}}\Vert[\beps_{\Acal^*} ^{\bar{k} +1}]^p\Vert_{1}^{2}).
		\end{aligned}
	\end{equation*}
	
	\noindent Dividing both sides  by $k$  yields $\min_{\bar{k} \leq t \leq \bar{k} + k} \beta (\bm{x}^{t})   \leq  \sqrt{C_\beta / k}$.
\end{proof}
\subsection{Proof of Lemma \ref{Lem: approximation_is_controlled}}\label{app_D}
\begin{proof} 
	\noindent We consider two cases. If $0\le x_i \le \epsilon_i$ for $i \in [n]$, it holds that 
	\begin{equation}\label{bound.1.1} 
		(x_i +\epsilon_i)^p -x_i^p  <   (x_i +\epsilon_i)^p  \le     (2\epsilon_i)^p  = 
		2^p \epsilon_i^p, 
	\end{equation} 
	and 
	\begin{equation}\label{bound.1.2}
		\begin{split}
			& \ \big| |( \bar y_i - x_i )x_i  -  \lambda p  x_i^p| - |( \bar y_i - x_i )x_i  -  \lambda p  (x_i +\epsilon_i)^{p-1}x_{i}  | \big|\\
			\le &\  \lambda p x_i | (x_i +\epsilon_i)^{p-1} -x_i^{p-1} | \\
			\leq &   \ \lambda px_i \epsilon_i^{p-1}\leq \lambda p\epsilon_i^{p}.
		\end{split} 
	\end{equation}
	
	\noindent On the other hand, suppose $ \epsilon_i< x_i \le \bar y_i$, there then exists $\hat x_i \in [x_i, x_i + \epsilon_i]$ such that 
	\begin{equation}\label{bound.2.1}
		(x_i+\epsilon_i)^p -x_i^p =      p   \hat x_i^{p-1} \epsilon_i  \leq   p  x_i^{p-1} \epsilon_i 
		<   p \epsilon_i^{p-1} \epsilon_i  =     p \epsilon_i^p <     2^p \epsilon_i^p, 
	\end{equation}
	implying 
	\begin{equation}\label{bound.2.2}
		\begin{aligned}
			&\ \big| |( \bar y_i - x_i )x_i  -  \lambda p  (x_i )^p| - |( \bar y_i - x_i )x_i  -  \lambda p  (x_i \!+\!\epsilon_i)^{p-1}x_{i}  | \big|\\
			\le & \   \lambda p x_i | (x_i +\epsilon_i)^{p-1} \!-\!x_i^{p-1} | \\
			\le & \     \lambda p(1-p) x_i \hat{x_{i}}^{p-2}\epsilon_i \\
			\le & \ \lambda p(1-p) x_i^{p-1}\epsilon_{i} 
			\le    \lambda p \epsilon_i^{p}.  
		\end{aligned} 
	\end{equation}
	Combining these two cases and summing up both sides of \eqref{bound.1.1} and \eqref{bound.2.1}, we have 
	$
	|\beta(\bx) - \beta_{\beps}(\bx) | <  2^p \|\bm{\epsilon}\|_p^p.
	$
	Similarly, summing up both sides of \eqref{bound.1.2} and \eqref{bound.2.2}, we have 
	$
	|\alpha(\bx, \lambda)-\alpha_{\beps}(\bx)| \le \lambda p  \sum_{i=1}^n \epsilon_i^p =  \lambda p  \|\bm{\epsilon}\|_p^p.
	$
	This completes the proof.
\end{proof}
\subsection{Proof of Lemma \ref{alphabeta}}\label{app_H}
\begin{proof} 
	From the optimality conditions~\eqref{lambda.l1}, we have 
	\begin{equation}\label{lambda.bound.1}
		\begin{aligned}
			\lambda^k = & \frac{\sum\limits_{i\in\Ical^k} (\bar y_i - x_i^k ) }{ p \sum\limits_{i\in\Ical^k} (x_i^{k-1}+\epsilon_{i})^{p-1}}  \leq 
			\frac{\sum\limits_{i\in\Ical^k} \bar y_i}{  p\sum\limits_{i\in\Ical^{k}}(\bar{y}_{i}+\epsilon_{i})^{p-1}} \leq \frac{\Vert\bar{\bm{y}}\Vert_{1}}{  p\min\limits_{i\in\Ical^{k}}                                                        (\bar{y}_{i}+\epsilon_{i})^{p-1}} \\
			\le &  \frac{\Vert\bar{\bm{y}}\Vert_{1}}{  p\min\limits_{i\in[n]}(\bar{y}_{i}+\epsilon_{i})^{p-1}} \le 
			\frac{\Vert\bar{\bm{y}}\Vert_{1}}{p(\max\limits_{i\in[n]}(\bar{y}_{i} + \epsilon_{i}))^{p-1}}  \le \frac{\Vert\bar{\bm{y}}\Vert_{1}}{p(\Vert\bar{\bm{y}}\Vert_{\infty} + \Vert\bm{\epsilon}\Vert_{\infty})^{p-1}},
		\end{aligned}
	\end{equation}
	where the first inequality to the fourth inequality hold due to Lemma \ref{lemma.primal} (i) and $(\cdot)^{p-1}$ is a monotonically descreasing function on $\mathbb{R}_{++}$.

	Now we prove (ii).  Since for any $\bm{a}, \bm{b}, \bm{c} \in \mathbb{R}^n_+$, it is true that 
	\begin{equation}\label{abc}
		\sum_{i=1}^na_ib_ic_i  \le \|\bm{a}\|_2 \|\bm{b}\|_2\|\bm{c}\|_2. 
	\end{equation}
	It follows that 
	\begin{equation*}
		\begin{aligned}
			& \ \alpha_{\beps}(\bx^k,\lambda^k)\\
			= & \ \sum_{i=1}^n | ( \bar y_i - x_i^k)x_i^k -  \lambda^k p (x_i^k+\epsilon_i)^{p-1}x_i^k|\\
			= & \ \sum_{i=1}^n | ( \bar y_i - x_i^k)x_i^k - \lambda^k p (x_i^{k-1}+\epsilon_i)^{p-1}x_i^{k} + \lambda^k p x_i^k[ (x_i^{k-1}+\epsilon_i)^{p-1} \!-\!(x_i^k+\epsilon_i)^{p-1}]|\\
			= & \ \lambda^k p\sum_{i=1}^n   x_i^k | (x_i^{k-1}+\epsilon_i)^{p-1} -    (x_i^k+\epsilon_i)^{p-1}|\\
			\leq & \ \lambda^k p (1-p)\sum_{i=1}^n   x_i^k (\tilde x_i^{k-1}+\epsilon_i)^{p-2} | x_i^{k-1} - x_i^k|\\
			\le &  \ \lambda^k p (1-p)\sum_{i=1}^n  \bar y_i  \epsilon_i^{p-2} | x_i^{k-1} - x_i^k|\\
			\le & \frac{\Vert\bar{\bm{y}}\Vert_{1}\Vert\bar{\bm{y}}\Vert_{2}}{(\Vert\bar{\bm{y}}\Vert_{\infty} + \Vert\bm{\epsilon}\Vert_{\infty})^{p-1}}  (1-p) \|\beps^{p-2}\|_{2} \| \bx^{k-1} - \bx^k\|_2,
		\end{aligned}
	\end{equation*}
	where the third  equality is by the optimality conditions~\eqref{eq: subproblem_kkt_real}, the first  inequality is by Lagrange's mean value theorem with 
	$\tilde x_i^{k-1}$ between $x_i^k$ and $x_i^{k-1}$, the second  inequality is by Lemma \ref{lemma.primal} (i) and 
	the last inequality is by~\eqref{lambda.bound.1} and~\eqref{abc}. 
	
	As for (iii), we have from Lemma \ref{lemma.primal} (iii) that 
	\begin{equation*}
		\begin{aligned}
			\vert  \|\bm{x}^k+\bm{\epsilon}\|_p^p - \gamma\vert = &\  \left| \sum_{i=1}^{n} w_i^k(x^{k+1}_i - x_i^k) \right|    \leq \sum_{i=1}^{n}\vert  w_i^k(x^{k+1}_i - x_i^k)\vert \\
			\leq & \ 
			p \sum_{i=1}^{n} \vert\epsilon_i^{p-1}(x^{k+1}_i - x_i^k)\vert 
			\le p \|\beps^{p-1}\|_2 \|\bx^{k+1}-\bm{x}^k\|_2,
		\end{aligned}
	\end{equation*}
	where the second inequality is true because $(\cdot)^{p-1}$ is nonincreasing and the last inequality is by~\eqref{abc}. This completes the proof. 
\end{proof}
\subsection{Proof of Lemma \ref{Lemma: the maximum iteration}}\label{app_I}
\begin{proof}
	By Lemma \ref{alphabeta}, at the $k$th iteration (note that this is the last iteration running in the algorithm), it follows that
	\begin{equation}\label{eq: alpha_bound}
		\begin{aligned}
			(\frac{\varepsilon}{2})^{2} \le \ \alpha_{\beps}^{2}(\bm{x}^{k}, \lambda^{k}) &=\ \kappa_3^2 \Vert\bar{\bm{y}}\Vert_{2}^{2}  (1-p)^{2} \|\beps^{p-2}\|_{2}^{2} \| \bx^{k-1} - \bx^k\|_{2}^{2}\\
			&\le  \ \kappa_3^2 \Vert\bar{\bm{y}}\Vert_{2}^{2}  (1-p)^{2} \|\beps^{p-2}\|_{2}^{2}   (\|\bm{x}^{k-1}-\bar{\bm{y}}\|_2^2 - \|\bm{x}^{k}-\bar{\bm{y}}\|_2^2 ), \\
		\end{aligned}
	\end{equation}
	where the second inequality holds owing to~\eqref{eq: sum1}. Similarly,
	\begin{equation}\label{eq: beta_bound}
		\begin{aligned}
			(\frac{\varepsilon}{2})^{2}\le \ \beta_{\beps}^{2}(\bm{x}^{k})  &=\ p^{2} \|\beps^{p-1}\|_{2}^{2} \|\bx^{k+1}-\bm{x}^k\|_{2}^{2}\\
			&\le  \ p^{2} \|\beps^{p-1}\|_{2}^{2} (\| \bm{x}^k-\bar{\bm{y}} \|_2^2 - \| \bm{x}^{k+1} -\bar{\bm{y}} \|_2^2 ). \\
		\end{aligned}
	\end{equation}
	\noindent  Reindexing each term by $t$ and summing up both sides of \eqref{eq: alpha_bound} from $t=1$ to $k$,  we have
	\begin{equation*}
		\begin{aligned}
			(\frac{\varepsilon}{2})^{2}k \le  \ \sum_{t=1}^k \alpha_{\beps}^{2}(\bm{x}^{t},\lambda^{t}) &\leq \kappa_3^2 \Vert\bar{\bm{y}}\Vert_{2}^{2}  (1-p)^{2} \|\beps^{p-2}\|_{2}^{2}   (\sum_{t=1}^k\|\bm{x}^{k-1}-\bar{\bm{y}}\|_2^2 - \|\bm{x}^{k}-\bar{\bm{y}}\|_2^2 )\\
			&=\ \kappa_3^2 \Vert\bar{\bm{y}}\Vert_{2}^{2}   (1-p)^{2} \|\beps^{p-2}\|_{2}^{2}  (\|\bm{x}^0 - \bar{\bm{y}}\|_{2}^{2} - \|\bm{x}^{k} - \bar{\bm{y}}\|_2^2)\\
			& \le  \ \kappa_3^2 \Vert\bar{\bm{y}}\Vert_{2}^{2} (1-p)^{2} \|\beps^{p-2}\|_{2}^{2}  (\|\bm{x}^0 - \bar{\bm{y}}\|_{2}^{2} - \|\bm{x}^* - \bar{\bm{y}}\|_2^2).
		\end{aligned}
	\end{equation*}
	We then conclude 
	\begin{equation}\label{eq: bound4.1}
		k_{\alpha} \leq     4(1-p)^{2}  \kappa_3^2 \Vert\bar{\bm{y}}\Vert_{2}^{2}   \| \bm{\epsilon}^{p-2}\|_2^{2}(\|\bm{x}^0 - \bar{\bm{y}}\|_{2}^{2} - \|\bm{x}^* - \bar{\bm{y}}\|_2^2)   / \varepsilon^{2}.
	\end{equation}	
	Using similar proof techniques for $\beta(\bm{x}^{k})$, we have
	\begin{equation}\label{eq: bound4.2}
		k_{\beta} \leq   4p^{2} \|\beps^{p-1}\|_{2}^{2}  (\|\bm{x}^0 - \bar{\bm{y}}\|_{2}^{2} - \|\bm{x}^* - \bar{\bm{y}}\|_2^2) /  \varepsilon^{2}.
	\end{equation}
	Combining \eqref{eq: bound4.1} and \eqref{eq: bound4.2}, we therefore have
	\begin{equation*}
		k \leq \max(k_{\alpha}, k_{\beta}).
	\end{equation*}
	This completes the proof.
\end{proof}

\section{Root-searching procedure description}\label{app_RS} The implementation of RS follows the details suggested in \cite{chen2019outlier}. In particular, for solving these nonlinear equations, we directly call the built-in~\textit{optimize.newton} routine in the open-source software SciPy~\cite{virtanen2020scipy} and use its default values for all parameters (e.g., the maximum number of iterations).  We terminate the bisection method in the exterior loop whenever the maximum number of iterations (IterMax = $10^{3}$) is exceeded or  the interval becomes too narrow ($\lambda_{\textrm{high}} - \lambda_{\textrm{low}} < 10^{-10}$). In particular, we set initial $\lambda_{\textrm{low}} = 10^{-15}$ and $\lambda_{\textrm{high}} = \tfrac{\|\bar{\bm{y}}\|_{\infty}^{2-p}}{p(1-p)(\tfrac{1}{1-p} + 1)^{2-p}}$ suggested by~\cite{chen2019outlier}. Moreover, an approximate solution $\bm{x}$ is returned if $\frac{1}{n} | \|\bx\|_p^p - \gamma | < 10^{-8}$ holds.

\begin{algorithm}[H]
	\caption{Projection onto the $\ell_{p}$ ball using RS method}
	\label{alo: RS}
	\begin{algorithmic}[1]
		\STATE \textbf{Input}: $\bm{y}$, $p$, $\gamma$, \textrm{tol} and $\textrm{IterMax}$. Let $\bar{\bm{y}} = |\bm{y}|$. 
		\STATE \textbf{Initialization}: Choose $\lambda_{\textrm{low}} = 2.22\times10^{-16}$ and  $\lambda_{\textrm{high}} = \tfrac{(\bar{y}_{i}^{\textrm{max}})^{2-p}}{p(1-p)(\tfrac{1}{1-p} +1)^{2-p}}$. Set  $\bm{x} = \bm{0}_{n}$ and $k=0$.
		\WHILE{$\lambda_{\textrm{high}} - \lambda_{\textrm{low}} >= \textrm{tol}$~$\textrm{and}$~ $k <= \textrm{IterMax}$}
		\STATE $\lambda^{k} = (\lambda_{\textrm{low}} + \lambda_{\textrm{high}})/2$.
		\STATE (Solve $n$ nonlinear equations) Use Newton's method to solve
		\begin{equation}\label{eq: optimality}
			x_{i}^{k} - \bar{y}_{i} + p \lambda^{k}(x_{i}^{k})^{p-1} = 0,\quad\forall i\in[n]
		\end{equation}
		with the initial point $x_{\textrm{ini}}\sim U[\tfrac{2-2p}{2-p}\bar{y}_{i}, \bar{y}_{i}]$ is used in Newton's method.
		\IF{Newton's method fails to output a solution}
		\STATE $x_{i}^{k} = 0$
		\ENDIF
		\STATE Obtain $\bm{x}^{k} = [x_{1}^{k},~x_{2}^{k},~\ldots, ~x_{n}^{k}]^{\top}$.
		\IF {\eqref{eq: practical_termination} is met}
		\STATE $\lambda^{*} = \lambda^{k}$.
		\ELSIF {$\|\bm{x}^{k}\|_{p}^{p} > \gamma$}
		\STATE {$\lambda_{\textrm{low}} = \lambda^{k}$.}
		\ELSIF {$\|\bm{x}^{k}\|_{p}^{p} < \gamma$}
		\STATE {$\lambda_{\textrm{high}} = \lambda^{k}$.}
		\ENDIF
		\STATE $k \gets k+1$.
		\ENDWHILE
	\end{algorithmic}
\end{algorithm}

\bibliographystyle{ieeetr}
\bibliography{references}

\end{document}